\documentclass[12pt]{amsart}
\usepackage[margin=45pt]{geometry}

\usepackage[foot]{amsaddr}
\usepackage{hyperref}\hypersetup{
    colorlinks=true,
    linkcolor=purple,
    filecolor=blue,      
    urlcolor=cyan,
    }

\usepackage{dsfont}
\usepackage{mathrsfs} 
\usepackage{amssymb, changes}
\usepackage[english]{babel}
\usepackage{tcolorbox}
\usepackage{xcolor}
\usepackage{cleveref}
\usepackage[dvipsnames]{xcolor}
\usepackage{bbm}
\usepackage{tikz}
\usetikzlibrary{arrows.meta}
\usepackage{booktabs}
\usetikzlibrary{positioning}
\usetikzlibrary{arrows.meta,decorations.markings}

\definecolor{lime}{HTML}{A6CE39}
\DeclareRobustCommand{\orcidicon}{%
	\begin{tikzpicture}
	\draw[lime, fill=lime] (0,0) 
	circle [radius=0.16] 
	node[white] {{\sffamily \tiny ID}};
	\draw[white, fill=white] (-0.0625,0.095) 
	circle [radius=0.007];
	\end{tikzpicture}
	\hspace{-2mm}
}
\foreach \x in {A, ..., Z}{%
	\expandafter\xdef\csname orcid\x\endcsname{\noexpand\href{https://orcid.org/\csname orcidauthor\x\endcsname}{\noexpand\orcidicon}}
}

\usepackage{times}
\usepackage{xcolor}
\newtheorem{theorem}{Theorem}[section]
\newtheorem{proposition}[theorem]{Proposition}
\newtheorem{lemma}[theorem]{Lemma}

\newtheorem{definition}{Definition}

\newcommand{\E}{\mathbb{E}}
\renewcommand{\P}{\mathbb{P}}

\def\Tcal{\mathcal T}

\def\Pcal{{\mathcal P}}
\newcommand{\X}{\mathbf{X }}
\newcommand{\C}{\mathbb{C}}

\newcommand{\Z}{\mathbb{Z}}
\newcommand{\N}{\mathbb{N}}

\newcommand{\rmd}{\,\mathrm{d}}

\def\1{\mathds{1}}
\def\1vec{\mathbf{1}}
\def\0{\mathbf{0}}

\begin{document}
\title{Multi-excited random walks on general trees}

\author[]{Jack Mills\orcidB{}}
\address[J. M.]{School of Mathematics, Monash University, 9 Rainforest Walk, Clayton 3800, Victoria, Australia}
\email{jack.mills1@monash.edu}
\author[]{Tuan-Minh Nguyen\orcidA{}}
\address[T.-M. N.]{School of Mathematics, Monash University, 9 Rainforest Walk, Clayton 3800, Victoria, Australia}
\email{tuanminh.nguyen@monash.edu}

\begin{abstract}
We study multi-excited random walks on infinite locally finite trees. The model is described by placing a stack of ``cookies'' at each vertex, with one cookie consumed on each successive visit. As long as cookies remain, the walk is in an excited regime and makes nearest-neighbour steps according to a prescribed sequence of biases. Once the stack is exhausted, the walk is in a non-excited regime and moves uniformly among the neighbouring vertices. The model is closely connected to foraging ecology, where repeated visits to a location can deplete local resources and thereby influence the attractiveness of that location for future visits. 
We prove that the process exhibits a sharp phase transition between recurrence and transience. The critical threshold is determined by the branching-ruin number of the tree, which is equal to the Hausdorff dimension of the tree's boundary under a suitable metric. Our result substantially extends Zerner's recurrence-transience criterion for multi-excited random walks on $\mathbb Z$ [Probab. Theory Related Fields {\bf 133} (2005)] to general trees.

\end{abstract}

\keywords{branching-ruin number, phase transition, excited random walks, random walks on trees, trees with polynomial growth} 
\subjclass{60K35, 60K37, 82D30}
\maketitle
\tableofcontents

\section{Introduction}\label{sec:intro}
Excited random walks are random walks with memory whose transition probabilities depend on the number of previous visits to the current site. This dependence is described by a cookie environment, in which each site initially carries a stack of cookies prescribing the transition probabilities on successive visits. The cookies can be viewed as local rewards that are depleted through repeated visits. While cookies remain at the current site, the walker is in the excited regime and consumes the next cookie before making a step according to its prescribed bias. Once the stack at that site is exhausted, the walker enters the non-excited regime there and chooses each neighbour with equal probability on every subsequent visit. This mechanism has a natural connection with foraging ecology, where visits to a location deplete local resources and may alter subsequent movement \cite{Campos2021,BenichouRedner2014,Bhat2017}. On trees, the behaviour of excited random walks is influenced by both the excitation and the branching structure. On regular trees, multi-excited random walks (with no bias in the non-excited regime) are transient \cite[Theorem~1.2(b)]{BS2009}. On trees with subexponential growth, however, recurrence and transience can both occur, depending on the bias parameters and the geometry of the tree. In this paper, we study how the growth of the tree and the excitations determine the phase transition between recurrence and transience.

\subsection{Description of the model and main result}
Let $\mathcal T=(V,E)$ be a locally finite infinite tree rooted at $\rho$, and let $\vec{E}=\{(u,v) : \{u,v\}\in E\}$ be the set of all induced oriented edges. For two adjacent vertices $u$ and $v$,
we write $u\sim v$. For a vertex $v\neq \rho$, we denote its parent by $v^{-1}$.
For $n\in\mathbb N$, let $[n]=\{1,2,\dots,n\}$.
Fix a natural number $M\ge1$ and a collection of positive real numbers
$\boldsymbol\lambda=(\lambda_v^{(j)})_{v\in V, j\ge1}$. In the setting considered below, we will assume that $\lambda_{v}^{(j)}=1$ for all $j\ge M+1$ and at the root we set
$\lambda_\rho^{(j)}=1$ for all
$j\ge1$. Let $p_{\boldsymbol{\lambda}}: \vec{E}\times \N\to (0,1]$ be the function defined as follows:
\begin{itemize}
    \item For each $u\sim \rho$ and $j\ge 1$,  $p_{\boldsymbol{\lambda}}(\rho,u,j)=1/\deg(\rho)$;  
    \item For $j\ge 1$ and $v\in V\setminus\{\rho\}$, $$p_{\boldsymbol{\lambda}}(v,v^{-1},j)=\frac{\lambda_v^{(j)}}{\lambda_v^{(j)}+\deg(v)-1} \text{ and } p_{\boldsymbol{\lambda}}(v,u,j)=\frac{1}{\lambda_v^{(j)}+\deg(v)-1},$$  for each child $u$ of $v$.
\end{itemize}
We consider a discrete-time nearest-neighbour random process $\X=(X_n)_{n\ge0}$ on $\Tcal$ with $X_0=\rho$,  whose conditional transition law is given by
\begin{equation}\label{P.quenched}
\P_{\boldsymbol\lambda}(X_{n+1}=y\mid X_0,\dots,X_n) =\mathds{1}_{\{y\sim X_n\}}\,p_{\boldsymbol\lambda}(X_n,y,\mathsf{L}_n(X_n)),
\end{equation}
where
$$
\mathsf{L}_n(x):=\sum_{k=0}^n \mathds 1_{\{X_k=x\}}
$$
is the number of visits to $x$ up to time $n$. In other words, on the $j$-th visit to a vertex $v\neq \rho$, the walk performs a
nearest-neighbour step with \textbf{bias} toward the parent $v^{-1}$, where the bias parameter
$\lambda_v^{(j)}$ is the ratio between the transition probability to the parent $v^{-1}$ and
the transition probability to any child $u$, i.e.,
$$
\lambda_v^{(j)} = \frac{p_{\boldsymbol\lambda}(v,v^{-1},j)}{p_{\boldsymbol\lambda}(v,u,j)}.
$$

The model above admits a natural interpretation in terms of a cookie environment on
$\mathcal T$, where each vertex carries a stack of $M$ cookies. During the first $M$
visits to a vertex $v$, the walk is in an excited regime and, on the $j$-th visit,
moves according to the bias parameter $\lambda_v^{(j)}$. After all cookies at $v$
have been consumed, the walk enters the non-excited regime and performs a simple
symmetric step to one of the neighbours of $v$ (equivalently, $\lambda_v^{(j)}=1$ for $j\ge M+1$ and $v\in V$).

This process is referred to in the literature as a \textbf{multi-excited random walk}
(MERW), and we denote it
by ${\rm MERW}(\boldsymbol\lambda)$. We say that the process $\X$ is \begin{itemize}
    \item \textbf{recurrent} if the process visits each vertex in $\Tcal$ infinitely often,
    \item \textbf{transient} if the process visits each vertex in $\Tcal$ finitely often.
\end{itemize}

\begin{figure}[h]
\centering
\begin{tikzpicture}[
  x=1cm,y=1cm,
  tree edge/.style={gray!70,line width=.65pt},
  tree vertex/.style={circle,draw=black,fill=white,
    inner sep=0pt,minimum size=4pt},
  cookie/.style={circle,draw=orange!80!black,fill=orange!30,
    inner sep=0pt,minimum size=4mm},
  jump/.style={-{Latex[length=2mm]},blue!70!black,line width=1pt},
  walker/.style={circle,fill=blue!70!black,draw=white,
    inner sep=0pt,minimum size=5pt}
]
\newcommand{\ERWtree}{%
  \draw[tree edge]
    (0,2.5)--(0,3.15)
    (0,2.5)--(-1.1,1.45) (0,2.5)--(1.1,1.45)
    (-1.1,1.45)--(-1.7,.3) (-1.1,1.45)--(-.5,.3)
    (1.1,1.45)--(.35,.3) (1.1,1.45)--(1.8,.3);
  \foreach \x/\y in {0/2.5,-1.1/1.45,1.1/1.45,
    -1.7/.3,-.5/.3,.35/.3,1.8/.3}
    \node[tree vertex] at (\x,\y) {};
  \node[left] at (-.04,2.5) {$u$};
  \node[right] at (1.13,1.4) {$v$};
}
\begin{scope}[shift={(0,0)}]
  \ERWtree
  \foreach \x/\y in {.33/2.63,-1.42/1.58,1.66/1.07,
    -1.94/.40,-.20/.40,.35/.74,2.10/.40} {
    \node[cookie] at (\x,\y) {};
    \node[cookie] at (\x,{\y+.27}) {};
  }
  \node[above left=-5pt] at (0,2.95) {$\lambda_u^{(1)}$};
  \node[left=2pt] at (-.55,1.98) {$1$};
  \draw[jump] (.04,2.46)--(1.06,1.49)
    node[pos=.50,above right=-2pt] {$1$};
  \node[walker] at (0,2.5) {};
  \node at (.1,-.67) {$u\to v$};
\end{scope}
\begin{scope}[shift={(5.2,0)}]
  \ERWtree
  \foreach \x/\y in {-1.42/1.58,1.66/1.07,
    -1.94/.40,-.20/.40,.35/.74,2.10/.40} {
    \node[cookie] at (\x,\y) {};
    \node[cookie] at (\x,{\y+.27}) {};
  }
  \node[cookie] at (.33,2.63) {};
  \node[above right=2pt] at (.58,.56) {$1$};
  \node[left=2pt] at (1.75,.55) {$1$};
  \draw[jump] (1.08,1.48)--(.04,2.46)
    node[pos=.3,above right=-2pt] {$\lambda_v^{(1)}$};
  \node[walker] at (1.1,1.45) {};
  \node at (.1,-.67) {$v\to u$};
\end{scope}
\begin{scope}[shift={(10.4,0)}]
  \ERWtree
  \foreach \x/\y in {-1.42/1.58,-1.94/.40,-.20/.40,
    .35/.74,2.10/.40} {
    \node[cookie] at (\x,\y) {};
    \node[cookie] at (\x,{\y+.27}) {};
  }
  \node[cookie] at (.33,2.63) {};
  \node[cookie] at (1.3,1.8) {};
  \node[above left=-5pt] at (0,2.95) {$\lambda_u^{(2)}$};
  \node[left=2pt] at (-.55,1.98) {$1$};
  \draw[jump] (.04,2.46)--(1.06,1.49)
    node[pos=.50,above right=-1pt] {$1$};
  \node[walker] at (0,2.5) {};
  \node at (.1,-.67) {$u\to v$};
\end{scope}
\end{tikzpicture}
\caption{Three successive steps $u\to v\to u\to v$ of a walk started at $u$. Each vertex initially has a stack of $2$ cookies. }
\label{fig:erw}
\end{figure}

The concept of excited random walks with a single cookie at each vertex on $\Z^d$ was first introduced by Benjamini and Wilson \cite{BW2003} and later extended to multi-excited random walks by Zerner \cite{Z2005}. In recent years, significant attention has been devoted to the study of one-dimensional nearest-neighbor excited random walks, leading to remarkable phase transition results for the asymptotic behavior of the model, including criteria for recurrence/transience \cite{ABO, Z2005} and non-ballisticity/ballisticity \cite{BasS1, BasS2}, as well as the characterization of the limit distribution in specific regimes — see \cite{DK2012, KM2011, KZ2013, P2012, KP2017, KMP2022} for more details. One-dimensional excited random walks with non-nearest-neighbor steps on $\Z$ were studied recently in \cite{DP2017, CHN2021, N2022}. For excited random walks on $\mathbb{Z}^d$, Benjamini and Wilson \cite{BW2003} proved positive speed for $d\ge 4$, while Kozma \cite{Kozma2003, Kozma2005} established ballisticity in dimensions $2$ and $3$. Zerner \cite{Z2006} gave criteria for recurrence and transience in $\mathbb{Z}^d$ and strips. B\'erard and Ram\'irez \cite{BR2007} subsequently proved a ballistic law of large numbers and a central limit theorem for all $d\ge 2$. These results were extended to more general multidimensional excited walks by Menshikov, Popov, Ram\'irez, and Vachkovskaia \cite{Menshikov2012}; see also \cite{Hofstad2010, Pham2015} for further results on the speed, including monotonicity and regularity in higher dimensions. Recent results on shape theorems for random walks excited to the center can be found in \cite{N2026b} and \cite{BNSP2026}. For results related to other variations of excited random walks on $\mathbb{Z}^d$, see \cite{ABK2008, AHR2023}. For a literature review, we refer the reader to \cite{KZ2013}.

Excited random walks on regular trees were first considered by Volkov in \cite{V2003}. In \cite{BS2009}, Basdevant and Singh established a phase transition between transience and recurrence for multi-excited random walks on regular trees in the case where the process has a bias in the non-excited regime. In contrast, for the original case in which there is no bias in the non-excited regime, they showed that the process is always transient on regular trees. This sharp dichotomy naturally suggests that the process should exhibit a recurrence-transience phase transition on trees with sub-exponential growth. Indeed, on trees with polynomial growth, sharp phase transitions were established for both $M$-digging random walks in \cite{CHK2019} and once-excited random walks in \cite{LN2026}. Both models are special cases of multi-excited random walks. The behaviours of these models depend on their ranges, which can be studied using gambler's ruin coupling techniques as in \cite{CKS2020}. Such techniques, however, are not applicable to general multi-excited random walks, whose behaviours depend more strongly on their past trajectories. The present paper extends the results in \cite{BS2009, CHK2019, LN2026} and completes the recurrence-transience phase transition picture for multi-excited random walks on general trees. Our approach differs significantly from those used in \cite{BS2009, CHK2019, CKS2020, LN2026} (see the strategies of proof in Section \ref{sec:sketch} below).

For each edge $e=\{v^{-1},v\}\in E$, let $|e|:=|v|$ denote the distance from $v$ to the root $\rho$, that is, the number of edges in the unique shortest path from $v$ to $\rho$. A \textbf{cutset} in $\mathcal T$ is a minimal set $\pi$ of edges that separates the root from infinity. Equivalently, every infinite self-avoiding path $(v_i)_{i\ge0}$ with $v_0=\rho$ intersects $\pi$ in exactly one edge, i.e., there exists a unique index $i$ such that $\{v_i,v_{i+1}\}\in\pi$. We write $\Pi$ for the collection of all cutsets in $\mathcal T$.
To quantify polynomial growth of trees, we use the \textbf{branching-ruin number}, introduced in \cite{CKS2020}:
\begin{equation}\label{brr2}
\mathrm{br}_r(\mathcal T)
=
\sup\left\{\gamma>0:\inf_{\pi\in\Pi}\sum_{e\in\pi}|e|^{-\gamma}>0\right\}.
\end{equation}
Let $\partial\mathcal T$ denote the \textbf{boundary} of $\mathcal T$, namely the set of all infinite self-avoiding paths starting from the root. For $\xi,\eta\in\partial\mathcal T$, let $e$ denote their last common edge. We define
\begin{equation}\label{def.d}
d(\xi,\eta):=\frac{1}{|e|},
\end{equation}
with the convention that $d(\xi,\eta)=1$ if $\xi$ and $\eta$ have no common edge. It is shown in Section 3.3 of \cite{CKS2020} that $\mathrm{br}_r(\mathcal T)$ coincides with the \textbf{Hausdorff dimension} of $\partial\mathcal T$ with respect to the metric $d$. It is worth noting that the branching-ruin number has also been shown to serve as a critical parameter for several self-interacting random walks on trees with polynomial growth, including once-reinforced random walks \cite{CKS2020}, random walks in random conductances \cite{CHK2019}, and true self-avoiding walks \cite{N2026}.     

In this paper, we establish a phase transition between recurrence and transience for the multi-excited random walk on the tree $\mathcal{T}$ with respect to the branching-ruin number $\text{br}_r(\mathcal{T})$. \vspace{5pt}

\begin{theorem}\label{main.thm}
    Assume that 
    $$\lambda_v^{(j)}=1+\alpha_j \deg(v)$$
    where $\alpha_j\in [0,\infty)$ for $1\le j\le M$ and $\alpha_j=0$ for $j\ge M+1$.
    Let 
    $$D:=\sum_{j=1}^M\frac{\alpha_j}{1+\alpha_j}.$$
    Then, the process ${\rm MERW}(\boldsymbol{\lambda})$ is a.s. transient when ${\rm br}_r(\Tcal)> D+1$ and it is a.s. recurrent when ${\rm br}_r(\Tcal)< D+1$. 
\end{theorem}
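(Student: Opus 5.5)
The plan is to reduce the recurrence–transience question to a percolation-type problem on $\Tcal$ driven by edge-local excursion statistics, and then compare with the cutset sums in \eqref{brr2}. The key object is the law of the walk restricted to a single ray. Fix $v\ne\rho$ and look at the walk only on the geodesic $\rho=v_0,v_1,\dots,v_n=v$, ignoring excursions into side branches. At $v_i$, on the $j$-th visit, the walk moves to $v_{i-1}$ with probability $\lambda/(\lambda+\deg-1)$, to $v_{i+1}$ with probability $1/(\lambda+\deg-1)$, and to other children otherwise. An excursion into another child returns to $v_i$ a.s. in the recurrent regime; in the transient regime it is handled by monotonicity/killing. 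Conditioning on leaving the set $\{v_{i-1},v_{i+1}\}$-directions, each cookie visit effectively produces a biased coin: with $\lambda=1+\alpha_j\deg(v_i)$, the probability of going to the parent before the designated child is $\frac{1+\alpha_j\deg}{2+\alpha_j\deg}$. However, the $\deg$-scaling means that with high probability the cookie gets consumed at $v_i$ while the walk goes to other children. The correct computation counts the number of times the walk goes up versus into $v_{i+1}$ across consumption of the $M$ cookies. Roughly, the walk tends to steps toward $v_{i-1}$ with the total expected "drift" contribution $\sum_j \frac{\alpha_j}{1+\alpha_j}=D$. This identifies, for the induced one-dimensional walk on the ray, an excited random walk in Zerner's sense with total drift $D$ pointing toward the root (a drift toward the parent).

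The first main step is to compute, via a branching-process (Ray–Knight) description of edge local times, the probability that the walk restricted to the ray reaches $v$ at level $n$ before returning to $\rho$. The target is a polynomial estimate
\[
\P_{\boldsymbol\lambda}(\text{$\X$ hits $v$ before returning to $\rho$}) \asymp n^{-(D+1)} \quad (n=|v|),
\]
up to slowly varying corrections that do not affect the exponent. For this I would set up the standard downcrossing Markov chain $Z_k$ = number of upcrossings of the $k$-th edge on the ray before the first return, which is a branching process with geometric offspring modified by the first $M$ cookies. Its diffusion approximation is a squared Bessel process of generalized dimension $2-2D$ (as in Kosygina–Zerner), and the probability of surviving $n$ generations from a bounded start decays like $n^{-(1+D)}$. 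This matches the single-excited case $\P\asymp n^{-1-\alpha/(1+\alpha)}$ known for once-excited walks from \cite{LN2026}.

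The second step turns this into the theorem via cutsets. For recurrence when ${\rm br}_r(\Tcal)<D+1$, choose $\gamma\in({\rm br}_r,D+1)$ and cutsets $\pi$ with $\sum_{e\in\pi}|e|^{-\gamma}$ arbitrarily small. A union bound over $e\in\pi$ then shows the probability that an excursion from $\rho$ crosses $\pi$ is small. This makes each excursion finite with probability one, and recurrence follows since the excursions are i.i.d.-dominated, with cookies only helping return. For transience when ${\rm br}_r>D+1$, use a Frostman measure $\mu$ on $\partial\Tcal$ with energy $\iint d(\xi,\eta)^{-\gamma'}$ finite, where $\gamma'>D+1$, and do a second-moment argument on $N_n=\sum_{|v|=n}\mu(v)\1\{v\text{ reached before return}\}/\P(\cdot)$. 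The main obstacle is the two-point correlation bound: for $v,w$ splitting at level $k$, I need
\[
\P(\text{both reached}) \lesssim \P(v)\P(w)\,k^{D+1},
\]
which requires a quasi-independence property of the excursion after the branching point. I would get it by a strong Markov argument at the first visit to the branch vertex. After that visit, the cookies in the two subtrees are untouched, so the two ray processes are conditionally independent given the local time at the branch point, and that local time has controlled moments. The $\mu$-energy condition then yields $\E N_n^2\le C(\E N_n)^2$. Paley–Zygmund gives a positive probability of escape, which upgrades to a.s. transience by a 0–1 argument using that cookies are consumed only finitely often near the root.
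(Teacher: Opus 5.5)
Your architecture matches the paper's: a one-ray ruin exponent $D+1$, quasi-independence from conditioning on the crossings of the branching edge, a first-moment bound over cutsets, and a second-moment argument (Frostman plus Paley--Zygmund is essentially the proof of Proposition~\ref{lyons1989}(ii)), closed by a 0--1 law. The gap is that the estimates carrying all the weight are asserted, and the justification you give does not work.

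The induced walk on a ray is not a Zerner excited walk. Cookies spent on side steps are skipped, so each $v_i$ carries a random stack whose law depends on $\deg(v_i)$, and Kosygina--Zerner's diffusion analysis (identically distributed stacks) does not apply as cited. Your dimension is also off: the upcrossing chain has $\E[\Psi(z)]=z-D$ for $z>M$, i.e.\ generalized dimension $-2D$. The dimension $2-2D$ you quote would give survival exponent $D$, not $D+1$.

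More seriously, the two-point bound $\P(v,w\text{ both reached})\le C\,\P(v)\P(w)\,k^{D+1}$ needs $C$ independent of the ray and of the depths. A loss depending on $|v|$ rather than on $k$ makes Paley--Zygmund degenerate as $n\to\infty$. So you need three things:
- two-sided bounds $C^{-1}n^{-D-1}\le r_n\le Cn^{-D-1}$, uniform over rays with arbitrary, unbounded degrees;
- the generalized bound $\P(\text{reach depth } m\text{ before the } j\text{-th return})\le C\bigl(1\wedge (j/m)^{D+1}\bigr)$ for the branch processes;
- the moment bound $\E[(N_e/|e|)^{2D+2}\mid N_e\ge1]\le C$ for the crossings of the branch edge.

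A diffusion heuristic gives none of these. The missing idea is in Section~\ref{sec:backward}: pass to the backward (downcrossing) chain $Y^{(n)}$. From any state $m\ge M-1$ its transition has pgf $(2-s)^{-(m+1)}\prod_{j=1}^M(1-\beta_j(1-s))$ whatever the degree (Lemma~\ref{lem:F_pgf}). So it agrees there with a homogeneous chain $\widetilde Y$ whose $n$-step pgf is explicit (Lemma~\ref{lem.G}). The degree-dependent small states are sandwiched in stochastic order between $Q_2$ and $\widetilde Q$ (Lemma~\ref{lem:ruincomp}) and handled by a renewal argument. This yields \eqref{eq:ruin.uniform} and Proposition~\ref{lem:genruin} uniformly, and Proposition~\ref{prop.moment-drift} gives the moment bound.

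Second, your recurrence step fails as written. The union bound only controls the first excursion, which runs in a fresh environment. Later excursions see depleted stacks, and since the cookies are what push toward $\rho$, depletion removes drift toward the root. So ``cookies only helping return'' gives the domination in the wrong direction. Nor can you compare with a cookie-free walk: simple random walk is already transient when ${\rm br}_r(\Tcal)>1$.

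The repair is to bound the probability that $\X$ crosses a finite cutset $\pi$ before its $j$-th return to $\rho$ by $\sum_{e\in\pi}r^{(j)}_{e^+}\le Cj^{D+1}\sum_{e\in\pi}|e|^{-D-1}$. This uses the restriction principle and Proposition~\ref{lem:genruin}; in your language, it is the upcrossing chain started from $j$ instead of $1$. An infinite excursion a.s.\ reaches arbitrary depth, by the ellipticity argument of Lemma~\ref{lem:zero.one}. Taking the infimum over $\pi$ then shows that $\rho$ is revisited infinitely often a.s.

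Finally, ``handled by monotonicity/killing'' should be replaced by the extension construction of Section~\ref{sec:ext}. The conditional independence of the two branches given the crossings at the branch point holds exactly for the extensions, not for $\X$. You then need Lemma~\ref{per2.lem} to turn an infinite ruin cluster with positive probability into $\P(\tau_\rho^+=\infty)>0$.
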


Our work generalises the results for once-excited random walks on general trees in \cite{LN2026}, and for multi-excited random walks on $\Z$ in \cite{Z2005}.

Excited random walks have a strong conceptual connection with foraging ecology, where an animal's visits to a location may deplete local resources or alter the value of subsequent visits \cite{Campos2021,BenichouRedner2014,Bhat2017}. This connection was mentioned in \cite{Campos2021}, where excited random walks are used to model feeding and escape dynamics in a progressively depleted environment and to investigate how local resource consumption shapes search strategies. More broadly, models of animal movement show that spatial memory and information about previously visited resource patches can strongly influence foraging efficiency, revisit patterns, and the emergence of home ranges \cite{Bracis2015,VanMoorter2009,Fronhofer2013,GetzSaltz2008,BartumeusLevin2008}. Thus, the visit-dependent transition mechanism of multi-excited random walks provides a natural and mathematically tractable framework for representing feedback between a forager's movement history and the changing attractiveness of its environment.

\textbf{AI disclosure.} All ideas, concepts, and mathematical proofs presented in this paper were developed and written entirely by the authors. Microsoft 365 Copilot (GPT-5.6) was used only for proofreading and for verifying computational results.

\subsection{Structure of the paper and strategies of the proof}\label{sec:sketch}

The rest of the paper is devoted to proving Theorem~\ref{main.thm}. In Section~\ref{sec:strong-const}, we describe a strong construction of $\X=(X_n)_{n\ge0}$ using independent exponential random variables. For each vertex $v\ne\rho$, let $\mathcal P_v$ be the unique shortest path connecting $\rho$ to $v$. The restriction of $\X$ to $\mathcal P_v$ records its positions on this path, ignoring movements outside $\mathcal P_v$ and removing consecutive repetitions of the same vertex. If $\X$ is transient, $\mathcal P_v$ is visited only finitely many times, and this restriction process contains only finitely many recorded positions. Using the same exponential random variables, we construct the extension process $\X^{(v)}=(X_k^{(v)})_{k\ge0}$ on $\mathcal P_v$, with $X_k^{(v)}$ defined for every $k\ge0$, so that the extension agrees with the restriction for as long as the restriction is defined. In particular, on the event that $\X$ visits $\mathcal P_v$ infinitely many times, the restriction agrees with $\X^{(v)}$ at every step (see In Proposition~\ref{prop.res}).

In Section~\ref{sec:backward}, we fix a path $\mathcal{P}_{v_n}=(v_k)_{0\le k\le n}$, with $v_0=\rho$, and consider the inhomogeneous Markov chain $Y^{(n)}=(Y_k^{(n)})_{0\le k\le n-1}$ describing the numbers of backward steps made by $\X^{(v_n)}$ before its first visit to $v_n$. This representation expresses the ruin probability, namely the probability that $\X^{(v_n)}$ reaches $v_n$ before returning to $\rho$, as the probability that $Y^{(n)}$, started at $0$, is at $0$ after $n-1$ transitions. To study this probability, we compare $Y^{(n)}$ with a homogeneous Markov chain $\widetilde Y$, whose transition law depends only on $M$ and the bias parameters $(\alpha_k)_{1\le k\le M}$. We compute the probability generating functions of $\widetilde Y_n$ for each initial state and obtain their asymptotics in Lemmas~\ref{lem.G} and~\ref{lem:G.asym}. We then use generating function identities and stochastic dominations techniques to prove the ruin probability asymptotics in Proposition~\ref{thm:ruin_prob}. In Proposition~\ref{lem:genruin}, we also bound the generalised ruin probability that $\X^{(v_n)}$ reaches $v_n$ before its $j$-th return to $\rho$, uniformly over $j\ge1$ and finite paths. In Section~\ref{sec:forward}, we consider the inhomogeneous Markov chain $Z^{(n)}$ describing the numbers of forward steps along the path $\mathcal{P}_{v_n}$ made by $\X^{(v_n)}$ before its first return to $\rho$. We prove a moment bound for $Z^{(n)}$ in Proposition~\ref{prop.moment-drift}.

In Section~\ref{sec:perc}, we recall general results on quasi-independent percolations and define the ruin percolation. An edge $\{v^{-1},v\}$ is open if $\X^{(v)}$ reaches $v$ before returning to $\rho$, and closed otherwise. The open cluster containing $\rho$ is the connected component of $\rho$ after all closed edges have been removed. By Lemma~\ref{per2.lem}, $\X$ is recurrent if cluster contains only finitely many vertices almost surely, and transient if this cluster contains infinitely many vertices. We prove that the ruin percolation is quasi-independent in the sense of Lyons \cite{Lyons1989}. We first use Proposition~\ref{lem:genruin} to estimate the conditional probability that two edges are open given their nearest common ancestor is open, as stated in Lemma~\ref{lem:branch_corr}. We then combine this estimate with Propositions~\ref{prop.moment-drift} and~\ref{thm:ruin_prob} to prove Proposition~\ref{prop:per}. We also prove a $0-1$ law for recurrence and transience. Finally, we apply the percolation criteria in Proposition~\ref{lyons1989} to prove Theorem~\ref{main.thm}. For recurrence, we use cutsets to prove that the open cluster containing $\rho$ has only finitely many vertices almost surely. For transience, we construct a non-zero flow of finite energy for the adapted conductances of the ruin percolation, and prove that this cluster contains infinitely many vertices with positive probability.

\begin{figure}[h]
\centering
\begin{tikzpicture}[
    scale=0.7,
    transform shape,
    result/.style={
        draw,
        rounded corners=2pt,
        align=center,
        text width=3cm,
        minimum height=1cm,
        inner sep=4pt,
        font=\normalsize
    },
    dependency/.style={
        -{Stealth[length=2mm]},
        line width=0.6pt,
        rounded corners=2pt
    }
]
\node[result] (G) at (0,3.6)
    {Lemma~\ref{lem.G}};
\node[result] (A) at (3.6,3.6)
    {Lemma~\ref{lem:G.asym}};
\node[result] (L) at (7.2,3.6)
    {Lemma~\ref{prop:first-hit}};
\node[result] (H) at (11,3.6)
    {Lemma~\ref{lem:ruinhomo}};
\node[result] (R) at (14.6,3.6)
    {Proposition~\ref{thm:ruin_prob}\\Ruin probability asymtotics};
\node[result] (Z) at (18.4,4.6)
    {Proposition~\ref{prop.moment-drift}\\Moment bounds for $Z^{(n)}$};
\node[result] (LY) at (22,4.6)
    {Proposition~\ref{lyons1989}\\Percolation criteria};
\node[result] (M) at (14.6,5.9)
    {Lemma~\ref{lem:monotone}\\Monotonicity of ruin probabilities};
\node[result] (E) at (11,1.8)
    {Lemma~\ref{lem:escape}};
\node[result] (C) at (14.6,1.8)
    {Lemma~\ref{lem:ruincomp}};
\node[result] (Q) at (18.4,1.8)
    {Proposition~\ref{prop:per}\\Quasi-independent percolation};
\node[result,fill=green!10,line width=1pt] (T) at (22,1.8)
    {Theorem~\ref{main.thm}};
\node[result] (F) at (0,-0.4)
    {Lemma~\ref{lem:F_pgf}};
\node[result] (S) at (3.6,-0.4)
    {Lemma~\ref{lem:small_state2}};
\node[result] (J) at (11,-0.4)
    {Proposition~\ref{lem:genruin}\\Generalised ruin probability};
\node[result] (B) at (14.6,-0.4)
    {Lemma~\ref{lem:branch_corr}};
\node[result] (P) at (14.6,-2.5)
    {Proposition~\ref{prop.res}\\Restriction principle};
\node[result] (O) at (18.4,-2.5)
    {Lemma~\ref{lem:zero.one}\\$0$--$1$ law};
\node[result] (W) at (22,-2.5)
    {Lemma~\ref{per2.lem}\\Connection to percolation};
\draw[dependency] (F.east) -- (S.west);
\draw[dependency] (G.east) -- (A.west);
\draw[dependency] (A.east) -- (L.west);
\draw[dependency] (L.east) -- (H.west);
\draw[dependency] (S.south)
    -- (3.6,-1.4)
    -- (-2.4,-1.4)
    -- (-2.4,4.8)
    -- (11,4.8)
    -- (H.north);
\draw[dependency] (S.east)
    --  (J.west);
\draw[dependency] (A.south)
    -- (3.6,2.7)
    -- (7.2,2.7)
    |- ([yshift=3mm]J.west);
\draw[dependency] (E.north) -- (H.south);
\draw[dependency] (E.south) -- (J.north);
\draw[dependency] (H.east) -- (R.west);
\draw[dependency] (M.south) -- (R.north);
\draw[dependency] (C.north) -- (R.south);
\draw[dependency] (J.east) -- (B.west);
\draw[dependency] (R.east)
    -- (16.4,3.6)
    |- ([yshift=3mm]Q.west);
\draw[dependency] (B.east)
    -- (16.4,-0.4)
    |- ([yshift=-3mm]Q.west);
\draw[dependency] (Z.south) -- (Q.north);
\draw[dependency] (P.east) -- (O.west);
\draw[dependency] (O.east) -- (W.west);
\draw[dependency] (P.north) -- (B.south);
\draw[dependency] (Q.east) -- (T.west);
\draw[dependency] (LY.south) -- (T.north);
\draw[dependency] (W.north) -- (T.south);
\end{tikzpicture}
\caption{Dependencies of the results used in the proof of Theorem~\ref{main.thm}.}
\end{figure}

We extend the percolation framework developed in \cite{CKS2020,LN2026} to multi-excited random walks through a new analysis of inhomogeneous Markov chains of backward steps. For the once-reinforced random walks and once-excited random walks studied in \cite{CKS2020,LN2026}, the transition probabilities depend on the set of traversed edges and the set of visited vertices, respectively. In both models, the ruin probabilities on a path can be computed explicitly via the classical gambler's ruin problem for Markovian random walks. For multi-excited random walks, however, the transition probabilities at a vertex depend strongly on the past trajectory, and therefore this approach no longer works. Our approach is built on the representation by Markov structures describing backward steps recently developed by the second author for true self-avoiding walks in \cite{N2026}. In the true self-avoiding walk model, the extension process to a finite path is itself a true self-avoiding walk, and the corresponding Markov chain describing backward steps is homogeneous. In the present model, cookies at a vertex of a path $\mathcal{P}_{v_n}$ may be consumed by steps to neighbours outside that path. Consequently, even when the original cookie environment is fixed, the extension process $\X^{(v_n)}$ defined on $\mathcal{P}_{v_n}$ is a  \textit{random walk in an inhomogeneous random cookie environment}, since cookies that prescribe steps outside $\mathcal{P}_{v_n}$ are omitted and vertices on this path may have different degrees. The transition laws of the Markov chain $Y^{(n)}$ describing backward steps therefore depend on the degrees of the vertices in the original tree, and this Markov chain is generally inhomogeneous. The main difficulty is to analyse this inhomogeneous Markov chain without assuming uniformly bounded vertex degrees. Our key observation is that there exists a homogeneous Markov chain $\widetilde Y$ whose transition law agrees with that of $Y^{(n)}$ whenever $Y^{(n)}$ is at least $M-1$,
independently of the degrees along the path $\mathcal{P}_{v_n}$. We exploit this property through stochastic domination and generating function methods to obtain exact ruin probability asymptotics with polynomial exponent $D+1$ and bounds that are uniform over all finite paths.

\section{Strong construction}\label{sec:strong-const}

In this section we first give a strong construction of the excited random walk $\X$ using a collection $\boldsymbol{\xi}$ of independent rate one exponential random variables; see Section~\ref{sec:rubin}. For each vertex $v\ne\rho$, let $\mathcal P_v$ be the unique self-avoiding path from the root $\rho$ to $v$. We next construct the extension process $\X^{(v)}=(X_k^{(v)})_{k\ge0}$ defined on $\mathcal P_v$ in Section~\ref{sec:ext}. This process is defined at every step such that it agrees with the restriction of $\X$ to $\mathcal P_v$ up to the last exit of $\X$ from $\mathcal P_v$. In particular, on the event that $\X$ visits $\mathcal P_v$ infinitely often, the two processes agree at every step. This construction will be used in Sections~\ref{sec:backward} and~\ref{sec:forward} to define Markov chains associated with the backward and forward steps of $\X^{(v)}$, respectively, and in Section~\ref{sec:perc} to define the ruin percolation.


\subsection{Rubin's construction}\label{sec:rubin}
Let $\boldsymbol{\xi}= \left(\xi(v,u,j) : v \in V, u \sim v,\, j\geq 1 \right)$ be a collection of independent rate one exponential random variables. We use
$\boldsymbol{\xi}$ to give a strong construction of the process $\mathbf X=(X_n)_{n\ge0}$ path wise. Recall that $\mathsf{L}_n(v) = \sum_{k=0}^n\mathds{1}_{\{X_k =  v\}}$ is the number of visits to vertex $v$ by time $n$.  Let 
\[\mathsf{C}_n(v,u) = \sum_{k=1}^n{\mathds{1}}_{\{X_{k-1} = v, X_k = u\}}\] 
be the number of crossings from $v$ to $u$ up to time $n$. For $v\neq \rho$, define the visit-dependent rates
$$
r_j(v,u) := \mathds 1_{\{u=v^{-1}\}}\lambda_v^{(j)} + \mathds 1_{\{u^{-1}=v\}} \quad\text{for } u\sim v,\, j\ge1.
$$
At the root we set
$$
r_j(\rho,u):=1\quad\text{for } u\sim \rho,\, j\ge1,
$$
which corresponds to the uniform transition rule from $\rho$.

We now define the walk recursively. Set $X_0 = \rho$. On the event $\{X_n = v, \mathsf{L}_n(v) = j\}$, with $1\leq j \le M$, the next position is given by
$$X_{n+1} =\arg\min_{u\sim v}\frac{\xi(v,u,j)}{r_j(v,u)}.$$
For $v\neq\rho$ and $1\le j\leq M$, we now verify that 
$$\P(X_{n+1} = v^{-1}\mid X_n =v, \mathsf{L}_n(v) = j) = \frac{\lambda_v^{(j)}}{\lambda_v^{(j)}+\deg(v)-1}.$$
Indeed, since each $\frac{\xi(v,u,j)}{r_j(v,u)}$ is an exponential random variable with rate ${r_j(v,u)}$, we have that 
\begin{align*}
    \P\left(\frac{\xi(v,v^{-1},j)}{r_j(v,v^{-1})} = \min_{u\sim v}\frac{\xi(v,u,j)}{r_j(v,u)}\right) = \frac{\lambda^{(j)}_v}{\lambda^{(j)}_v-1+\deg(v)},
\end{align*}
and for $w \neq v^{-1}$, $w\sim v$,
\begin{align*}
    \P\left(\frac{\xi(v,w,j)}{r_j(v,w)} = \min_{u\sim v}\frac{\xi(v,u,j)}{r_j(v,u)}\right) = \frac{1}{\lambda^{(j)}_v-1+\deg(v)}.
\end{align*}

For $k\ge 1$, let $$\tau_v^{(k)} := \inf\{n\geq 0: \mathsf{L}_n(v) = k\}$$ be the time of the $k$-th visit to $v$, with the convention $\inf\emptyset=\infty$. In particular, $\tau_{v}^{(M+1)}$ is the first time the random walker is at $v$ and there are no more cookies left. On the event $\{X_n = v, \mathsf{L}_n(v) = j\}$, $j\geq M+1$ the next position is given by
$$X_{n+1} = \arg\min_{u\sim v}\left\{\sum_{k=\mathsf{C}_{\tau_v^{(M+1)}}(v,u)}^{\mathsf{C}_n(v,u)}\xi(v,u,k+M+1)\right\}.$$
By the memoryless property of exponential random variables, we infer that, on the event $\{X_n = v, \mathsf{L}_n(v) = j\}$, with $j\geq M+1$, the process jumps from $v$ to any of its neighbours with uniform probability. 

Hence the process $\mathbf{X} = (X_n)_{n\in\N}$ constructed above is a ${\rm MERW}(\boldsymbol\lambda)$ with the law $\P_{\boldsymbol{\lambda}}$ given by \eqref{P.quenched}.

\subsection{Restrictions and extensions}\label{sec:ext}

Fix a vertex $v\in V\setminus\{\rho\}$. Recall that $\mathcal{P}_v$ denote the shortest path from $\rho$ to the $v$. Let 
\[\delta_n(\mathcal{P}_v) = \inf \left \{k\geq 0: \sum_{j=0}^k \mathds{1} _{\{X_j \in \mathcal{P}_v\}} =n+1\right\} \quad\text{for $n\ge 0$},
\]
with the convention $\inf\emptyset=\infty$, be the subsequence of times when \textbf{X} is on the path $\mathcal{P}_v$. 

Define $m_0 = 0$ and, when $m_k<\infty$, let
$$m_{k+1} = \inf\big\{j > m_k : \delta_j(\mathcal P_v)<\infty,\, X_{\delta_j(\mathcal{P}_v)} \neq X_{\delta_{m_k}(\mathcal{P}_v)}\big\}.$$
Set $m_{k+1}=\infty$ when $m_k=\infty$. Let $$K_v:=\sup\{k\ge 0: m_k<\infty\}.$$
Then $(X_{\delta_{m_k}(\mathcal{P}_v)})_{0\le k\le K_v}$ is the nearest neighbour random walk, which records the trajectory of $\mathbf{X}$ when restricted to $\mathcal{P}_v$ (i.e. the durations when $\X$ is outside $\mathcal{P}_v$ are ignored) and consecutive visits to the same point are removed. The process $(X_{\delta_{m_k}(\mathcal{P}_v)})_{0\le k\le K_v}$ is called \textbf{restriction} of $\X$ on $\mathcal{P}_v$. We also call $K_v$ the \textbf{killing time} of the restriction on $\mathcal{P}_v$.

We next define a nearest neighbour random walk $\X^{(v)} = (X_k^{(v)})_{k\ge0}$ on $\mathcal{P}_v$ as a process coupled with the restriction $(X_{\delta_{m_k}(\mathcal{P}_v)})_{0\le k\le K_v}$ such that, conditional on the event that $\mathcal{P}_v$ is visited infinitely many times by $\X$ (which implies that $K_v=\infty$), the process $\X^{(v)}$ coincides with $(X_{\delta_{m_k}(\mathcal{P}_v)})_{k\ge 0}$. 

We construct $\X^{(v)}$ using the exponential family $\boldsymbol{\xi}$. Let $\mathsf{L}_n^{(v)}(x)$ be the number of visits to $x$ by the process $\mathbf{X}^{(v)}$ and let $\mathsf{C}_n^{(v)}(x,y)$ be the number of crossings from $x$ to $y$ by $\mathbf{X}^{(v)}$ by time $n$.
The process $\mathbf{X}^{(v)}$ is defined as an excited random walk with random cookie stacks, where the number of cookies at a vertex $u\in \mathcal{P}_v$ randomly changes every time the process visits $u$. More specifically, to each $u\in \mathcal{P}_v$ we assign a random sequence $(R^{(v)}_u(k))_{k\ge 0}$ taking values in $\{0,1,\cdots, M\}$, where $R^{(v)}_u(k)$ represents the number of cookies at $u\in\mathcal{P}_v$ just before the $(k+1)$-st visit by $\X^{(v)}$ to $u$. This sequence is defined as follows. Set $R^{(v)}_u(0)=M$. For $0\le i\le M$, let 
$$\mathcal{J}_u(i) := \left\{ \ell\in\{M-i+1,\ldots,M\}: \arg\min_{z\sim u}\dfrac{\xi(u,z,\ell)}{r_\ell(u,z)} \in \mathcal P_v \right\}$$
If $R^{(v)}_u(k)=0$ then set $R_u^{(v)}(k+1)=0$. If  $R^{(v)}_u(k)=i$ with $1\le i\le M$, then set
$$R_u^{(v)}(k+1)=\begin{cases}
M- \inf \mathcal{J}_u(i) & \text{if $\mathcal{J}_u(i)\neq \emptyset$},\\
0, & \text{otherwise}.
\end{cases}
$$
Notice that $(R^{(v)}_u(k))_{k\ge 0}$ is a Markov chain. 

\begin{definition}\label{def:restricted} We define the process $\X^{(v)}=(X^{(v)}_n)_{n\ge 0}$ on $\mathcal{P}_v$ inductively as follows.  First, let $X_0^{(v)} = \rho$. Assume that $(X^{(v)}_k)_{0\le k\le n}$ has been defined. 
The next position $X_{n+1}^{(v)}$ is defined by:
\begin{enumerate}
    \item On the event $\big\{\X^{(v)}_n = u,   R^{(v)}_u(\mathsf{L}^{(v)}_n(u)-1)=m, \mathcal J_u(m)\neq\emptyset\big\}$ for some $1\le m\le M$, set
\[X^{(v)}_{n+1} = \arg\min_{w\sim u: w\in \mathcal{P}_v}  \frac{\xi(u,w,\inf \mathcal{J}_u(m) )}{r_{\inf \mathcal{J}_u(m) }(u,w)}.\]
   \item On the event $\{X^{(v)}_n = u, R_u^{(v)}(\mathsf L_n^{(v)}(u)-1)=m,\, \mathcal J_u(m)=\emptyset\}$ for some $1\le m\le M$, set
\[X_{n+1}^{(v)}
=
\arg\min_{w\sim u:\,w\in\mathcal P_v}
\xi\big(u,w,M+\mathsf{C}_n^{(v)}(u,w)+1\big).\]
\item On the event $\{X^{(v)}_n = u, R^{(v)}_u(\mathsf{L}^{(v)}_n(u)-1)=0\}$, set
\[X^{(v)}_{n+1} = \arg\min_{w\sim u: w \in \mathcal{P}_v}  \left\{\sum_{k = \mathsf C^{(v)}_{\tau_u^*(v)}(u,w)}^{\mathsf C^{(v)}_{n}(u,w)} \xi(u,w,k+M+1)\right\},\]
where
$$\tau^*_u(v) := \inf\left\{n\geq 0 : X_n^{(v)}=u,\, \mathcal J_u\!\left(R_u^{(v)}(\mathsf{L}^{(v)}_n(u)-1)\right)=\emptyset\right\}
$$
is the first time $\X^{(v)}$ visits $u$ when no remaining cookie prescribes a step to a neighbour in $\mathcal P_v$.
\end{enumerate}
We call $\X^{(v)}$ the \textbf{extension} of $\X$ on $\Pcal_{v}$. 
\end{definition}

Random walks with i.i.d. Markovian cookie stacks on $\Z$ have been studied by Kosygina, Peterson and Mountford \cite{KP2017, KMP2022}. We stress that the Markov cookie stacks in their model are assumed i.i.d. across the vertices. However, in our model on a path, the Markov chains $R_u^{(v)}$, with $u\in \mathcal{P}_v$, are independent but not necessarily identically distributed. The transition law of $R_u^{(v)}$ is given as follows. For a fixed $u\in \mathcal{P}_v$ and $1\le j\le M$, let 
\begin{align*}
    p^{(u)}_j:= \mathbb P\left(\arg\min_{z\sim u}\frac{\xi(u,z,j)}{r_j(u,z)} \in \mathcal P_v\right)
    =\begin{cases} \displaystyle
    \frac{\lambda^{(j)}_u + 1}{\lambda^{(j)}_u + \deg(u)-1} =\frac{\alpha_j \deg(u) +2}{(1+\alpha_j)\deg(u)}, & u\in \mathcal{P}_v\setminus\{\rho,v\},\\ \displaystyle
     \frac{1}{\deg(\rho)}, & u=\rho,\\ \displaystyle
\frac{\lambda_v^{(j)}}{\lambda_v^{(j)}+\deg(v)-1}
= \frac{1+\alpha_j\deg(v)}{(1+\alpha_j)\deg(v)}, & u=v.
\end{cases}
\end{align*}
For $0\le i,j\le M$, we have
$$\P\big( R_u^{(v)}(k+1)=j \mid R_u^{(v)}(k)=i\big) =
\begin{cases}
    1, & i=0,\; j=0,\\
    p^{(u)}_{M-j}\displaystyle\prod_{\ell=M-i+1}^{M-j-1}\bigl(1-p^{(u)}_{\ell}\bigr),
    & 1\le i\le M,\; 1\le j<i,\\
    \displaystyle\prod_{\ell=M-i+1}^{M-1}\bigl(1-p^{(u)}_{\ell}\bigr),
    & 1\le i\le M,\; j=0,\\
    0 & \text{otherwise}.
\end{cases}$$
By the above construction, we immediate obtain the following result:

\begin{proposition}[Restriction principle]\label{prop.res}
    We have a.s. 
    $$ X_{\delta_{m_k}(\mathcal{P}_v)}=X^{(v)}_k \quad\text{for all } 0\le k\le K_v.$$
\end{proposition}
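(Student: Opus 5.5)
We prove the restriction principle by induction on $k$, showing at each step that the restriction and the extension occupy the same vertex and see the same ``state'' at that vertex. The invariant, for $k\le K_v$ and each $u\in\mathcal P_v$ with $\ell:=\mathsf L^{(v)}_k(u)\ge1$, consists of three parts. First, $X_{\delta_{m_i}(\mathcal P_v)}=X^{(v)}_i$ for all $i\le k$. Second, the number of remaining cookies $M-\min(\mathsf L_{\delta_{m_k}}(u),M)$ is matched by $R^{(v)}_u(\ell-1)$. More precisely, when the extension is at its $\ell$-th visit to $u$, the original walk has consumed exactly the cookies with indices up to the first index $\inf\mathcal J_u(R^{(v)}_u(\ell-1))$ whose argmin lands in $\mathcal P_v$; if no such index exists, all $M$ cookies will be eaten by excursions off the path. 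Third, the crossing counts satisfy $\mathsf C^{(v)}_k(u,w)=\mathsf C_{\delta_{m_k}}(u,w)$ for $w\in\mathcal P_v$. The base case $k=0$ is trivial, since both processes start at $\rho$ with $R^{(v)}_u(0)=M$.

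For the inductive step, suppose both processes are at $u$ at restricted time $k<K_v$. In $\X$, the walk sits at $u$ and repeatedly uses the next cookie $\xi(u,\cdot,j)$. Every cookie index $j$ whose argmin lies outside $\mathcal P_v$ sends $\X$ into a subtree hanging off the path at $u$. Since $\mathcal T$ is a tree, $\X$ must return to $u$ before it can reach any other vertex of $\mathcal P_v$, and this return is ignored by the restriction because consecutive repetitions are removed. Hence the next distinct path vertex visited by $\X$ is determined by the first cookie index $j\ge M-i+1$ (with $i$ the current stack) whose argmin lies in $\mathcal P_v$. This index is exactly $\inf\mathcal J_u(i)$, and case (1) of Definition~\ref{def:restricted} uses the same exponential variable $\xi(u,w,\inf\mathcal J_u(i))$ restricted to path neighbours. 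The argmin over path neighbours coincides with the global argmin, because by definition of $\mathcal J_u$ the global argmin is a path neighbour. The update $R^{(v)}_u(k+1)=M-\inf\mathcal J_u(i)$ then matches the number of cookies left after this step.

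If instead $\mathcal J_u(i)=\emptyset$, the cookies at $u$ are exhausted by off-path excursions, provided $\X$ returns each time; on the event $k<K_v$ it must eventually leave $u$ along the path. After that, $\X$ at $u$ uses the uniform rule driven by the cumulative sums $\sum_k\xi(u,w,k+M+1)$ indexed by crossing counts. We need to check that the competition among path neighbours $w$ in $\X$ uses the same variables as cases (2)--(3) of the definition. Since the off-path neighbours carry their own independent clocks, the first path neighbour to win is determined by comparing only the path-neighbour sums. We then verify that the index offsets (starting from $\tau^*_u(v)$ and $\tau^{(M+1)}_u$, with $M+\mathsf C+1$) agree using invariant (iii). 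Finally, the crossing counts are updated identically in both processes, which closes the induction.

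The main obstacle is the non-excited case. The argmin of cumulative exponential sums across off-path excursions must be shown to select the same path neighbour as the extension's rule. The subtle point is that off-path neighbours may win many times in between, while the path-neighbour clocks remain partially consumed. This requires reading the construction's sums as ``time accumulated'' clocks and using that only path-neighbour crossings advance their indices. I would handle it by carefully matching the indices, and if an exact match fails I would reinterpret the statement as an equality in law, using memorylessness.
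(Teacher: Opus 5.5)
Your proposal follows the paper's proof: induction on $k$, with the excited step matched through $\inf\mathcal J_u(R^{(v)}_u(\cdot))$ and the non-excited step matched through the cumulative sums, using $\mathsf C_n(u,w)=\mathsf C^{(v)}_k(u,w)$ at $n=\delta_{m_{k+1}}(\mathcal P_v)-1$ and $\mathsf C_{\tau_u^{(M+1)}}(u,w)=\mathsf C^{(v)}_{\tau^*_u(v)}(u,w)$ for path neighbours $w$. The step you flag as the main obstacle closes exactly and deterministically, with no independence or memorylessness needed: the path-neighbour sums do not change during off-path excursions, and at time $n$ the global argmin $X_{n+1}$ lies in $\mathcal P_v$, so it is also the argmin over path neighbours. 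You should therefore drop the fallback to an equality in law, which would not give the almost sure coupling the statement asserts. Also fix an off-by-one in your invariant (ii) at the current vertex: it should read $R^{(v)}_u(\ell-1)=M-\min(\mathsf L_{\delta_{m_k}(\mathcal P_v)}(u)-1,M)$, because as written it already fails at $k=0$.
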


\begin{proof}
We prove the assertion by induction on $k$. For $k=0$, both processes start at $\rho$. Assume that the assertion holds up to some $k<K_v$. Since there is no cycle on tree $\mathcal T$, every visit of $\X$ to $\mathcal P_v$ between times $\delta_{m_k}(\mathcal P_v)$ and $\delta_{m_{k+1}}(\mathcal P_v)-1$ is to $X_k^{(v)}$. Let
$u=X_k^{(v)}$ and $n=\delta_{m_{k+1}}(\mathcal P_v)-1$. Thus,
\begin{equation}\label{eq:rest.cross}
X_n=u,\quad \mathsf{C}_n(u,w)=\mathsf{C}_k^{(v)}(u,w)
\quad\text{for }w\sim u,\ w\in\mathcal P_v.
\end{equation}

By the definition of $\mathcal J_u$ and the recursion for $R_u^{(v)}$, both processes use the cookie indices in $\mathcal J_u(M)$ in increasing order. If $\mathsf L_n(u)\le M$, we therefore have
$\mathsf L_n(u) = \inf\mathcal J_u\!\big(R_u^{(v)}(\mathsf L_k^{(v)}(u)-1)\big)$. By the construction in Section~\ref{sec:rubin} and using Case~1 of Definition~\ref{def:restricted}, we have in this case that
$$
X_{n+1}=\arg\min_{\substack{w\sim u\\w\in\mathcal P_v}} \frac{\xi(u,w,\mathsf L_n(u))}{r_{\mathsf L_n(u)}(u,w)} = X_{k+1}^{(v)}.
$$

If $\mathsf L_n(u)\ge M+1$, the recursion for $R_u^{(v)}$ and the definition of $\tau_u^*(v)$ ensure that, for $w\sim u$ with $w\in\mathcal P_v$,
$$
\mathsf{C}_{\tau_u^{(M+1)}}(u,w) = \sum_{\ell=1}^M \mathds{1}_{\left\{ \arg\min_{z\sim u}\frac{\xi(u,z,\ell)}{r_\ell(u,z)}=w \right\}} = \mathsf{C}_{\tau_u^*(v)}^{(v)}(u,w).
$$
Since $X_{n+1}\in\mathcal P_v$, the minimum defining $X_{n+1}$ is restricted to neighbours in $\mathcal P_v$. By \eqref{eq:rest.cross}, we have in this case that
$$
X_{n+1} = \arg\min_{\substack{w\sim u\\w\in\mathcal P_v}} \Bigg\{ \sum_{\ell=\mathsf{C}_{\tau_u^*(v)}^{(v)}(u,w)}^{\mathsf{C}_k^{(v)}(u,w)} \xi(u,w,\ell+M+1) \Bigg\} = X_{k+1}^{(v)}.$$
Here we use Cases~2 and~3 of Definition~\ref{def:restricted}; in Case~2, $\tau_u^*(v)=k$, so each sum consists of a single term.
In both cases,
$$
X_{\delta_{m_{k+1}}(\mathcal P_v)} = X_{n+1} = X_{k+1}^{(v)}.
$$
The assertion follows by induction.
\end{proof}

\section{Markov chains related to the number of backward steps} \label{sec:backward}
In this section we study ruin probabilities along finite paths. Throughout this section, we fix a path $\mathcal P_{v_n}=(v_0,\ldots,v_n)$ from the root $v_0=\rho$ to a vertex $v_n$ at distance $n$ from $\rho$. In Section~\ref{sec:markov}, we represent the numbers of backward steps made by the extension process $\X^{(v_n)}$ before its first visit to $v_n$ by an inhomogeneous Markov chain $Y^{(n)}=(Y_k^{(n)})_{0\le k\le n-1}$. The ruin probability that $\X^{(v_n)}$ reaches $v_n$ before returning to $\rho$ is exactly equal to $\P(Y_{n-1}^{(n)}=0\mid Y_{0}^{(n)}=0)$. The transition laws of $Y^{(n)}$ depend on the degrees of the vertices along $\mathcal P_{v_n}$. In Section~\ref{sec:comp}, we compare $Y^{(n)}$ with a homogeneous Markov chain $\widetilde Y$, whose transition law agrees with that of $Y^{(n)}$ at every state at least $M-1$, where $M$ is the number of cookies initially placed at each vertex. We compute the probability generating functions of $\widetilde Y$ for each initial state and study their asymptotics. Using these estimates and stochastic domination, we prove exact ruin probability asymptotics with exponent $D+1$ in Section~\ref{sec:ruin}, where $D$ is defined in Theorem~\ref{main.thm}. In Section~\ref{sec:gen_ruin}, we also bound the probability of reaching $v_n$ before a prescribed number of returns to $\rho$. These results will be used in the percolation argument in Section~\ref{sec:perc}.

Throughout this paper, we use generic sufficiently small positive constants $c$ and sufficiently large constants $C$, which may change their values between occurrences.

\subsection{Markovian structure of backward steps}\label{sec:markov}
Throughout this section, we fix an infinite self-avoiding path $\varkappa=(v_k)_{k\ge 0}$ with $v_0=\rho$.

Fix $n\ge 1$. We consider the extension process $\X^{(v_n)}$ on the path $\Pcal_{v_n}=(v_k)_{0\le k\le n}$. 
For $1\le k\le n$, let $B_n(k)$ be the number of backward steps of $\X^{(v_n)}$ from $v_k$ to $v_{k-1}$ by the first time when $\X^{(v_n)}$ hits the endpoint $v_n$.
We will show that the sequence
$$
\big(B_n(n), B_n(n-1), \dots, B_n(1)\big)
$$
is an inhomogeneous Markov chain. Note that $B_n(n)=0$. Let $r_n$ be the \textbf{ruin probability} that $\X^{(v_n)}$ reaches $v_n$ before returning to $\rho$. Then
$$r_n=\P(B_n(1)=0).$$

The following coupling of $\boldsymbol{\xi}$ with a collection of Bernoulli random variables $\boldsymbol{\eta} = \left(\eta_x(j) : x\in\varkappa,\ j\ge1\right)$ will be useful when analysing the behaviour of the sequence $(B_n(n-k))_{0\le k\le n-1}$. For $x\in\varkappa$, let $x^{+1}$ denote its unique child in $\varkappa$. We use the construction of $\mathcal J_x$ in Section~\ref{sec:ext}. Recall that
$$
\mathcal J_x(M)= \left\{ \ell\in\{1,\ldots,M\}: \arg\min_{z\sim x}\frac{\xi(x,z,\ell)}{r_\ell(x,z)} \in\{x^{-1},x^{+1}\} \right\}.
$$
For $1\le j\le\operatorname{card}(\mathcal J_x(M))$, let $\ell$ be the $j$-th smallest element of $\mathcal J_x(M)$ and define
$$
\eta_x(j) = \mathds{1}_{\left\{ \arg\min_{z\sim x}\frac{\xi(x,z,\ell)}{r_\ell(x,z)} =x^{+1} \right\}}.
$$
For $j>\operatorname{card}(\mathcal J_x(M))$, define recursively
$$
\eta_x(j) = \mathds{1}_{\left\{
\sum_{k=\sum_{i=1}^{|\mathcal J_x(M)|}\eta_x(i)}^{\sum_{i=1}^{j-1}\eta_x(i)} \xi(x,x^{+1},k+M+1)<\sum_{k=\sum_{i=1}^{|\mathcal J_x(M)|}(1-\eta_x(i))}^{\sum_{i=1}^{j-1}(1-\eta_x(i))}\xi(x,x^{-1},k+M+1)\right\}}.
$$
We also set $\eta_\rho(j)=1$ for every $j\ge1$. 
Notice that, for every $x\in\mathcal P_{v_n}\setminus\{\rho,v_n\}$ and $j\ge1$, we also have
$$
\eta_x(j) = \sum_{k=0}^{\infty} \mathds{1}_{\{X_k^{(v_n)}=x,\ X_{k+1}^{(v_n)}=x^{+1},\ \mathsf L_k^{(v_n)}(x)=j\}}.
$$
Moreover, for each vertex $x\in\varkappa\setminus\{\rho\}$, we have
$\sigma(\eta_x(j):j\ge1) \subseteq \sigma(\xi(x,z,\ell):z\sim x,\ \ell\ge1).$
Since the random variables in $\boldsymbol\xi$ are independent, the sequences $(\eta_x(j))_{j\ge1}$ are independent for distinct vertices $x\in \varkappa\setminus\{\rho\}$.

For integers $m\ge0$ and $k\ge 1$, let
\begin{align}\label{def.Phi}\Phi_k(m) = \inf\left\{\ell\ge0:\sum_{j=1}^{\ell+m}\eta_{v_k}(j)=m\right\}\end{align}
which, for $m\ge1$, can be viewed as the number of backward steps from $v_k$ before the $m$-th forward step from $v_k$.
Since the sequences $(\eta_{v_k}(j))_{j\ge1}$ are independent for distinct $k\ge 1$, the random functions $(\Phi_k)_{k\ge1}$ are independent. The law of $\Phi_k$ depends only on $\deg(v_k)$ and $(\lambda_{v_k}^{(j)})_{j=1}^M$.
Since every edge on $\mathcal P_{v_n}$ is crossed once more forward than backward before the first hitting time of $v_n$, we have
\begin{equation}\label{eq:B.recursion}
B_n(k) = \Phi_{k}\bigl(B_n(k+1)+1\bigr)
\quad\text{for } 1\le k\le n-1.
\end{equation}

For each $n\ge1$, let $(Y_m^{(n)})_{0\le m\le n-1}$ be an inhomogeneous Markov chain, whose transition law is given by
\begin{equation}\label{def:Y}
\mathbb P_i\bigl(Y_{m+1}^{(n)}=j\mid Y_m^{(n)}=\ell\bigr) = \mathbb P\bigl(\Phi_{n-m-1}(\ell+1)=j\bigr),
\end{equation}
for $0\le m\le n-2$ and $\ell,j\in\mathbb Z_+$. Here $\mathbb P_i$ denotes its law with initial state $i$.  Under $\mathbb P_0$, we have
\begin{equation}\label{eq:comp}
\left(B_n(n),B_n(n-1),\dots,B_n(1)\right)
\overset{\mathrm d}{=}
\left(Y_0^{(n)},Y_1^{(n)},\dots,Y_{n-1}^{(n)}\right).
\end{equation}
Consequently, we have
\begin{align}\label{def:ruin} r_n=\P(B_n(1)=0)=\P_0(Y_{n-1}^{(n)}=0).\end{align}

\subsection{Comparison with a time-homogeneous Markov chain}\label{sec:comp}
Fix $n\ge 1$. Recall that $(Y_k^{(n)})_{0\le k\le n-1}$ is the inhomogeneous Markov chain defined by \eqref{def:Y}. 

We aim to analyse $\P_0(Y_{n-1}^{(n)}=0)$, which is equal to the ruin probability of the extension process $\X^{(v_n)}$ defined on $\mathcal{P}_{v_n}$. To this end, we compare $Y^{(n)}$ with a time-homogeneous Markov chain, $\widetilde Y$.

In this section, we assume that, for each $u\in\varkappa\setminus\{\rho\}$ and $1\le j\le M$,
$$\lambda_u^{(j)}=1+\alpha_j\deg(u),$$ and set
$$\beta_j:=\frac{\alpha_j}{1+\alpha_j}.$$
Recall that for $m\ge 0$ and $k\ge 1$, $\Phi_k(m)$ is defined by \eqref{def.Phi}.

\begin{lemma}\label{lem:F_pgf}
For $1\le k\le n-1$, $0\le s<2$ and $m\ge M$,
$$
\mathbb E[s^{\Phi_k(m)}]
=
\left(\frac{1}{2-s}\right)^m
\prod_{j=1}^M \bigl(1-\beta_j(1-s)\bigr).
$$
\end{lemma}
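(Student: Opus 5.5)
We first identify the law of the Bernoulli sequence $(\eta_{v_k}(j))_{j\ge1}$ for an interior vertex $x=v_k$, $1\le k\le n-1$. The plan is to show that the first $M$ steps \emph{restricted to the path} behave as follows. Among the cookie indices $\ell=1,\dots,M$, index $\ell$ belongs to $\mathcal J_x(M)$ with probability $p^{(x)}_\ell=\frac{\alpha_\ell\deg(x)+2}{(1+\alpha_\ell)\deg(x)}$. Given that it does, the conditional probability of a forward step is
$$
\frac{1}{\lambda_x^{(\ell)}+1}=\frac{1}{2+\alpha_\ell\deg(x)}.
$$
After the used indices in $\mathcal J_x(M)$ are exhausted, the steps are fair coin flips: by memorylessness of the exponential sums in the definition of $\eta_x(j)$, each subsequent $\eta_x(j)$ is an independent Bernoulli$(1/2)$. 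The key simplification we aim for is that the two effects combine into a degree-free description. For each cookie $\ell\le M$, independently, the cookie either
\begin{itemize}
\item produces a path step, and that step is backward with probability $\frac{1+\alpha_\ell\deg(x)}{2+\alpha_\ell\deg(x)}$, or
\item is wasted off the path, in which case it is effectively replaced by one fair step at the end.
\end{itemize}

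The cleanest way to make this precise is a "cookie-by-cookie'' decomposition of $\Phi_k(m)$. Since $m\ge M$, the $m$-th forward step occurs after all $M$ excited indices have been examined, because at most $M$ of the first forward steps can use cookies. Write $\Phi_k(m)$ as the number of backward steps (failures) before the $m$-th success in the sequence $\eta_x(1),\eta_x(2),\dots$. Here the first $|\mathcal J_x(M)|$ trials have the cookie-dependent success probabilities and all later trials are fair. For cookie $\ell$ define $\chi_\ell\in\{\text{off},\text{fwd},\text{bwd}\}$, with probabilities
$$
\frac{(\deg(x)-2)}{(1+\alpha_\ell)\deg(x)},\qquad \frac{1}{(1+\alpha_\ell)\deg(x)},\qquad \frac{1+\alpha_\ell\deg(x)}{(1+\alpha_\ell)\deg(x)}.
$$
Then $\Phi_k(m)$ equals $\#\{\ell:\chi_\ell=\text{bwd}\}$ plus the number of failures of fair coins before the $(m-\#\{\ell:\chi_\ell=\text{fwd}\})$-th success. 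The condition $m\ge M$ guarantees that the latter index is at least $m-M\ge0$ and that no forward step is "cut off'' during the cookie phase.

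Conditioning on $(\chi_\ell)$, the fair part is negative binomial with pgf $(2-s)^{-(m-F)}$, where $F=\#\{\text{fwd}\}$. Hence
$$
\E[s^{\Phi_k(m)}]=(2-s)^{-m}\prod_{\ell=1}^M \E\big[s^{\1_{\chi_\ell=\text{bwd}}}(2-s)^{\1_{\chi_\ell=\text{fwd}}}\big].
$$
Each factor is
$$
\frac{(\deg-2)+(2-s)+s(1+\alpha_\ell\deg)}{(1+\alpha_\ell)\deg}
=\frac{\deg+s\alpha_\ell\deg}{(1+\alpha_\ell)\deg}
=1-\beta_\ell(1-s),
$$
which gives the claim. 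It is pleasant that the degree cancels exactly, and this is what we check as a sanity test.

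The main obstacle is justifying the independence and decomposition rigorously from the $\boldsymbol\xi$-construction. First, the categories $\chi_\ell$ are independent across $\ell$, since they are determined by the distinct families $\xi(x,\cdot,\ell)$. Second, the post-cookie Bernoulli$(1/2)$ trials are i.i.d. and independent of $(\chi_\ell)$: this uses memorylessness and that $\xi(x,x^{\pm1},k+M+1)$ are fresh variables. Third, we must reconcile the bookkeeping of where the fair trials start in the definition of $\eta_x(j)$ with the "off'' cookies. An off cookie contributes no path step, so it simply does not appear in the $\eta$ sequence, and the count of fair trials needed is $m-F$. We would verify this carefully, noting that for $m<M$ the formula fails precisely because the $m$-th success may occur before all cookies are read.
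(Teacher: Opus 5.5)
Your proposal is correct and is essentially the paper's proof: your decomposition $\Phi_k(m)=\#\{\ell:\chi_\ell=\text{bwd}\}+H_{m-F}$ is the paper's $\Phi_k(m)=B_k+H_{m-A_k}$, and you then condition on the cookie outcomes and factor over the $M$ independent cookies exactly as the paper does, with each factor equal to $1-\beta_\ell(1-s)$. One small correction: your informal remark that an off-path cookie is ``replaced by one fair step at the end'' is not literally accurate, since an off-path cookie changes neither $F$ nor the backward count, but you never use it in the computation.
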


\begin{proof}
For $1\le k\le n-1$, let $$A_k^i=\mathds{1}_{\left\{\arg\min_{z\sim v_k}\frac{\xi(v_k,z,i)}{r_i(v_k,z)}=v_{k+1}\right\}}
\quad\text{and}\quad B_k^i=\mathds{1}_{\left\{\arg\min_{z\sim v_k}\frac{\xi(v_k,z,i)}{r_i(v_k,z)}=v_{k-1}\right\}}$$
be the indicators that, on the $i$-th visit to $v_k$, the walk $\X$ makes a forward step to $v_{k+1}$, and a backward step to $v_{k-1}$ respectively. Notice that it is possible that $A_k^i=B_k^i=0$, which corresponds to the event that the $i$-th jump by $\X$ from $v_k$ is to a child of $v_k$ not lying in $\varkappa$. Define 
$$A_k=\sum_{i=1}^M A_k^i,\quad B_k=\sum_{i=1}^M B_k^i.$$
Then $A_k$ (and $B_k$) are the number of forward steps (backward steps respectively) along $\varkappa$ during the first $M$ visits to $v_k$. For $m\ge M$, we notice that
$$
\Phi_k(m)=B_k+H_{m-A_k},
$$
where $H_r$ is the number of backward steps before the $r$-th forward step after the cookies have been consumed, which is negative binomial distributed, counting the number of failures before the $r$-th success in a sequence of independent $\operatorname{Bernoulli}(1/2)$ trials. In the expression above, when $m\le A_k$, we use the convention $H_{m-A_k}=0$.

Conditioning on $(A_k, B_k)$ with the law of total probability and using the fact that 
$H_{m-i}$ is negative binomial with parameter $1/2$, we have
\begin{align*}
    \E\left[s^{\Phi_k(m)}\right]
    = \E\left[s^{B_k + H_{m-A_k}}\right]
    = \sum_{i=0}^M \sum_{j=0}^{M-i} \P(A_k = i,\, B_k = j)\, s^j \left(\frac{1}{2-s}\right)^{m-i}.
\end{align*}
Factoring out $(2-s)^{-m}$ and recognising the remaining sum as an expectation, we have
\begin{align*}
    \E\left[s^{\Phi_k(m)}\right] = \left(\frac{1}{2-s}\right)^{m} \E\left[s^{B_k}(2-s)^{A_k}\right].
\end{align*}
Since the pairs $(A_k^i, B_k^i)$ are independent across $i$,
\begin{align*}
    \E\left[s^{B_k}(2-s)^{A_k}\right]
    = \prod_{i=1}^M \E\left[s^{B_k^i}(2-s)^{A_k^i}\right].
\end{align*}
For each $1\le i\le M$, we have
\begin{align*}
&\P(A_k^i=1, B_k^i=0)=\frac{1}{\lambda_{v_k}^{(i)}+\deg(v_k)-1}, \quad
\P(A_k^i=0, B_k^i=1)=\frac{\lambda_{v_k}^{(i)}}{\lambda_{v_k}^{(i)}+\deg(v_k)-1},\\
&\P(A_k^i=B_k^i=0)=\frac{\deg(v_k)-2}{\lambda_{v_k}^{(i)}+\deg(v_k)-1}.
\end{align*}
Therefore
\begin{align*}
    \E\left[s^{B_k^i}(2-s)^{A_k^i}\right]
    &= \frac{2-s}{\lambda_{v_k}^{(i)}+\deg(v_k)-1}
    + \frac{\lambda_{v_k}^{(i)}s}{\lambda_{v_k}^{(i)}+\deg(v_k)-1}
    + \frac{\deg(v_k)-2}{\lambda_{v_k}^{(i)}+\deg(v_k)-1}  \\
    &=   \frac{\deg(v_k)+(\lambda_{v_k}^{(i)}-1)s}{\lambda_{v_k}^{(i)}+\deg(v_k)-1}.
\end{align*}
Using the assumption $\lambda_{v_k}^{(i)}=1+\alpha_i\deg(v_k)$, this becomes
\begin{align*}
    \E\left[s^{B_k^i}(2-s)^{A_k^i}\right]  &=
    \frac{\deg(v_k)+\alpha_i\deg(v_k)s}{\deg(v_k)+\alpha_i\deg(v_k)} = 1-\frac{\alpha_i}{1+\alpha_i}(1-s) = 1-\beta_i(1-s).
\end{align*}
Hence
\begin{align*}
\E\left[s^{B_k}(2-s)^{A_k}\right]  = \prod_{i=1}^M \bigl(1-\beta_i(1-s)\bigr).
\end{align*}
Substituting back, we obtain the desired result.
\end{proof}

Assume that $\alpha_j\ge0$ for $1\le j\le M$. We now define the homogeneous Markov chain $\widetilde Y$.

\begin{definition}\label{def:Ytilde}
Let $(\widetilde Y_k)_{k\ge 0}$ on $\Z_+$ be the Markov chain such that for each $m\ge 0$,
    $$\mathbb E\!\left[s^{\widetilde Y_{k+1}}\,\middle|\,\widetilde Y_k=m\right]=
(2-s)^{-(m+1)}\prod_{j=1}^M\bigl(1-\beta_j(1-s)\bigr).$$
\end{definition}
By \eqref{def:Y} and Lemma \ref{lem:F_pgf}, we notice that for $0\le k\le n-2$, the conditional laws of $Y_{k+1}$ given $Y_k=m$ and $\widetilde Y_{k+1}$ given $\widetilde Y_k=m$ are identical when $m\ge M-1$.

Let $\widetilde{\mathbb P}_m$ be the probability measure conditioning on the event $\{\widetilde Y_0=m\}$.  Define 
$$G_n(m,s):=\widetilde{\mathbb E}_m[s^{\widetilde Y_n}],
\quad m\in\mathbb Z_+,\ 0\le s\le 1.$$ 


\begin{lemma}\label{lem.G}
For each fixed $m\in \Z_+$, $G_n(m,s)$ satisfies the functional equation
\begin{equation}\label{recur.g}
G_{n+1}(m,s)= \frac{1}{2-s} \prod_{j=1}^M \bigl(1-\beta_j(1-s)\bigr) \cdot G_n\Bigl(m,\frac{1}{2-s}\Bigr)
\end{equation}
with the initial condition
$G_0(m,s) =s^m.$
Additionally, $G_n(m,s)$ has the exact solution 
\begin{equation}\label{exact.g}
G_n(m,s) = \frac{\bigl(n-(n-1)s\bigr)^m}{\bigl((n+1)-ns\bigr)^{m+1}}  \prod_{i=0}^{n-1}\prod_{j=1}^M   \left(1 - \beta_j\left(1-\frac{i-(i-1)s}{(i+1)-is}\right)\right).
\end{equation}
\end{lemma}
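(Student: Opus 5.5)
Write $P(s):=\prod_{j=1}^M\bigl(1-\beta_j(1-s)\bigr)$ and $\phi(s):=1/(2-s)$. The plan is to derive the recursion \eqref{recur.g} by conditioning on the \emph{first} step of $\widetilde Y$, not the last, and then to solve it by iterating the Möbius map $\phi$.

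\textbf{The recursion.} Fix $m$ and condition on $\widetilde Y_{n}$ as usual; by Definition~\ref{def:Ytilde},
$$\widetilde{\mathbb E}_m\bigl[s^{\widetilde Y_{n+1}}\mid \widetilde Y_n\bigr]=(2-s)^{-(\widetilde Y_n+1)}P(s)=\phi(s)\,P(s)\,\phi(s)^{\widetilde Y_n}.$$
Taking expectations gives
$$G_{n+1}(m,s)=\phi(s)P(s)\,\widetilde{\mathbb E}_m\bigl[\phi(s)^{\widetilde Y_n}\bigr]=\phi(s)P(s)\,G_n(m,\phi(s)),$$
which is exactly \eqref{recur.g}. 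The only point to check is that the pgf formula may be evaluated at $\phi(s)\in[1/2,1]$. This holds because $\phi(s)\in[0,1]$ for $s\in[0,1]$, and the transition pgf is a genuine pgf there. Since $\beta_j\in[0,1)$, each factor of $P$ is nonnegative on $[0,1]$, so no sign issues arise. The initial condition $G_0(m,s)=s^m$ is immediate.

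\textbf{Solving by iteration.} Iterating the recursion $n$ times gives
$$G_n(m,s)=\Bigl(\phi^{\circ n}(s)\Bigr)^m\prod_{i=0}^{n-1}\phi\bigl(\phi^{\circ i}(s)\bigr)P\bigl(\phi^{\circ i}(s)\bigr),$$
with $\phi^{\circ 0}=\mathrm{id}$. The key computation is the closed form of the iterates:
$$\phi^{\circ i}(s)=\frac{i-(i-1)s}{(i+1)-is}.$$
I will prove this by induction. It is trivial at $i=0$. For the step, $2-\phi^{\circ i}(s)=\frac{(i+2)-(i+1)s}{(i+1)-is}$, so its reciprocal is the claimed formula at $i+1$.

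This immediately gives the $P$-factors $\prod_{i=0}^{n-1}P(\phi^{\circ i}(s))$ in \eqref{exact.g}. The remaining scalar factors combine as
$$\prod_{i=0}^{n-1}\phi^{\circ(i+1)}(s)=\prod_{i=0}^{n-1}\frac{(i+1)-is}{(i+2)-(i+1)s}=\frac{1}{(n+1)-ns},$$
since the product telescopes. Together with $(\phi^{\circ n}(s))^m=\frac{(n-(n-1)s)^m}{((n+1)-ns)^m}$, this yields \eqref{exact.g}.

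\textbf{Main obstacle.} The main point needing care is the orientation of the composition. Iterating from the last step, as above, nests the arguments correctly: $G_n(m,\phi(s))$ again unrolls via $\phi$. Alternatively, one can verify \eqref{exact.g} directly by induction on $n$. Substitute $s\mapsto\phi(s)$ into the formula for $G_n$. This shifts every index $i\to i+1$, because $\phi^{\circ i}(\phi(s))=\phi^{\circ(i+1)}(s)$. Multiplying by $\phi(s)P(s)$ then supplies the missing $i=0$ factor, and the denominators telescope as computed. This reproduces the formula for $G_{n+1}$.
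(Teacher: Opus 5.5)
Your proof is correct and follows the paper's route. You derive the recursion by conditioning on $\widetilde Y_n$, i.e.\ on the state before the \emph{last} transition, exactly as the paper does; your opening remark about conditioning on the first step is a slip that your actual computation does not follow. Your closed form $\phi^{\circ i}(s)=\frac{i-(i-1)s}{(i+1)-is}$ and the telescoping product are the same two identities the paper uses in its inductive verification of \eqref{exact.g}, and unrolling the iterates simply makes it visible where the formula comes from.
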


\begin{proof}
Since $\widetilde Y_0=m$ under $\widetilde\P_m$, we have $G_0(m,s)=s^m$. By Definition~\ref{def:Ytilde} and conditioning on $\widetilde Y_n$, we obtain
\begin{align*}
G_{n+1}(m,s)
&=\sum_{k=0}^{\infty}\widetilde\P_m(\widetilde Y_n=k)
(2-s)^{-(k+1)}
\prod_{j=1}^M\bigl(1-\beta_j(1-s)\bigr)\\
&=\frac{1}{2-s}\prod_{j=1}^M\bigl(1-\beta_j(1-s)\bigr)
G_n\Bigl(m,\frac{1}{2-s}\Bigr),
\end{align*}
which proves \eqref{recur.g}.

We prove \eqref{exact.g} by induction. The case $n=0$ agrees with the initial condition, with the empty product equal to $1$. Assume that \eqref{exact.g} holds for $n$. We have
$$
\frac{i-(i-1)\frac{1}{2-s}}{(i+1)-i\frac{1}{2-s}}
=\frac{i+1-is}{(i+2)-(i+1)s}
$$
and
$$
\frac{1}{2-s}
\frac{\left(n-(n-1)\frac{1}{2-s}\right)^m}
{\left((n+1)-n\frac{1}{2-s}\right)^{m+1}}
=
\frac{\bigl((n+1)-ns\bigr)^m}
{\bigl((n+2)-(n+1)s\bigr)^{m+1}}.
$$
Substituting \eqref{exact.g} into \eqref{recur.g} and using these identities, we re-index the product from $i=0,\ldots,n-1$ to $i=1,\ldots,n$. The factor $\prod_{j=1}^M(1-\beta_j(1-s))$ in \eqref{recur.g} is the $i=0$ factor. Therefore
$$
G_{n+1}(m,s)
=
\frac{\bigl((n+1)-ns\bigr)^m}
{\bigl((n+2)-(n+1)s\bigr)^{m+1}}
\prod_{i=0}^{n}\prod_{j=1}^M
\left(1-\beta_j\left(1-\frac{i-(i-1)s}{(i+1)-is}\right)\right),
$$
which completes the induction.
\end{proof}

Recall that the Gamma function is defined, for ${\rm Re}(x)>0$, by
$$
\Gamma(x):=\int_0^\infty t^{x-1}e^{-t}\,dt.
$$
For ${\rm Re}(z)>-1$, we use $1/\Gamma(z)=z/\Gamma(z+1)$, including $1/\Gamma(0)=0$.
In the next result, we obtain the exact asymptotic behaviour of $G_n(m,s)$ as $n\to\infty$.

\begin{lemma}\label{lem:G.asym} For each fixed $m\in\Z_+$ and $s\in[0,1)$, we have
$$G_n(m,s)\sim\frac{C(s)}{n^{1+D}}\quad\text{as }n\to\infty,$$
where
$$C(s):=\frac{1}{1-s}\prod_{j=1}^M\frac{\Gamma((1-s)^{-1})}{\Gamma((1-s)^{-1}-\beta_j)}.$$
Moreover, for every $m\in\Z_+$, $s\in[0,1)$ and $n\ge1$,
$$G_n(m,s)\le(1+n(1-s))^{-D-1}\le (1-s)^{-D-1} n^{-D-1}$$
Additionally, for each fixed $m\in\Z_+$,
$$G_n(m,s)=\frac{1}{n^{1+D}}\left(C(s)+\epsilon_n(m,s)\right),\quad |s|=\frac12,$$
where $\sup_{s\in \C: |s|=1/2}|\epsilon_n(m,s)|\rightarrow 0$ as $n \rightarrow \infty$.
\end{lemma}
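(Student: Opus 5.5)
The plan is to work directly from the exact formula \eqref{exact.g} of Lemma~\ref{lem.G} after the substitution $t:=1-s$ and $x:=1/t$. The key algebraic simplification is
$$
\frac{i-(i-1)s}{(i+1)-is}=\frac{1+(i-1)t}{1+it},
$$
so that $1-\frac{1+(i-1)t}{1+it}=\frac{t}{1+it}$. Each factor of the double product in \eqref{exact.g} then becomes
$$
1-\frac{\beta_j t}{1+it}=\frac{i+x-\beta_j}{i+x}.
$$
The product over $0\le i\le n-1$ therefore telescopes into Gamma functions:
$$
\prod_{i=0}^{n-1}\frac{i+x-\beta_j}{i+x}=\frac{\Gamma(x)}{\Gamma(x-\beta_j)}\cdot\frac{\Gamma(n+x-\beta_j)}{\Gamma(n+x)},
$$
with the convention $1/\Gamma(0)=0$ in the degenerate case $x=\beta_j$, where the $i=0$ factor vanishes. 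The prefactor is
$$
\frac{(1+(n-1)t)^m}{(1+nt)^{m+1}}.
$$

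\emph{Pointwise asymptotics.} For fixed real $s\in[0,1)$ we have $t\in(0,1]$, and the prefactor behaves like $1/(nt)=x/n$. The classical asymptotic $\Gamma(n+a)/\Gamma(n+b)\sim n^{a-b}$ gives, for each $j$, a factor $n^{-\beta_j}\,\Gamma(x)/\Gamma(x-\beta_j)$. Multiplying these together yields $C(s)\,n^{-1-D}$, with $C(s)$ exactly as stated, since $x=(1-s)^{-1}$ and $D=\sum_j\beta_j$.

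\emph{Uniform bound.} The prefactor is at most $(1+nt)^{-1}$, because $1+(n-1)t\le 1+nt$. For the product I would use $1-u\le e^{-u}$ to bound the $j$-th product by
$$
\exp\Bigl(-\beta_j\sum_{i=0}^{n-1}\frac{1}{i+x}\Bigr).
$$
Comparing the sum with an integral gives $\sum_{i=0}^{n-1}\frac{1}{i+x}\ge\log\frac{n+x}{x}=\log(1+nt)$, so the $j$-th product is at most $(1+nt)^{-\beta_j}$. Taking the product over $j$ and combining with the prefactor gives $(1+nt)^{-D-1}$. This is at most $(nt)^{-D-1}$, which is the second inequality in the statement.

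\emph{Uniform statement on $|s|=\tfrac12$.} The Möbius map $s\mapsto x=1/(1-s)$ sends the circle $|s|=\tfrac12$ to a compact circle in the right half-plane, on which $\operatorname{Re}x\ge 2/3$. I would then do the following.
\begin{itemize}
\item The prefactor satisfies $(1+(n-1)t)^m/(1+nt)^{m+1}=\frac{x}{n}\bigl(1+O(1/n)\bigr)$, uniformly, because $|t|$ lies in $[\tfrac12,\tfrac32]$.
\item The ratio $\Gamma(n+x-\beta_j)/\Gamma(n+x)=n^{-\beta_j}\bigl(1+O(1/n)\bigr)$ holds uniformly for $x$ in a compact set, by Stirling's formula applied with $n+x$ and $n+x-\beta_j$ staying in a sector away from the poles.
\item The factors $\Gamma(x)/\Gamma(x-\beta_j)$ are continuous, hence bounded, on the compact image of the circle. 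Here $1/\Gamma$ is entire, which handles points where $x-\beta_j$ hits $0$ or leaves the half-plane; note $\operatorname{Re}(x-\beta_j)>-1/3$.
\end{itemize}
It follows that $n^{1+D}G_n(m,s)-C(s)=O(1/n)$ uniformly on the circle, so $\sup_{|s|=1/2}|\epsilon_n(m,s)|\to0$.

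The main obstacle is this last step: one needs a genuinely uniform complex Stirling estimate for the ratio of Gamma functions, and must take care with the possible zeros coming from $1/\Gamma(x-\beta_j)$. I would handle it by writing
$$
\log\Gamma(n+a)-\log\Gamma(n+b)=(a-b)\log n+O(1/n),
$$
uniformly for $a,b$ in compacts of $\{\operatorname{Re}>-1\}$ once $n$ is large. The finitely many initial factors, which carry the possible zero, are kept exactly rather than estimated.
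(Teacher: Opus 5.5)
Your proposal is correct and follows essentially the same route as the paper. Both proofs rewrite the exact formula \eqref{exact.g} in terms of $t=1-s$ and telescope the product into $\frac{\Gamma(x)}{\Gamma(x-\beta_j)}\cdot\frac{\Gamma(n+x-\beta_j)}{\Gamma(n+x)}$; both obtain the uniform bound via $1-u\le e^{-u}$ and an integral comparison, and both handle the circle $|s|=\tfrac12$ with a uniform Stirling ratio estimate (the paper's Lemma~\ref{lem:Gamma}) plus continuity of $\Gamma(x)/\Gamma(x-\beta_j)$ on the image of the circle.
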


\begin{proof}
We first note that
\begin{align}\label{term1}
\frac{\bigl(n-(n-1)s\bigr)^m}{\bigl((n+1)-ns\bigr)^{m+1}}=\frac{\bigl(1+(n-1)(1-s)\bigr)^m}{\bigl(1+n(1-s)\bigr)^{m+1}}\sim\frac{1}{n(1-s)}\quad\text{as }n\to\infty,
\end{align}
for any fixed $m\in\Z_+$ and any fixed $s\in[0,1)$. To analyse the product term in \eqref{exact.g}, we write
\begin{align*}
\prod_{i=0}^{n-1}\prod_{j=1}^M\left(1-\beta_j\left(1-\frac{i-(i-1)s}{(i+1)-is}\right)\right)=\prod_{i=0}^{n-1}\prod_{j=1}^M\left(1-\beta_j\frac{1-s}{1+i(1-s)}\right).
\end{align*}
For each $j=1,\ldots,M$, we have
\begin{align*}
\prod_{i=0}^{n-1} \left(1-\beta_j\frac{1-s}{1+i(1-s)}\right)
&= \prod_{i=0}^{n-1}
\frac{i+(1-s)^{-1}-\beta_j}{i+(1-s)^{-1}}\\
&= \frac{\Gamma(n+(1-s)^{-1}-\beta_j)} {\Gamma(n+(1-s)^{-1})}\frac{\Gamma((1-s)^{-1})} {\Gamma((1-s)^{-1}-\beta_j)}\\
&\sim n^{-\beta_j} \frac{\Gamma((1-s)^{-1})}
{\Gamma((1-s)^{-1}-\beta_j)} \quad\text{as }n\to\infty.
\end{align*}
where the last line follows from Lemma~\ref{lem:Gamma} (see the appendix).
Multiplying over $1\le j\le M$ gives us
\begin{equation}\label{term2}
\prod_{i=0}^{n-1}\prod_{j=1}^M
\left(1-\beta_j\frac{1-s}{1+i(1-s)}\right)
\sim
n^{-D}
\prod_{j=1}^M
\frac{\Gamma((1-s)^{-1})}
{\Gamma((1-s)^{-1}-\beta_j)}.
\end{equation}
Combining \eqref{exact.g}, \eqref{term1} and \eqref{term2}, we obtain the first assertion.

For the upper bound, when $s\in[0,1)$, we have
$$
\frac{\bigl(n-(n-1)s\bigr)^m}
{\bigl((n+1)-ns\bigr)^{m+1}}
=
\frac{1}{1+n(1-s)}
\left(1-\frac{1-s}{1+n(1-s)}\right)^m
\le\frac{1}{1+n(1-s)}.
$$
Using $1-u\le e^{-u}$, we also have
\begin{align*}
\prod_{i=0}^{n-1}\prod_{j=1}^M
\left(1-\beta_j\frac{1-s}{1+i(1-s)}\right)
&\le
\exp\left(-D\sum_{i=0}^{n-1}
\frac{1-s}{1+i(1-s)}\right)\\
&\le
\exp\left(-D\int_0^n
\frac{1-s}{1+t(1-s)}\,dt\right)\\
&=(1+n(1-s))^{-D}.
\end{align*}
Combining these bounds with \eqref{exact.g}, we obtain
$$
G_n(m,s)\le(1+n(1-s))^{-D-1}
\le(1-s)^{-D-1}n^{-D-1}.
$$

Finally, we prove uniform convergence for $s\in\C$ with $|s|=1/2$. It is sufficient to prove that both \eqref{term1} and \eqref{term2} have uniform errors for $|s|=1/2$. For \eqref{term1}, we note that
$$\frac{\bigl(n-(n-1)s\bigr)^m}{\bigl((n+1)-ns\bigr)^{m+1}}n(1-s)=\left(1-\frac{1}{1+n(1-s)}\right)\left(1-\frac{1-s}{1+n(1-s)}\right)^m.$$
Since $|s|=1/2$, we have ${\rm Re}(1-s)\ge1/2$ and $|1-s|\le3/2$. Therefore,
$$\left|\frac{1}{1+n(1-s)}\right|\le\frac{1}{1+n/2}\quad\text{and}\quad\left|\frac{1-s}{1+n(1-s)}\right|\le\frac{3}{n+2}.$$
Thus, for each fixed $m\in\Z_+$,
\begin{align}\label{asym1}
    \sup_{|s|=1/2}\left|\frac{\bigl(n-(n-1)s\bigr)^m}{\bigl((n+1)-ns\bigr)^{m+1}}n(1-s)-1\right|\rightarrow0\quad\text{as }n\to\infty.
\end{align}
Applying Lemma~\ref{lem:Gamma} with $a=(1-s)^{-1}-\beta_j$ and $b=(1-s)^{-1}$, we have, uniformly for $|s|=1/2$,
\begin{align}\label{asym2}
    \frac{\Gamma(n+(1-s)^{-1}-\beta_j)}{\Gamma(n+(1-s)^{-1})}=n^{-\beta_j}(1+O(n^{-1}))\quad\text{as }n\to\infty.
\end{align}
The identity also holds for $\beta_j=0$. 

Since ${\rm Re}((1-s)^{-1})>0$ for $|s|=1/2$, the function
${\Gamma((1-s)^{-1})}/{\Gamma((1-s)^{-1}-\beta_j)}$
is continuous and bounded on $|s|=1/2$. Therefore,
\begin{align}\label{asym3}
    \sup_{|s|=1/2}\left|n^D\prod_{i=0}^{n-1}\prod_{j=1}^M\left(1-\beta_j\frac{1-s}{1+i(1-s)}\right)-\prod_{j=1}^M\frac{\Gamma((1-s)^{-1})}{\Gamma((1-s)^{-1}-\beta_j)}\right|\rightarrow0\quad\text{as }n\to\infty.
\end{align}
Combining this with the uniform convergence in \eqref{asym1} we obtain
$$\sup_{|s|=1/2}|n^{1+D}G_n(m,s)-C(s)|\rightarrow0 \quad\text{as }n\to\infty.$$ 
This completes the proof. 
    
\end{proof}

In order to compare $Y^{(n)}$ and $\widetilde{Y}$,
we need to carefully analyses when $Y^{(n)}$ visits small states in $$S:=\{0,1,2,\cdots, M-2\}.$$

\begin{lemma}\label{lem:small_state2}
Let $U:=\{M-1,M,\dots,2M-1\}\subset S^c$. There exists a constant $\delta_*>0$, depending only on $M$ and $(\alpha_j)_{j=1}^M$, such that for every $0\le k\le n-2$ and every $m\in S$,
$$\mathbb P(Y_{k+1}\in U\mid Y_k=m)\ge\delta_*.$$
\end{lemma}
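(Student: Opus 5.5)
Let $d=\deg(v_\ell)$ with $\ell=n-k-1$. By \eqref{def:Y}, the transition from $Y_k=m$ is the law of $\Phi_\ell(m+1)$. For $m\in S$ we have $r:=m+1\in\{1,\dots,M-1\}$, so Lemma~\ref{lem:F_pgf} (valid only for $m\ge M$) does not apply directly. The plan is instead to find one explicit event, of probability bounded below uniformly in $d$, on which $\Phi_\ell(r)$ lands in $U$. Since $v_\ell$ lies on the infinite path $\varkappa$ and $1\le \ell\le n-1$, we have $d\ge2$.

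\textbf{Step 1: no forward step during the cookie phase.} I use the notation of the proof of Lemma~\ref{lem:F_pgf}. Let $A_\ell^i$ and $B_\ell^i$ be the forward and backward indicators on the $i$-th visit, and set $A_\ell=\sum_i A_\ell^i$ and $B_\ell=\sum_i B_\ell^i$. Let $E=\{A_\ell=0\}$. On the $i$-th visit, the forward probability is
\[
\frac{1}{\lambda^{(i)}_{v_\ell}+d-1}=\frac{1}{(1+\alpha_i)d}\le \frac12 .
\]
The pairs $(A_\ell^i,B_\ell^i)$ are independent in $i$, so $\P(E)\ge 2^{-M}$, uniformly in $d$ and in the $\alpha_i$. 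On $E$ we have $0\le B_\ell\le M$.

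\textbf{Step 2: the decomposition holds on $E$.} On $E$, none of the $r$ forward steps counted by $\Phi_\ell(r)$ happens while cookies remain. All of them therefore occur in the non-excited regime, where the path-restricted steps are fair coin flips driven by the fresh exponentials $\xi(v_\ell,\cdot,k+M+1)$. This gives $\Phi_\ell(r)=B_\ell+H_r$ on $E$. Here $H_r$ is negative binomial, counting failures before the $r$-th success in $\mathrm{Bernoulli}(1/2)$ trials. Moreover $H_r$ is independent of $\sigma(A_\ell^i,B_\ell^i:i\le M)$, because it is built from disjoint exponentials in the construction of $\boldsymbol\eta$.

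I regard this step as the only point needing care, namely justifying $\Phi_\ell(r)=B_\ell+H_{r-A_\ell}$ for $r<M$. The identity from Lemma~\ref{lem:F_pgf} can fail when $A_\ell\ge r$, since the $r$-th forward step may occur mid-cookie-phase. Restricting to $E$ removes exactly this issue.

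\textbf{Step 3: landing in $U$.} Condition on $B_\ell=b\in\{0,\dots,M\}$. Then $\Phi_\ell(r)\in U$ as soon as $H_r\in[M-1-b,\,2M-1-b]$. This interval always contains $M-1$, because $b\ge 0$ and $b\le M$. Hence
\[
\P(\Phi_\ell(r)\in U)\ \ge\ \P(E)\,\P(H_r=M-1) .
\]
Computing the negative binomial probability,
\[
\P(H_r=M-1)=\binom{r+M-2}{M-1}2^{-(r+M-1)}\ \ge\ 2^{-(2M-2)} ,
\]
using $r\le M-1$. Therefore $\delta_*=2^{-3M+2}$ works. This constant depends only on $M$, and in particular not on the degrees along the path or on $k$ and $n$, which gives the claim.
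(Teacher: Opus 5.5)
Your proposal is correct and is essentially the paper's proof. You restrict to the event that none of the $M$ cookies at $v_{n-k-1}$ prescribes a forward step, which has probability at least $2^{-M}$; on it you use $\Phi(m+1)=B+H_{m+1}$ with $H_{m+1}$ independent of the cookie phase, and force $H_{m+1}=M-1$ to land in $U$, giving $\delta_*=2^{-(3M-2)}$. Your remark that the identity $\Phi=B+H_{r-A}$ need not hold for $r<M$ when $A\ge r$ is well taken (the paper states it without qualification), and restricting to $\{A=0\}$ is exactly what makes the argument valid in both versions.
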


\begin{proof}
By \eqref{def:Y}, conditionally on $\{Y_k=m\}$, $Y_{k+1}$ has the same law as $\Phi_{n-k-1}(m+1)=B_{n-k-1}+H_{m+1-A_{n-k-1}},$ where we recall from the proof of Lemma~\ref{lem:F_pgf} that $A_{k}$ and $B_{k}$ are the number of forward steps from $v_k$ to $v_{k+1}$ and backward steps from $v_k$ to $v_{k-1}$, respectively, prescribed by the first $M$ cookies at $v_{k}$.  In particular,
$$A_{k}=\sum_{j=1}^M A_{k}^j,$$
where, for $1\le j\le M$, $A_{k}^j$ is the indicator that the $j$-th cookie at $v_k$ prescribes a forward step from $v_{k}$ to $v_{k+1}$.
For each $1\le j\le M$, we have
$$\mathbb P(A_{k}^j=1)=\frac{1}{\lambda_{v_{k}}^{(j)}+\deg(v_{k})-1}=\frac{1}{(1+\alpha_j)\deg(v_{k})}.$$
Since $\alpha_j\ge0$ and $\deg(v_{n-k-1})\ge2$, we have
$\mathbb P(A_{n-k-1}^j=0)\ge 1/2.$ The random variables $(A_{n-k-1}^j)_{1\le j\le M}$ are independent. Therefore
$$\mathbb P(A_{n-k-1}=0)\ge2^{-M}.$$
On the event $\{A_{n-k-1}=0\}$, we have
$$\Phi_{n-k-1}(m+1)=B_{n-k-1}+H_{m+1}.$$
Moreover, $0\le B_{n-k-1}\le M$. The event $\{H_{m+1}=M-1\}$ has probability bounded below uniformly over $m\in S$. Indeed, one way for this event to occur is that the first $M-1$ trials are failures and the next $m+1$ trials are successes. This event has probability $2^{-(M-1)}2^{-(m+1)}=2^{-(M+m)}\ge2^{-(2M-2)}$, as $m\le M-2$.
The random variable $H_{m+1}$ is independent of $(A_{n-k-1},B_{n-k-1})$. Hence
$$\mathbb P(A_{n-k-1}=0,\ H_{m+1}=M-1)\ge2^{-M}2^{-(2M-2)}=2^{-(3M-2)}.$$

On the event $\{A_{n-k-1}=0,\ H_{m+1}=M-1\}$, we have $\Phi_{n-k-1}(m+1)=B_{n-k-1}+M-1$. Since $0\le B_{n-k-1}\le M$, on this event we have
$$M-1\le\Phi_{n-k-1}(m+1)\le2M-1,$$
and thus $\Phi_{n-k-1}(m+1)\in U$. Therefore, for every $0\le k\le n-2$ and every $m\in S$,
$$\mathbb P(Y_{k+1}\in U\mid Y_k=m)=\mathbb P(\Phi_{n-k-1}(m+1)\in U)\ge2^{-(3M-2)}:=\delta_*.$$

\end{proof}

Let
$$\widetilde T_S:=\inf\{k\ge0:\widetilde Y_k\in S\}\quad\text{and}\quad T_S^+:=\inf\{1\le k\le n-1:Y_k\in S\}$$
be respectively the first entrance time of $\widetilde Y$ to $S$ and the first return time of $Y$ to $S$ before time $n$.

\begin{lemma}\label{lem:escape} Assume that $D>0$.
For every $x\in S^c$, we have
$$
\widetilde{\mathbb P}_x(\widetilde T_S=\infty)>0.
$$
\end{lemma}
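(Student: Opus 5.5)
The plan is to show that, when $D>0$, the homogeneous chain $\widetilde Y$ visits $S$ only finitely often almost surely. We then upgrade this to a positive probability of never entering $S$, using the fact that one-step transitions have full support. If $M=1$ then $S=\emptyset$ and there is nothing to prove, so assume $M\ge2$.

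\emph{Step 1: finite expected number of visits to $S$.} Fix $x\in\Z_+$ and $y\in S$. Since $\widetilde{\mathbb P}_x(\widetilde Y_n=y)(1/2)^y\le G_n(x,1/2)$, the uniform bound of Lemma~\ref{lem:G.asym} with $s=1/2$ gives
$$\widetilde{\mathbb P}_x(\widetilde Y_n=y)\le 2^{y}\,G_n(x,1/2)\le 2^{y+D+1}n^{-D-1}.$$
Because $D>0$, this is summable in $n$. Summing over the finitely many $y\in S$ shows that $\widetilde{\mathbb E}_x\big[\sum_{n\ge0}\mathds 1_{\{\widetilde Y_n\in S\}}\big]<\infty$. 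Hence $\widetilde Y$ visits $S$ only finitely many times, $\widetilde{\mathbb P}_x$-a.s.

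\emph{Step 2: some state escapes.} Fix $x\in S^c$. By Step 1, $\widetilde{\mathbb P}_x$-a.s. there is a last time $k$ with $\widetilde Y_k\in S$ (or no such time). In either case there exist $k\ge0$ and $z\in S^c$ such that
$$\widetilde{\mathbb P}_x\big(\widetilde Y_k=z,\ \widetilde Y_j\notin S\ \forall j\ge k\big)>0.$$
Otherwise a countable union of null events would have full measure. By the Markov property at time $k$ this probability equals $\widetilde{\mathbb P}_x(\widetilde Y_k=z)\,\widetilde{\mathbb P}_z(\widetilde T_S=\infty)$, so $\widetilde{\mathbb P}_z(\widetilde T_S=\infty)>0$ for this particular $z\in S^c$.

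\emph{Step 3: full support of one-step transitions.} By Definition~\ref{def:Ytilde}, given $\widetilde Y_k=m$, the variable $\widetilde Y_{k+1}$ has the law of an independent sum of Bernoulli$(\beta_j)$ variables, $1\le j\le M$, and a negative binomial variable counting failures before the $(m+1)$-st success in fair coin flips. Indeed, $1-\beta_j(1-s)=(1-\beta_j)+\beta_j s$ and $(2-s)^{-(m+1)}$ is that negative binomial generating function. Since $\beta_j<1$, all Bernoulli variables vanish simultaneously with probability $\prod_j(1-\beta_j)>0$. The negative binomial variable charges every point of $\Z_+$. Hence $\widetilde{\mathbb P}_m(\widetilde Y_1=z)>0$ for all $m,z\in\Z_+$.

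Now take an arbitrary $x\in S^c$ and the state $z\in S^c$ from Step 2. Since $x\notin S$, on $\{\widetilde Y_1=z\}$ the chain has not entered $S$ at times $0$ or $1$. The Markov property then gives
$$\widetilde{\mathbb P}_x(\widetilde T_S=\infty)\ge\widetilde{\mathbb P}_x(\widetilde Y_1=z)\,\widetilde{\mathbb P}_z(\widetilde T_S=\infty)>0.$$
I expect the only delicate point to be Step 1, namely the summability. This is exactly where $D>0$ enters, through the $n^{-D-1}$ bound of Lemma~\ref{lem:G.asym}. The remaining steps are soft Markov-chain arguments.
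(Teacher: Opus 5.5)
Your proof is correct, but it takes a different route from the paper.

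The paper uses a Lyapunov function. It takes $V_n=(\widetilde Y_n+a)^{-q}$ with $q\in(0,D)$ and shows that $V$ is a supermartingale while $\widetilde Y$ stays above a large level $R$. This step relies on the integral representation of $(y+a)^{-q}$ and the Taylor and moment estimates of Lemma~\ref{lem:lypint}. Optional stopping then gives $\widetilde{\mathbb P}_m(\widetilde T_S<\infty)\le\bigl((R+a)/(m+a)\bigr)^q<1$ for $m>R$. The states $x\in S^c$ with $x\le R$ are handled by the same one-step positivity as in your Step~3.

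You avoid the appendix lemma entirely. You use the uniform bound $G_n(m,s)\le(1+n(1-s))^{-D-1}$ of Lemma~\ref{lem:G.asym}, which follows directly from the closed form \eqref{exact.g}; it is the same estimate the paper later records as \eqref{eq:L.sbound}. This gives a finite expected number of visits to $S$, and a soft last-exit argument finishes. Your route is shorter and shows exactly where $D>0$ enters, namely summability of $n^{-D-1}$. Nothing is lost downstream, since only positivity of the escape probability is used later, through $\delta=\delta_*\min_{x\in U}\widetilde{\mathbb P}_x(\widetilde T_S=\infty)$ in Lemma~\ref{lem:ruinhomo} and Proposition~\ref{lem:genruin}.

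The paper's method buys two things. It gives an explicit escape bound that tends to $1$ as the starting state grows. It also uses only the drift and variance of the one-step law at large states, so it would survive perturbations without an exact generating function, whereas your argument needs \eqref{exact.g}.

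Within this setting your approach can recover the quantitative statement too. For $x\in S^c$, bound $\widetilde{\mathbb P}_x(\widetilde T_S<\infty)\le 2^{M-2}\sum_{n\ge1}G_n(x,1/2)$. Each term tends to $0$ as $x\to\infty$ because of the factor $\bigl(1-\frac{1}{n+2}\bigr)^x$ in \eqref{exact.g}, and the terms are dominated by $2^{M+D-1}n^{-D-1}$. Dominated convergence then gives escape probability tending to $1$, which would also make your Step~2 unnecessary.
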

\begin{proof}
Fix $a\ge1$ and $q\in(0,D)$, and define $V_n:=(\widetilde Y_n+a)^{-q}$. Fix some $R\ge M-1$ to be chosen sufficiently large, and let
$$\zeta_R:=\inf\{n\ge0:\widetilde Y_n\le R\}.$$
Using the identity $\frac{1}{(x+a)^q}=\frac{1}{\Gamma(q)}\int_0^\infty t^{q-1}e^{-t(x+a)}\rmd t$, we can write
$$\E[V_{n+1}\mid\widetilde Y_n=y]=\frac{1}{\Gamma(q)}\int_0^\infty t^{q-1}e^{-at}\E[e^{-t\widetilde Y_{n+1}}\mid\widetilde Y_n=y]\rmd t.$$
Recall from Definition~\ref{def:Ytilde} that
$$\mathbb E\!\left[s^{\widetilde Y_{n+1}}\,\middle|\,\widetilde Y_n=m\right]=(2-s)^{-(m+1)}\prod_{j=1}^M\bigl(1-\beta_j(1-s)\bigr).$$
Evaluating this at $s=e^{-t}$ gives us
$$\E[V_{n+1}\mid\widetilde Y_n=y]=\frac{1}{\Gamma(q)}\int_0^\infty t^{q-1}e^{-at}(2-e^{-t})^{-(y+1)}\prod_{j=1}^M(1-\beta_j(1-e^{-t}))\rmd t.$$

By Lemma \ref{lem:lypint} (see the appendix), 
we may choose $R\ge M-1$ sufficiently large such that, for every $y>R$,
$$\E[V_{n+1}\mid\widetilde Y_n=y]\le(y+a)^{-q}.$$
Hence,
$$\E[V_{n+1}-V_n\mid\widetilde Y_n=y]\le0,\quad y>R.$$

We have shown that $(V_{n\wedge\zeta_R})_{n\ge0}$ is a non-negative supermartingale. Setting $m>R$ and using the optional stopping theorem, we get
$$\E_m[V_{n\wedge\zeta_R}]\le\E_m[V_0]=\left(\frac{1}{m+a}\right)^q.$$
Note that on $\{\zeta_R\le n\}$, we have $V_{n\wedge\zeta_R}=V_{\zeta_R}\ge\left(\frac{1}{R+a}\right)^q$, so
$$\left(\frac{1}{R+a}\right)^q\P_m(\zeta_R\le n)\le\E_m[V_{n\wedge\zeta_R}]\le\E_m[V_0]=\left(\frac{1}{m+a}\right)^q.$$
Rearranging this gives
$$\P_m(\zeta_R\le n)\le\left(\frac{R+a}{m+a}\right)^q.$$
Taking $n$ to infinity and using the continuity of probability measures gives us
$$\widetilde{\mathbb P}_m(\widetilde T_S<\infty)\le\widetilde{\mathbb P}_m(\zeta_R<\infty)\le\left(\frac{R+a}{m+a}\right)^q<1,$$
where the first inequality holds since $R\ge M-1$ and $S=\{0,\ldots,M-2\}$. In other words, when $m>R$, we have
$$\widetilde{\mathbb P}_m(\widetilde T_S=\infty)>0.$$
To extend this to $x\in S^c$ with $x\le R$, choose $m>R$. By the transition law of $\widetilde Y$, we have $\widetilde{\mathbb P}_x(\widetilde Y_1=m)>0$. Using the Markov property, we obtain
$$\widetilde{\mathbb P}_x(\widetilde T_S=\infty)\ge\widetilde{\mathbb P}_x(\widetilde Y_1=m)\widetilde{\mathbb P}_m(\widetilde T_S=\infty)>0,$$
completing the proof for all initial states.
\end{proof}

For $i\ge 0$, $j\in S$ and $n\ge0$, set
$$
L_j^{(i)}(n):=\widetilde{\mathbb P}_i(\widetilde Y_n=j).
$$
In particular, $L_j^{(i)}(0)=\mathds1_{\{i=j\}}$. Also define
$$
\widehat L_{0,j}(z):=\sum_{n=1}^{\infty}L_j^{(0)}(n)z^n
\quad\text{for }|z|<1.
$$

\begin{lemma}\label{prop:first-hit}
Assume that $M\ge2$ and $D>0$. There exists a constant $C<\infty$, depending only on $M$ and $(\alpha_k)_{1\le k\le M}$, such that
\begin{equation}\label{eq:L.sbound}
\sum_{j\in S}L_j^{(i)}(n)\le C(n+1)^{-D-1}
\quad\text{for every }i\ge 0,\ n\ge0.
\end{equation}
For each $j\in S$, there exists $\ell_j\ge0$ such that
\begin{equation}\label{eq:Lasym}
L_j^{(0)}(n)\sim \ell_j n^{-D-1}
\quad\text{as }n\rightarrow\infty,
\end{equation}
where
$\ell_0=\prod_{i=1}^M\frac{1}{\Gamma(1-\beta_i)}>0.$
Moreover,
\begin{equation}\label{eq:L.taillim}
\sum_{m=n}^{\infty}L_j^{(0)}(m)\sim \frac{\ell_j}{D} n^{-D}
\quad\text{as }n\rightarrow\infty.
\end{equation}
\end{lemma}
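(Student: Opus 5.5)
Everything will be extracted from the explicit generating function of Lemma~\ref{lem.G}, viewed as a function of $s$. Since $G_n(i,s)=\sum_{j\ge0}L_j^{(i)}(n)s^j$ for $|s|\le1$, each $L_j^{(i)}(n)$ with $j\in S$ is a Taylor coefficient. We recover it by Cauchy's formula on the circle $|s|=1/2$:
$$
L_j^{(i)}(n)=\frac{1}{2\pi \mathrm{i}}\oint_{|s|=1/2}\frac{G_n(i,s)}{s^{j+1}}\rmd s .
$$

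\emph{Asymptotics \eqref{eq:Lasym}.} Take $i=0$. The last part of Lemma~\ref{lem:G.asym} gives $G_n(0,s)=n^{-1-D}(C(s)+\epsilon_n(0,s))$ with $\epsilon_n\to0$ uniformly on $|s|=1/2$. Insert this into the Cauchy formula. The error term contributes $o(n^{-1-D})$, because the circle has finite length and $|s|^{-j-1}=2^{j+1}$ there. Hence $L_j^{(0)}(n)\sim \ell_j n^{-D-1}$, where $\ell_j$ is the $j$-th Taylor coefficient of $C(s)$ at $0$. This is legitimate because $C$ is analytic on $|s|<1$: there $\mathrm{Re}(1-s)^{-1}>1/2$, so all Gamma arguments avoid the poles, and $1/\Gamma$ is entire. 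In particular
$$
\ell_0=C(0)=\prod_{i=1}^M\frac{\Gamma(1)}{\Gamma(1-\beta_i)}=\prod_{i=1}^M\frac{1}{\Gamma(1-\beta_i)}>0,
$$
since $\beta_i<1$. Nonnegativity of each $\ell_j$ follows because it is a limit of nonnegative quantities.

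\emph{Uniform bound \eqref{eq:L.sbound}.} This is the main obstacle, because the constant must be uniform in the starting state $i$, while the asymptotics in Lemma~\ref{lem:G.asym} are only for fixed $m$. The real-$s$ bound of Lemma~\ref{lem:G.asym} is uniform in $m$, and I would use it. Set $s=1/2$. For every $i\ge0$ and $n\ge1$,
$$
2^{-(M-2)}\sum_{j\in S}L_j^{(i)}(n)\le \sum_{j\in S}L_j^{(i)}(n)\,2^{-j}\le G_n(i,1/2)\le (1+n/2)^{-D-1}\le 2^{D+1}(n+1)^{-D-1}.
$$
So we may take $C=2^{M-2+D+1}$. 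The case $n=0$ is trivial, since the sum is at most $1$. Note that this step only needs the direct bound $G_n(m,s)\le(1+n(1-s))^{-D-1}$, whose derivation in Lemma~\ref{lem:G.asym} never used that $m$ is fixed. Because of that, the uniformity in $i$ is in fact harmless.

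\emph{Tail \eqref{eq:L.taillim}.} This follows from \eqref{eq:Lasym} by comparison with $\sum_{m\ge n}m^{-D-1}\sim n^{-D}/D$. Given $\varepsilon>0$, the terms $L_j^{(0)}(m)$ lie between $(\ell_j\pm\varepsilon)m^{-D-1}$ for all large $m$, and we sum these bounds. Here $D>0$ is what makes the series converge. When $\ell_j=0$, the same argument shows the tail is $o(n^{-D})$, which is consistent with the statement. The hypothesis $M\ge2$ only ensures that $S\neq\emptyset$.
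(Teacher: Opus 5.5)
Your proposal is correct and follows the paper's proof essentially step by step. You get the uniform bound \eqref{eq:L.sbound} from the real-$s$ estimate of Lemma~\ref{lem:G.asym} (uniform in the initial state), the asymptotics \eqref{eq:Lasym} from Cauchy's formula on $|s|=1/2$ with the uniform error term, and the tail \eqref{eq:L.taillim} by summing those asymptotics; the only differences are cosmetic, namely your explicit choice $s=1/2$ with an explicit constant and your identification of $\ell_j$ as the $j$-th Taylor coefficient of the analytic function $C$.
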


\begin{proof}
For any fixed $s\in(0,1)$, we have
$$
s^{M-2}\sum_{j\in S}L_j^{(i)}(n)
\le\sum_{j\in S}L_j^{(i)}(n)s^j
\le G_n(i,s).
$$
By Lemma~\ref{lem:G.asym}, we have
$$
G_n(i,s)\le(1+n(1-s))^{-D-1}
\quad\text{for every }i\ge 0,\ n\ge1.
$$
Also note that $\sum_{j\in S}L_j^{(i)}(0)\le1$. We therefore obtain \eqref{eq:L.sbound} with some sufficiently large constant $C$.

For $\mathcal C=\{s:|s|=1/2\}$, applying Cauchy's integral formula, we have
$$
L_j^{(0)}(n)=\frac{1}{2\pi i}\oint_{\mathcal C}\frac{G_n(0,s)}{s^{j+1}}\rmd s.
$$
By Lemma~\ref{lem:G.asym},
$$
G_n(0,s)=n^{-D-1}\bigl(C(s)+\epsilon_n(0,s)\bigr),
\quad
\sup_{s\in\mathcal C}|\epsilon_n(0,s)|\rightarrow0.
$$
Since
$$
\left|\frac{1}{2\pi i}\oint_{\mathcal C}\frac{\epsilon_n(0,s)}{s^{j+1}}\rmd s\right|
\le2^j\sup_{s\in\mathcal C}|\epsilon_n(0,s)|\rightarrow0,
$$
we obtain \eqref{eq:Lasym}, with
$$
\ell_j=\frac{1}{2\pi i}\oint_{\mathcal C}\frac{C(s)}{s^{j+1}}\rmd s.
$$
Each $\ell_j$ is non-negative as $L_j^{(0)}(n)\ge0$. In particular,
$$
\ell_0=C(0)=\prod_{i=1}^M\frac{1}{\Gamma(1-\beta_i)}>0.
$$
Finally, since $n^D\sum_{m=n}^{\infty}m^{-D-1}\rightarrow1/D$, by \eqref{eq:Lasym} we obtain \eqref{eq:L.taillim}.
\end{proof}

\subsection{Ruin probability}\label{sec:ruin}

Fix an infinite self-avoiding path $\varkappa=(v_k)_{k\ge 0}$ with $v_0=\rho$. Recall that $r_n$ is the ruin probability that $\X^{(v_n)}$ reaches $v_n$ before returning to $\rho$, and that $(Y_k^{(n)})_{0\le k\le n-1}$ is the inhomogeneous Markov chain defined by \eqref{def:Y}. By \eqref{def:ruin}, we have
$$r_n=P_0(Y_{n-1}^{(n)}=0).$$
In this subsection we prove the following exact asymptotic formula of the ruin probability $r_n$ as $n\to\infty$.

\begin{proposition}\label{thm:ruin_prob}
     There exists a constant $c_*(\varkappa)\in (0,\infty)$ depend only on the ray $\varkappa$ such that
    $$
r_n= \mathbb P_0(Y_{n-1}^{(n)}=0)\sim c_*(\varkappa) n^{-(D+1)} \quad {\rm as  }\ n\to\infty. 
$$
Furthermore, there exists a constant $C\in [1,\infty)$ depending only on $M$ and $(\alpha_k)_{1\le k\le M}$  such that 
\begin{equation}\label{eq:ruin.uniform}
C^{-1}n^{-D-1}
\le r_n
\le Cn^{-D-1}.
\end{equation} uniformly for all ray $\varkappa$. 
\end{proposition}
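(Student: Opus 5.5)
Since the time-$(n-1)$ law of $Y^{(n)}$ depends on the path through $\Phi_{n-1},\dots,\Phi_1$, I will run the chain in reverse time. Set $x_m:=Y^{(n)}_m$. The chain $Y^{(n)}$ agrees in transition law with $\widetilde Y$ at every state $\ge M-1$ (Lemma~\ref{lem:F_pgf} and Definition~\ref{def:Ytilde}). The only inhomogeneity therefore comes from visits to the finite set $S=\{0,\dots,M-2\}$, and there the transition kernel depends on the vertex $v_{n-m-1}$ only through $\deg(v_{n-m-1})$. The plan is to decompose $\P_0(Y^{(n)}_{n-1}=0)$ according to the last visit of $Y^{(n)}$ to $S$ before time $n-1$, together with the excursions above $S$ (Lemmas~\ref{lem:ruinhomo}, \ref{lem:ruincomp} and~\ref{lem:monotone} in the dependency diagram). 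If $M=1$, then $S=\emptyset$ and $Y^{(n)}\overset{d}{=}\widetilde Y$ exactly, so $r_n=L^{(0)}_0(n-1)\sim \ell_0 n^{-D-1}$ by Lemma~\ref{prop:first-hit}; this case is immediate. The case $D=0$ is the simple random walk and is also trivial ($r_n=1/n$ up to the root factor). I will therefore assume $M\ge2$ and $D>0$.

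\textbf{Step 1 (homogeneous comparison).} Write $T$ for the last time $k\le n-1$ with $Y^{(n)}_k\in S$. Note that $T$ exists and equals $n-1$ on $\{Y^{(n)}_{n-1}=0\}$, so I use instead the last time before $n-1$ at which the chain sits in $S$ and then jumps. The cleaner decomposition is the following. Either $Y^{(n)}$ reaches $0$ at time $n-1$ after a long excursion in $S^c$, during which it evolves exactly as $\widetilde Y$; or it stays in $S$ for a long stretch. By the strong Markov property,
\[
r_n=\sum_{k}\sum_{x\in S}\P_0(Y^{(n)}_k=x)\,\P\big(\text{from }x\text{ at time }k,\ \text{exit to }S^c,\ \text{evolve as }\widetilde Y,\ \text{hit }0\text{ at time }n-1\big).
\]
Each summand is expressed through the kernels $\widetilde{\P}_y(\widetilde Y_{t}=j,\ \widetilde T_S\ge t)$ with $y\in U$. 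These kernels are bounded above by $L^{(y)}_j(t)\le C(t+1)^{-D-1}$ from \eqref{eq:L.sbound}, and their precise asymptotics follow from \eqref{eq:Lasym} via a last-exit decomposition.

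\textbf{Step 2 (few visits to $S$).} I then show that the number of times $Y^{(n)}$ spends in $S$ has a geometric tail, uniformly in the ray. By Lemma~\ref{lem:small_state2}, from any state of $S$ the chain enters $U$ with probability at least $\delta_*$. By Lemma~\ref{lem:escape} and the compactness of $U$, from $U$ the chain avoids $S$ forever with probability at least some $\epsilon>0$; this bound transfers to $Y^{(n)}$ because the two chains coincide off $S$. Combining these, each visit to $S$ is the last with probability at least $\delta_*\epsilon$.

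\textbf{Step 3 (sandwich and assembly).} A sum of the type $\sum_k a_k\,(n-k)^{-D-1}$, with $a_k$ summable and geometrically decaying, is $\asymp n^{-D-1}$ uniformly over rays. This gives \eqref{eq:ruin.uniform}. The lower bound comes from the single path $0\to U\to$ (stay in $S^c$) $\to 0$, using $L^{(0)}_0(n)\sim\ell_0n^{-D-1}$ with $\ell_0>0$ together with $\delta_*$. For the exact asymptotics, note that $\deg(v_k)$ for small $k$ (the far end of the reversed chain) enters only through finitely many initial terms. Dominated convergence in the last-exit decomposition then gives $n^{D+1}r_n\to c_*(\varkappa)$, where $c_*(\varkappa)$ is an explicit convergent series.

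\textbf{Main obstacle.} The delicate point is the time reversal. The inhomogeneity near the \emph{end} of the chain, at the vertices $v_1,v_2,\dots$, is fixed as $n\to\infty$, whereas the start of the chain is at $v_{n-1}$, which moves with $n$. I will first use Step~2 to show that, with error $o(n^{-D-1})$, the chain makes no visit to $S$ during the middle time window $[K,n-1-K]$. Then I will use monotonicity in the starting state (Lemma~\ref{lem:monotone}) and the exact $\widetilde Y$ tail asymptotics to show that the contribution of the initial segment converges uniformly in $n$. Obtaining this convergence uniformly is the step I expect to require the most care.
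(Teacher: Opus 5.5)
\textbf{Verdict: genuine gaps.} The proposal does not yet prove either the uniform bounds \eqref{eq:ruin.uniform} or the exact asymptotic.

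\textbf{The uniform bounds.} Step~3 rests on the claim that the weights $a_k:=\sum_{x\in S}\mathbb P_0(Y^{(n)}_k=x)$ are ``summable and geometrically decaying''. This is false.
\begin{itemize}
\item Step~2 gives a geometric tail for the \emph{number} of visits to $S$. That only yields $\sum_k a_k\le(\delta_*\epsilon)^{-1}$.
\item The visits are separated by excursions with polynomial tails. By Lemma~\ref{lem:ruincomp}, $Y^{(n)}$ is stochastically dominated by $\widetilde Y$, so $a_k\ge\widetilde{\mathbb P}_0(\widetilde Y_k=0)=G_k(0,0)\sim C(0)k^{-D-1}$.
\item With summability alone, the part $\sum_{n/2<k\le n-2}a_k(n-k)^{-D-1}$ is only $o(1)$, not $O(n^{-D-1})$.
\end{itemize}
You need the pointwise bound $a_k\le C(k+1)^{-D-1}$ uniformly over rays, and that requires a renewal induction of the kind in Lemma~\ref{lem:ruinhomo}. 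The same bound is needed for your ``no visit to $S$ in the middle window'' claim.

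The paper avoids the decomposition for these bounds. Lemma~\ref{lem:ruincomp} and monotonicity in the starting state give $G_{n-1}(0,0)\le r_n\le Q_2^{n-1}(0,0)$. Then Lemmas~\ref{lem:ruinhomo} and~\ref{lem:G.asym} give both sides of \eqref{eq:ruin.uniform}.

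Your single-path lower bound also has an unproved step. It needs $\min_{y\in U}\widetilde{\mathbb P}_y(\widetilde Y_{n-2}=0,\ \widetilde T_S\ge n-2)\ge cn^{-D-1}$ for the chain killed on entering $S$. This does not follow from $L^{(0)}_0(n)\sim\ell_0n^{-D-1}$, which is an unrestricted probability from $0$.

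\textbf{The exact asymptotic.} You correctly locate the difficulty: the first kernels $Q_{\deg(v_{n-1})},\dots,Q_{\deg(v_{n-K})}$ change with $n$. But then ``dominated convergence in the last-exit decomposition'' has nothing to act on. The law of the initial segment is driven by an arbitrary degree sequence and has no limit as $n\to\infty$. What is missing is a reason why the limiting constant forgets the initial data.

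The paper supplies this as follows.
\begin{itemize}
\item It writes $P_n(i,0)=\sum_x\widetilde Q(i,x)P_{n-1}(x,0)+\Delta_n(i)$ and iterates to get $P_n(0,0)=L_0^{(0)}(n)+\sum_{m=1}^n\sum_{j\in S}L_j^{(0)}(n-m)\Delta_m(j)$. Now the start of the chain is homogeneous, and each $\Delta_m$ depends only on $v_1,\dots,v_m$, so these are fixed sequences.
\item Even so, the first-step correction $\Delta_n(0)$ is a priori only $O(n^{-D-1})$, so the pointwise limit cannot be read off directly.
\item It proves the averaged estimate $n^D\sum_{m\ge n}\Delta_m(i)\to0$, using the decrement inequality \eqref{eq:ruin.decrement}, which relies on monotonicity in the starting state.
\item Generating functions then give $n^D\sum_{m\ge n}P_m(0,0)\to c_*(\varkappa)/D$.
\item Finally, Theorem~\ref{thm:karamata} converts this to $r_n\sim c_*(\varkappa)n^{-D-1}$. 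This needs $r_n$ to be monotone in $n$, which is exactly what Lemma~\ref{lem:monotone} states (not monotonicity in the starting state, as your proposal says).
\end{itemize}
Your plan has no substitute for this averaged-to-pointwise step. Separately, you assert exact asymptotics for the killed kernels $\widetilde{\mathbb P}_y(\widetilde Y_t=j,\ \widetilde T_S\ge t)$ ``from \eqref{eq:Lasym} via a last-exit decomposition''. Getting them would require inverting a matrix renewal equation with polynomially decaying coefficients, which is not available from the lemmas.

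\textbf{Minor.} For $M=1$, Lemma~\ref{prop:first-hit} is stated only for $M\ge2$. Use Lemma~\ref{lem:G.asym} instead: $r_n=G_{n-1}(0,0)\sim C(0)n^{-D-1}$.
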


We first prove the monotonicity of the ruin probabilities. 

\begin{lemma}\label{lem:monotone}
 The probability $r_n=\mathbb P_0(Y_{n-1}^{(n)}=0)$ is eventually non-increasing in $n$.
\end{lemma}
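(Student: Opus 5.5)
The plan is a pathwise coupling through the recursion \eqref{eq:B.recursion}. It should give the stronger statement that $r_{n+1}\le r_n$ for every $n\ge1$, so "eventually non-increasing" follows a fortiori. The point to exploit is that the random functions $\Phi_k$ in \eqref{def.Phi} are built from the Bernoulli sequences $(\eta_{v_k}(j))_{j\ge1}$. These sequences are defined from $\boldsymbol\xi$ along the fixed ray $\varkappa$, and do not depend on the length $n$ of the truncated path $\mathcal P_{v_n}$. Hence the families $(B_n(k))_{1\le k\le n}$ and $(B_{n+1}(k))_{1\le k\le n+1}$ are realised on the same probability space through the same functions $\Phi_1,\dots,\Phi_n$.

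First, I would note that each $\Phi_k$ is pathwise non-decreasing in $m$. Indeed, $\sum_{j=1}^{\ell+m}\eta_{v_k}(j)$ is non-decreasing in $\ell$ and increases by at most one per step. So the index of the $m$-th success, $\Phi_k(m)+m$, increases by at least one when $m$ is replaced by $m+1$, and therefore $\Phi_k(m+1)\ge\Phi_k(m)$. If $\eta_{v_k}$ has only finitely many ones, $\Phi_k(m)=\infty$ from some $m$ on. This case is also monotone, and it has probability zero anyway, since after the cookies are exhausted the steps are fair coin flips.

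Second, I would compare the two chains by downward induction on $k$. We have $B_n(n)=0$ and $B_{n+1}(n)=\Phi_n(B_{n+1}(n+1)+1)=\Phi_n(1)\ge0=B_n(n)$. If $B_{n+1}(k+1)\ge B_n(k+1)$ for some $1\le k\le n-1$, then by \eqref{eq:B.recursion} and monotonicity,
$$B_{n+1}(k)=\Phi_k\bigl(B_{n+1}(k+1)+1\bigr)\ge\Phi_k\bigl(B_n(k+1)+1\bigr)=B_n(k).$$
Thus $B_{n+1}(1)\ge B_n(1)$ almost surely, so $\{B_{n+1}(1)=0\}\subseteq\{B_n(1)=0\}$ and $r_{n+1}\le r_n$ by \eqref{def:ruin}.

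The step that needs care is justifying that \eqref{eq:B.recursion} holds for both $n$ and $n+1$ with the same $\Phi_k$, $1\le k\le n$. One concern is the vertex $v_n$: it is an endpoint of $\mathcal P_{v_n}$ but an interior vertex of $\mathcal P_{v_{n+1}}$. This is harmless, because $\Phi_n$ enters only the $(n+1)$-chain, and the $n$-chain uses only $\Phi_1,\dots,\Phi_{n-1}$. For $k\le n-1$, $\eta_{v_k}$ is defined from $\boldsymbol\xi$ at $v_k$ using the neighbours $v_{k\pm1}$, which are the same in both paths. The backward-step interpretation of $\Phi_k$ also holds up to the first hitting time of the endpoint, via the displayed identity relating $\eta_x(j)$ to steps of $\X^{(v_n)}$ for interior $x$. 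This identification is exactly what the construction in Section~\ref{sec:markov} provides, so no further input is needed.

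Alternatively, the same conclusion can be phrased directly for the extensions. Before its first visit to $v_n$, the process $\X^{(v_{n+1})}$ uses exactly the same exponential clocks and the same path-neighbour sets as $\X^{(v_n)}$. Hence the event of reaching $v_n$ before returning to $\rho$ coincides for the two processes, and reaching $v_{n+1}$ first requires reaching $v_n$ first.
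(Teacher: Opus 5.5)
Your proposal is correct and is essentially the paper's proof: the same coupling through the shared random functions $\Phi_k$, the base case $B_{n+1}(n)=\Phi_n(1)\ge 0=B_n(n)$, and downward induction via the monotonicity of $\Phi_k$ in \eqref{eq:B.recursion}, which gives $r_{n+1}\le r_n$ for every $n\ge1$. Your explicit check that $\Phi_k$ is non-decreasing and your alternative argument, that $\X^{(v_n)}$ and $\X^{(v_{n+1})}$ agree up to the first visit to $v_n$, are both valid additions.
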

\begin{proof}
We use the coupling with the Bernoulli random variables $(\eta_{v_k}(j))_{j\ge1,\ k\ge1}$ defined in Section~\ref{sec:markov} to show that $r_{n+1}\le r_n$ for all $n\ge1$.

Recall that $B_n(n)=0$ and, by \eqref{eq:B.recursion}, we have
$$B_{n+1}(n)=\Phi_n(1)\ge0=B_n(n).$$
Since each $\Phi_k$ is non-decreasing, we obtain successively, for $k=n-1,\dots,1$,
$$B_{n+1}(k)=\Phi_k(B_{n+1}(k+1)+1)\ge\Phi_k(B_n(k+1)+1)=B_n(k).$$
By \eqref{eq:comp}, we therefore have
$$r_{n+1}=\mathbb P(B_{n+1}(1)=0)\le\mathbb P(B_n(1)=0)=r_n.$$
\end{proof}

For each integer $d\ge2$, let $Q_d$ be the transition matrix in \eqref{def:Y} corresponding to a vertex of degree $d$, with parameters
$\lambda^{(j)}=1+\alpha_jd$. Thus, for $k\ge1$ and $i,j\in\mathbb Z_+$,
$$Q_{\deg(v_k)}(i,j)=\mathbb P\bigl(\Phi_k(i+1)=j\bigr).$$
Also set $\widetilde Q(i,j):=\widetilde{\mathbb P}_i(\widetilde Y_1=j)$.
By Lemma~\ref{lem:F_pgf} and Definition~\ref{def:Ytilde}, we have
\begin{equation}\label{eq:large.state}
Q_d(i,j)=\widetilde Q(i,j) \quad\text{for }d\ge2,\ i\ge M-1,\ j\ge 0.
\end{equation}


\begin{lemma}\label{lem:ruinhomo}
Assume that $M\ge2$ and $D>0$. There exists a constant $C<\infty$, depending only on $M$ and $(\alpha_\ell)_{1\le\ell\le M}$, such that
\begin{equation}\label{eq:Q2bound}
Q_2^n(0,0)\le C(n+1)^{-D-1}
\quad\text{for every }n\ge0.
\end{equation}
\end{lemma}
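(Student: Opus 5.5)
The plan is a renewal decomposition of the path of the $Q_2$-chain according to its successive visits to the small set $S=\{0,\dots,M-2\}$. The two ingredients are the identity \eqref{eq:large.state}, which says $Q_2(i,\cdot)=\widetilde Q(i,\cdot)$ for $i\ge M-1$, and the bound \eqref{eq:L.sbound}. Together they control the law of the time between consecutive visits to $S$. Since $0\in S$, the chain can be at $0$ at time $n$ only at the end of some such cycle. Throughout, write $\widetilde{\mathbb P}^{(\widetilde Q)}_i$ for the law of $\widetilde Y$ started at $i$.

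\textbf{Step 1 (cycle law).} Let $0=\sigma_0<\sigma_1<\sigma_2<\cdots$ be the successive times at which the $Q_2$-chain started at $0$ is in $S$. For $s\in S$ and $m\ge1$, set $a_s(m):=\mathbb P^{Q_2}_s(\text{first return to }S\text{ at time }m)$. If $m=1$, this is at most $1$. If $m\ge2$, the chain first jumps to some $i\in S^c$. Until it returns to $S$ it only uses transitions from states $\ge M-1$, so by \eqref{eq:large.state} it evolves as $\widetilde Y$. Hence
\[
a_s(m)\le \sum_{i\ge M-1}Q_2(s,i)\,\sup_{i'\ge M-1}\sum_{j\in S}L^{(i')}_j(m-1)\le C\,m^{-D-1},
\]
using \eqref{eq:L.sbound}. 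So we may take $a(m):=\sup_{s\in S}a_s(m)\le C(m+1)^{-D-1}$ for all $m\ge1$, after enlarging $C$.

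\textbf{Step 2 (uniform defect).} Next I will show $\theta:=\sup_{s\in S}\sum_m a_s(m)<1$. By Lemma~\ref{lem:small_state2}, from any $s\in S$ the chain enters the finite set $U$ at the next step with probability at least $\delta_*$; the bound there is uniform in the degree, so it applies to $Q_2$. By Lemma~\ref{lem:escape}, each $u\in U$ satisfies $\widetilde{\mathbb P}^{(\widetilde Q)}_u(\widetilde T_S=\infty)>0$. Since $U$ is finite, $p:=\min_{u\in U}\widetilde{\mathbb P}^{(\widetilde Q)}_u(\widetilde T_S=\infty)>0$. After landing in $U$ the $Q_2$-chain behaves like $\widetilde Y$ until it hits $S$. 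Therefore $\theta\le 1-\delta_* p<1$, and $\delta_*,p$ depend only on $M$ and $(\alpha_\ell)$.

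\textbf{Step 3 (convolution bound).} By the strong Markov property at the times $\sigma_k$,
\[
Q_2^n(0,0)\le \mathbb P^{Q_2}_0(n\in\{\sigma_k\}_{k\ge0})\le \mathds 1_{\{n=0\}}+\sum_{N\ge1}\bar a^{*N}(n),
\]
where $\bar a$ is a dominating sequence with $\bar a(m)\le C(m+1)^{-D-1}$ and total mass $\theta$. To get this, I dominate stepwise using the supremum over starting points in $S$: $\sum_s$ is replaced by $\sup_s$ at each cycle. The total-mass bound $\theta$ is used for the cycles other than the longest one, so I will keep mass and pointwise bounds separate when doing so.

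For the $N$-fold convolution, one of the $N$ summands $m_1+\dots+m_N=n$ satisfies $m_i\ge n/N$. Bounding that factor pointwise and the remaining $N-1$ factors by their total mass gives
\[
\bar a^{*N}(n)\le N\theta^{N-1}C\Big(\tfrac nN+1\Big)^{-D-1}\le C N^{D+2}\theta^{N-1}(n+1)^{-D-1}.
\]
Summing over $N$ uses $\theta<1$, which yields \eqref{eq:Q2bound}.

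The main obstacle I expect is Step 2 together with the bookkeeping in Step 3. One has to justify carefully that the comparison with $\widetilde Y$ is valid from the first exit of $S$ up to the next entrance, so that both \eqref{eq:L.sbound} and Lemma~\ref{lem:escape} apply. One also has to make the domination by a single sub-probability sequence uniform over the starting state in $S$, which is why the suprema over $s\in S$ are taken.
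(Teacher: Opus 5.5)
Your proposal is correct and uses the same inputs as the paper. These are the first-return kernel $K_m(s,k)$ of the $Q_2$-chain to $S$ (first return at time $m$, landing at $k\in S$), the pointwise tail $\sum_{k\in S}K_m(s,k)\le C(m+1)^{-D-1}$ from \eqref{eq:large.state} and \eqref{eq:L.sbound}, and the uniform defect $1-\theta\ge\delta_*\min_{u\in U}\widetilde{\mathbb P}_u(\widetilde T_S=\infty)$ from Lemmas~\ref{lem:small_state2} and~\ref{lem:escape}. The quantity $\mathbb P_0(n\in\{\sigma_k\})=\sum_{j\in S}Q_2^n(0,j)$ is also exactly what the paper controls.

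The difference is the renewal step. The paper never expands into cycles. It first derives the Green's function bound $\max_{i\in S}\sum_n\sum_{j\in S}Q_2^n(i,j)\le\delta^{-1}$. It then proves $\sum_{j\in S}Q_2^n(i,j)\le A(n+1)^{-D-1}$ by induction on $n$, splitting the first-return time at $\varepsilon(n+1)$. Short returns are absorbed by $(1-\delta)(1-\varepsilon)^{-D-1}\le 1-\delta/2$, and long ones are paid for by the pointwise tail times the Green's function. You instead sum over the number $N$ of cycles and apply a one-big-jump bound to each $N$-fold convolution, letting $\theta^{N-1}$ absorb $N^{D+2}$. Your route is explicit and non-inductive, and it shows the mechanism: geometrically many excursions, one of which is long. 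The paper's route avoids convolution powers, and the same inductive template is reused for the inhomogeneous chain in Proposition~\ref{lem:genruin}.

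One point in Step~3 should be restated. A single scalar sequence $\bar a$ with both $\bar a(m)\le C(m+1)^{-D-1}$ and total mass $\theta$ need not exist. The natural candidate $\sup_{s\in S}a_s(m)$ is in general only bounded in total mass by $(M-1)\theta$, which need not be below $1$. Work with the matrix kernel instead:
\[
\mathbb P_0(\sigma_N=n)=\sum_{m_1+\dots+m_N=n}\ \sum_{k_1,\dots,k_N\in S}\ \prod_{r=1}^N K_{m_r}(k_{r-1},k_r),\qquad k_0=0 .
\]
Bound the single entry $K_{m_i}(k_{i-1},k_i)$ of a longest cycle by $C(n/N+1)^{-D-1}$. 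Sum the other lengths freely using row sums at most $\theta$. At the longest cycle, sum the landing state $k_i$ over $S$ rather than taking a supremum, because the later cycle lengths are summed only after $k_i$ is fixed. This gives
\[
\mathbb P_0(\sigma_N=n)\le (M-1)\,C\,N\,\theta^{N-1}(n/N+1)^{-D-1},
\]
and the rest of your argument goes through. The paper's induction picks up the same factor $M-1$ when it sums over $k\in S$.
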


\begin{proof}
Recall that $S=\{0,\ldots,M-2\}$. For $i,j\in S$, set
$$
K_1(i,j):=Q_2(i,j),
\quad
K_n(i,j):=\sum_{x\in S^c}Q_2(i,x)\widetilde{\mathbb P}_x(\widetilde T_S=n-1,\widetilde Y_{n-1}=j)
\quad\text{for }n\ge2.
$$
By \eqref{eq:large.state}, $K_n(i,j)$ is the probability that the homogeneous Markov chain with transition matrix $Q_2$, started at $i$, first returns to $S$ at time $n$ and is then at $j$.

Recall that $U=\{M-1,M,\dots,2M-1\}$ and $\delta_*$ is defined in Lemma~\ref{lem:small_state2}. Applying Lemma~\ref{lem:small_state2} to the homogeneous Markov chain with transition matrix $Q_2$, started at any $i\in S$, we infer that the process makes its first step into $U$ with probability at least $\delta_*$. Until its next visit to $S$, its transition law agrees with $\widetilde Q$ by \eqref{eq:large.state}. By Lemma~\ref{lem:escape}, the probability that it never returns to $S$ is therefore at least
$$
\delta:=\delta_*\min_{x\in U}\widetilde{\P}_x(\widetilde T_S=\infty)>0.
$$
Moreover, since $\widetilde{\mathbb P}_x(\widetilde T_S=n-1,\widetilde Y_{n-1}=j)\le\widetilde{\P}_x(\widetilde Y_{n-1}=j)$, by \eqref{eq:L.sbound}, there exists $C<\infty$ such that
\begin{equation}\label{eq:Q2.return}
\sum_{n=1}^{\infty}\sum_{j\in S}K_n(i,j)\le1-\delta,
\quad
\sum_{j\in S}K_n(i,j)\le C(n+1)^{-D-1}
\quad\text{for }i\in S,\ n\ge1.
\end{equation}
Decomposing according to the first return of the homogeneous Markov chain with transition matrix $Q_2$ to $S$, we have
\begin{equation}\label{eq:Q2.renewal}
Q_2^n(i,j)
=
\sum_{\ell=1}^n\sum_{k\in S}K_\ell(i,k)Q_2^{n-\ell}(k,j)
\quad\text{for }i,j\in S,\ n\ge1.
\end{equation}
Summing \eqref{eq:Q2.renewal} over $1\le n\le N$ and $j\in S$, and including the term at time $0$, we obtain
$$
\max_{i\in S}\sum_{n=0}^N\sum_{j\in S}Q_2^n(i,j)
\le
1+(1-\delta)\max_{i\in S}\sum_{n=0}^N\sum_{j\in S}Q_2^n(i,j).
$$
Letting $N\rightarrow\infty$, we conclude that
\begin{equation}\label{eq:Q2.total}
\max_{i\in S}\sum_{n=0}^{\infty}\sum_{j\in S}Q_2^n(i,j)\le\delta^{-1}.
\end{equation}

Choose $\varepsilon\in(0,1)$ such that
$(1-\delta)(1-\varepsilon)^{-D-1}\le1-{\delta}/{2}.$
We prove by induction that, for a sufficiently large constant $A$,
$$
\sum_{j\in S}Q_2^n(i,j)\le A(n+1)^{-D-1}
\quad\text{for every }i\in S,\ n\ge0.
$$
The case $n=0$ holds for $A\ge1$. Suppose that the bound holds for all smaller times. Splitting the sum over $\ell$ in \eqref{eq:Q2.renewal} according to whether $\ell \le\varepsilon(n+1)$, and using \eqref{eq:Q2.return} and \eqref{eq:Q2.total}, we obtain
\begin{align*}
\sum_{j\in S}Q_2^n(i,j)
&\le
A(1-\delta)(1-\varepsilon)^{-D-1}(n+1)^{-D-1}+
C\varepsilon^{-D-1}(n+1)^{-D-1}
\sum_{m=0}^{n-1}\sum_{k,j\in S}Q_2^m(k,j)\\
&\le
\left(
A\left(1-\frac{\delta}{2}\right)
+\frac{C\varepsilon^{-D-1}(M-1)}{\delta}
\right)(n+1)^{-D-1}.
\end{align*}
Choosing
$A\ge\max\left\{1,\frac{2C\varepsilon^{-D-1}(M-1)}{\delta^2}\right\}$, we complete the induction.
Taking $i=0$, we obtain the result of the lemma.
\end{proof}


\begin{lemma}\label{lem:ruincomp}
For every $d\ge2$ and $i,j\in\mathbb Z_+$, we have
$$
\sum_{h=0}^j Q_d(i+1,h)\le\sum_{h=0}^j Q_d(i,h).
$$
Moreover,
\begin{equation}\label{eq:Qcomp}
\sum_{h=0}^j\widetilde Q(i,h)
\le\sum_{h=0}^jQ_d(i,h)
\le\sum_{h=0}^jQ_2(i,h).
\end{equation}
\end{lemma}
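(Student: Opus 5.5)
The first assertion follows from pathwise monotonicity. Each $\Phi_k$ in \eqref{def.Phi} is non-decreasing in its argument, because the number of backward steps before the $(m+1)$-st forward step is at least the number before the $m$-th. Hence $\Phi_k(i+2)\ge\Phi_k(i+1)$ almost surely. Since $Q_d(i,\cdot)$ is the law of $\Phi_k(i+1)$ for a vertex $v_k$ of degree $d$, this pathwise inequality gives $\sum_{h\le j}Q_d(i+1,h)\le\sum_{h\le j}Q_d(i,h)$.

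For \eqref{eq:Qcomp} I will build, on one probability space, random variables $\Phi^{(2)}(m)$, $\Phi^{(d)}(m)$ and $\widetilde\Phi(m)$ with laws $Q_2(m-1,\cdot)$, $Q_d(m-1,\cdot)$ and $\widetilde Q(m-1,\cdot)$, such that $\Phi^{(2)}(m)\le\Phi^{(d)}(m)\le\widetilde\Phi(m)$. The key observation is a decomposition of a single cookie step at a vertex of degree $d$ with $\lambda^{(j)}=1+\alpha_jd$. The backward probability is
$$\frac{1+\alpha_jd}{(1+\alpha_j)d}=\beta_j+\frac{1}{1+\alpha_j}\cdot\frac1d,$$
and the forward probability is $\frac{1}{1+\alpha_j}\cdot\frac1d$. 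So the $j$-th cookie step can be realised as follows. With probability $\beta_j$ it is a ``$\beta$-backward'' step. Otherwise it is a uniform step among the $d$ neighbours, which along the path is a fair coin (forward or backward) with probability $2/d$ and an off-path step, i.e.\ skipped, with probability $(d-2)/d$.

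I then take the following ingredients:
\begin{itemize}
\item independent indicators $\theta_j\sim\mathrm{Bernoulli}(\beta_j)$ for the $\beta$-backward steps;
\item independent uniforms $U_j$, and I declare a ``coin token'' present at slot $j$ (for degree $d$) iff $\theta_j=0$ and $U_j\le 2/d$;
\item one i.i.d.\ fair sequence $(\epsilon_1,\epsilon_2,\dots)$, consumed in order by the coin tokens in slots $1,\dots,M$ and then by the post-cookie steps.
\end{itemize}
By the memoryless property, the fair coins attached to tokens together with the post-cookie fair steps form one i.i.d.\ fair sequence. This reproduces the law of $\Phi_k(m)$, which counts backward steps before the $m$-th forward step. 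For $\widetilde Y$ (Definition~\ref{def:Ytilde}) there are no tokens, which is the formal case $d=\infty$; its generating function $(2-s)^{-m}\prod_j(1-\beta_j(1-s))$ matches this construction for every $m$. The token sets are nested: tokens for degree $d$ form a subset of those for degree $2$, and the empty set for $\widetilde Y$.

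In this coupling,
$$\Phi(m)=\#\{\text{backward coins among }\epsilon\text{ before the }m\text{-th forward coin}\}+\#\{j:\theta_j=1,\ \text{fewer than }m\text{ forward coins used before slot }j\}.$$
The first term is identical for all three walks, since the fair sequence is shared and consumed in order. For the second term, the number of fair coins consumed before slot $j$ is non-decreasing in the token set, because it is a longer prefix of the same sequence. Hence the number of forward coins before slot $j$ is non-decreasing, and the count of $\beta$-backwards that still precede the $m$-th forward step is non-increasing. This gives $\Phi^{(2)}(m)\le\Phi^{(d)}(m)\le\widetilde\Phi(m)$ pathwise, and therefore \eqref{eq:Qcomp}.

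The main obstacle is verifying that this coupling has the correct law for small $m<M$, where Lemma~\ref{lem:F_pgf} does not apply. In that range the $m$-th forward step may occur during the cookie phase, so the representation $\Phi=B+H_{m-A}$ is unavailable. I will check it directly by noting that the construction reproduces exactly the sequential step-by-step dynamics defining $\eta_{v_k}$, with the off-path steps deleted. The identity for $\widetilde Q$ at small states holds by definition, because its generating function equals that of this no-token construction.
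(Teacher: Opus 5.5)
Your proposal is correct and is essentially the paper's argument. Both split the $\ell$-th cookie into a $\beta_\ell$-backward step plus a uniform choice among the $d$ neighbours, share one fair Bernoulli sequence, use nested on-path index sets ($\mathcal J$ in the paper, your tokens), and use monotonicity of the indicator that a $\beta$-backward step precedes the $m$-th forward step; $\widetilde Q$ is the empty-token case, which is the paper's bound by $\operatorname{card}(\mathcal I)$. Your uniforms $U_j$ couple all degrees at once, slightly more than needed, since the paper only compares each $d$ with $d=2$, where the token set is the full complement of $\mathcal I$.
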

\begin{proof} 
Fix $n\ge 2$ and $1\le k\le n-1$ such that $d=\deg(v_k)$. Since $\Phi_k$ is non-decreasing, for $i_1\le i_2$ we have
$$
\Phi_k(i_1+1)\le\Phi_k(i_2+1)
\quad\text{almost surely}.
$$
By \eqref{def:Y} and the definition of $Q_d$, we obtain the first assertion.

By \eqref{P.quenched}, for $1\le\ell\le M$, the probabilities that the $\ell$-th cookie at $v_k$ prescribes a step to $v_{k+1}$ or $v_{k-1}$ are, respectively,
$$
\frac{1}{d(1+\alpha_\ell)}=\frac{1-\beta_\ell}{d},
\quad
\frac{1+d\alpha_\ell}{d(1+\alpha_\ell)}
=\beta_\ell+\frac{1-\beta_\ell}{d}.
$$
Thus we may represent the $\ell$-th cookie choice by taking a step to $v_{k-1}$ with probability $\beta_\ell$ and, with probability $1-\beta_\ell$, choosing uniformly among the $d$ neighbours of $v_k$.

Let $\mathcal I\subset\{1,\ldots,M\}$ contain each index $\ell$ independently with probability $\beta_\ell$. Conditional on $\mathcal I$, let $\mathcal J\subset\{1,\ldots,M\}\setminus\mathcal I$ contain each remaining index independently with probability $2/d$. Recall that, for $r\ge1$, $H_r$ is the number of failures before the $r$-th success in a sequence of independent $\operatorname{Bernoulli}(1/2)$ random variables defined in the proof of Lemma~\ref{lem:F_pgf}. Here we take this sequence independently of $\mathcal I$ and $\mathcal J$.

Conditional on $\mathcal I$ and $\mathcal J$, we consider the cookie indices in increasing order. Each index in $\mathcal I$ prescribes a backward step from $v_k$ to $v_{k-1}$. At each index in $\mathcal J$, we use the next Bernoulli random variable to prescribe a forward step from $v_k$ to $v_{k+1}$ if its value is $1$ and a backward step from $v_k$ to $v_{k-1}$ if its value is $0$. Indices outside $\mathcal I\cup\mathcal J$ prescribe steps from $v_k$ to neighbours outside $\mathcal P_{v_n}$ and are omitted from the restriction to $\mathcal P_{v_n}$. After index $M$, we continue with the remaining Bernoulli random variables.

The $(i+1)$-st forward step requires $i+1+H_{i+1}$ Bernoulli random variables. For $\ell\in\mathcal I$, exactly $\operatorname{card}\{r\in\mathcal J:r<\ell\}$ Bernoulli random variables are used at cookie indices smaller than $\ell$. Therefore the backward step at index $\ell$ occurs before the $(i+1)$-st forward step if and only if
$$
\operatorname{card}\{r\in\mathcal J:r<\ell\}<i+1+H_{i+1}.
$$
By the definition of $Q_d$, we obtain
\begin{equation}\label{eq:Qcomp.count}
\sum_{h=0}^jQ_d(i,h)
=
\mathbb P\left(
H_{i+1}
+\sum_{\ell\in\mathcal I}
\mathds1_{\left\{
\operatorname{card}\{r\in\mathcal J:r<\ell\}<i+1+H_{i+1}
\right\}}
\le j
\right).
\end{equation}

For the comparison of $Q_2$ and $Q_d$, we use the same set $\mathcal I$ and the same Bernoulli sequence. When $d=2$, we have $\mathcal J=\{1,\ldots,M\}\setminus\mathcal I$, whereas for $d\ge2$, $\mathcal J$ is a subset of this set. For fixed $\mathcal I$ and $H_{i+1}$, each indicator in \eqref{eq:Qcomp.count} is non-increasing when $\mathcal J$ increases under inclusion. Averaging this comparison, we obtain the second inequality in \eqref{eq:Qcomp}.

The sum over $\ell\in\mathcal I$ in \eqref{eq:Qcomp.count} is at most $\operatorname{card}(\mathcal I)$. Hence
$$
\sum_{h=0}^jQ_d(i,h)
\ge
\mathbb P\bigl(H_{i+1}+\operatorname{card}(\mathcal I)\le j\bigr).
$$
By independence and Definition~\ref{def:Ytilde}, for $0\le s<1$,
$$
\mathbb E\left[s^{H_{i+1}+\operatorname{card}(\mathcal I)}\right]
=
(2-s)^{-i-1}\prod_{\ell=1}^M\bigl(1-\beta_\ell(1-s)\bigr)
=
\sum_{h=0}^{\infty}\widetilde Q(i,h)s^h.
$$
Thus $H_{i+1}+\operatorname{card}(\mathcal I)$ has distribution $\widetilde Q(i,\cdot)$, and we obtain the first inequality in \eqref{eq:Qcomp}.
\end{proof}

\begin{proof}[Proof of Proposition \ref{thm:ruin_prob}]
If $M=1$ or $D=0$, the transition laws in \eqref{def:Y} agree with $\widetilde Q$ at every state. In these cases,
$\mathbb P_0(Y_{n-1}^{(n)}=0)=G_{n-1}(0,0),$
and both assertions follow from Lemma~\ref{lem:G.asym}. We therefore assume that $M\ge2$ and $D>0$.

For $n\ge0$ and $i\ge0$, set
$$
P_n(i,0):=\mathbb P_i(Y_n^{(n+1)}=0).
$$
By \eqref{def:Y}, for $n\ge1$, the first transition of $Y^{(n+1)}$ uses $Q_{\deg(v_n)}$. Conditional on $\{Y_1^{(n+1)}=x\}$, the next $n-1$ transitions have the law of $Y^{(n)}$ started at $x$. Therefore $P_0(i,0)=\mathds1_{\{i=0\}}$ and
\begin{equation}\label{eq:ruin.recursion}
P_n(i,0)=\sum_{x=0}^{\infty}Q_{\deg(v_n)}(i,x)P_{n-1}(x,0)
\quad\text{for }n\ge1.
\end{equation}
In particular,
$\mathbb P_0(Y_{n-1}^{(n)}=0)=P_{n-1}(0,0).$
By Lemma~\ref{lem:ruincomp} and induction in \eqref{eq:ruin.recursion}, $P_n(i,0)$ is non-increasing in $i$. Iterating the comparisons in \eqref{eq:Qcomp}, using monotonicity in the initial state for $Q_2^n$, and applying \eqref{eq:Q2bound}, we obtain
\begin{equation}\label{eq:ruin.compare}
\widetilde{\mathbb P}_i(\widetilde Y_n=0)
\le P_n(i,0) \le Q_2^n(i,0) \le Q_2^n(0,0) \le C(n+1)^{-D-1} \quad\text{for }i,n\ge0.
\end{equation}
Taking $i=0$ and replacing $n$ by $n-1$, we have
$$
G_{n-1}(0,0)
\le\mathbb P_0(Y_{n-1}^{(n)}=0)
\le Cn^{-D-1}.
$$
By \eqref{exact.g}, $G_{n-1}(0,0)>0$ for every $n\ge1$. By Lemma~\ref{lem:G.asym},
$$
G_{n-1}(0,0)\sim C(0)n^{-D-1}\quad \text{with }  C(0)>0.
$$
This implies \eqref{eq:ruin.uniform}. 

For $n\ge1$ and $i\ge0$, define
\begin{equation}\label{eq:ruin.Delta}
\Delta_n(i)
:= \sum_{x=0}^{\infty}\bigl(Q_{\deg(v_n)}(i,x)-\widetilde Q(i,x)\bigr)P_{n-1}(x,0)=
P_n(i,0)-\sum_{x=0}^{\infty}\widetilde Q(i,x)P_{n-1}(x,0).
\end{equation}
Since $P_{n-1}(x,0)$ is non-increasing in $x$, by \eqref{eq:Qcomp} we have $\Delta_n(i)\ge0$. By \eqref{eq:large.state}, $\Delta_n(i)=0$ for $i\in S^c$. Moreover, by \eqref{eq:ruin.Delta} and \eqref{eq:ruin.compare},
\begin{equation}\label{eq:ruin.Delta.bound}
0\le\Delta_n(i)\le P_n(i,0)\le C(n+1)^{-D-1}.
\end{equation}
Recall that $L_j^{(i)}(m):=\widetilde{\mathbb P}_i(\widetilde Y_m=j)$, with $L_j^{(i)}(0)=\mathds1_{\{i=j\}}$. Iterating the second equality in \eqref{eq:ruin.Delta}, and using $\Delta_m(j)=0$ for $j\in S^c$, we obtain
\begin{equation}\label{eq:renewal}
P_n(i,0)=L_0^{(i)}(n)+\sum_{m=1}^n\sum_{j\in S}L_j^{(i)}(n-m)\Delta_m(j)
\quad\text{for }i,n\ge0.
\end{equation}

We next estimate $\sum_{m=n}^{\infty}\Delta_m(i)$ for $i\in S$. Fix an integer $R\ge1$. By \eqref{eq:ruin.recursion}, monotonicity of $P_{m-1}(x,0)$ in $x$, and \eqref{eq:Qcomp},
\begin{equation}\label{eq:ruin.decrement}
\begin{aligned}
P_{m-1}(0,0)-P_m(0,0)
&=
\sum_{x=0}^{\infty}Q_{\deg(v_m)}(0,x)
\bigl(P_{m-1}(0,0)-P_{m-1}(x,0)\bigr)\\
&\ge
\left(\sum_{x=R}^{\infty}Q_2(0,x)\right)
\bigl(P_{m-1}(0,0)-P_{m-1}(R,0)\bigr).
\end{aligned}
\end{equation}
For a vertex $v_k$ of degree two, the probability that its first $R$ steps are all from $v_k$ to $v_{k-1}$, before any step from $v_k$ to $v_{k+1}$, is at least $2^{-R}$. By the definition of $Q_2(0,\cdot)$, we therefore have
$\sum_{x=R}^{\infty}Q_2(0,x)\ge2^{-R}>0.$
By \eqref{eq:ruin.Delta}, monotonicity of $P_{m-1}(x,0)$, and \eqref{eq:ruin.decrement}, splitting the sum over $x$ at $R$, we obtain
\begin{equation}\label{eq:ruin.Delta.estimate}
\begin{aligned}
\Delta_m(i)
&\le
\sum_{x=0}^{\infty}\widetilde Q(i,x)
\bigl(P_{m-1}(0,0)-P_{m-1}(x,0)\bigr)\\
&\le
\frac{P_{m-1}(0,0)-P_m(0,0)}
{\sum_{x=R}^{\infty}Q_2(0,x)}
+
P_{m-1}(0,0)\sum_{x>R}\widetilde Q(i,x).
\end{aligned}
\end{equation}
Summing \eqref{eq:ruin.Delta.estimate} over $m\ge n$, using the fact that $P_m(0,0)\rightarrow0$ and \eqref{eq:ruin.compare}, we obtain
$$
n^D\sum_{m=n}^{\infty}\Delta_m(i)
\le
\frac{n^DP_{n-1}(0,0)}
{\sum_{x=R}^{\infty}Q_2(0,x)}
+
C\sum_{x>R}\widetilde Q(i,x),
$$
where the constant $C$ does not depend on $R$. For each fixed $R$, the first term tends to zero as $n\rightarrow\infty$. Hence
$$
\limsup_{n\rightarrow\infty}
n^D\sum_{m=n}^{\infty}\Delta_m(i)
\le C\sum_{x>R}\widetilde Q(i,x).
$$
Taking $R\rightarrow\infty$, we conclude that
\begin{equation}\label{eq:ruin.Delta.tail}
n^D\sum_{m=n}^{\infty}\Delta_m(i)\rightarrow0
\quad\text{as }n\rightarrow\infty,
\quad i\in S.
\end{equation}

For $A(z)=\sum_{m=0}^{\infty}a_mz^m$ with $\sum_{m=0}^{\infty}|a_m|<\infty$, we have
\begin{equation}\label{eq:tailsum}
\sum_{n=1}^{\infty}z^n\sum_{m=n}^{\infty}a_m
=\frac{z\bigl(A(1)-A(z)\bigr)}{1-z}
\quad\text{for }|z|<1.
\end{equation}
Recall that $\widehat L_{0,j}(z)=\sum_{n=1}^{\infty}L_j^{(0)}(n)z^n$, and define
$$
\widehat\Delta_j(z):=\sum_{n=1}^{\infty}\Delta_n(j)z^n,
\quad
\widehat P_{0,0}(z):=\sum_{n=0}^{\infty}P_n(0,0)z^n.
$$
By \eqref{eq:L.sbound}, \eqref{eq:ruin.compare}, and \eqref{eq:ruin.Delta.bound}, these series converge absolutely for $|z|\le1$. Multiplying \eqref{eq:renewal}, with $i=0$, by $z^n$ and summing over $n\ge0$, we obtain
\begin{equation}\label{eq:ruin.generating}
\widehat P_{0,0}(z)
=
1+\widehat L_{0,0}(z)+\widehat\Delta_0(z)
+\sum_{j\in S}\widehat L_{0,j}(z)\widehat\Delta_j(z).
\end{equation}
Here the separate term $\widehat\Delta_0(z)$ corresponds to $L_j^{(0)}(0)=\mathds1_{\{j=0\}}$.

For each $j\in S$, expanding the product in \eqref{eq:ruin.generating} about the values $\widehat L_{0,j}(1)$ and $\widehat\Delta_j(1)$, we have
\begin{equation}\label{eq:ruin.Taylor}
\begin{aligned}
\widehat L_{0,j}(z)\widehat\Delta_j(z) -\widehat L_{0,j}(1)\widehat\Delta_j(1)
&= \widehat\Delta_j(1) \bigl(\widehat L_{0,j}(z)-\widehat L_{0,j}(1)\bigr) + \widehat L_{0,j}(1) \bigl(\widehat\Delta_j(z)-\widehat\Delta_j(1)\bigr)\\
&\quad+ \bigl(\widehat L_{0,j}(z)-\widehat L_{0,j}(1)\bigr) \bigl(\widehat\Delta_j(z)-\widehat\Delta_j(1)\bigr).
\end{aligned}
\end{equation}
By \eqref{eq:L.sbound} and \eqref{eq:ruin.Delta.bound}, both $L_j^{(0)}(m)$ and $\Delta_m(j)$ are bounded by $C(m+1)^{-D-1}$. Splitting the following sum at $m=\lfloor n/2\rfloor$, we obtain, for $n\ge2$,
$$
\sum_{m=1}^{n-1}L_j^{(0)}(m)\sum_{t=n-m}^{n-1}\Delta_t(j)
\le Cn^{-D-1}\sum_{m=1}^{n}m^{-D}.
$$
Using \eqref{eq:tailsum} and these bounds, we have
\begin{equation}\label{eq:HL.rembound}
\begin{aligned}
&n^D\left|
[z^n]\frac{z}{1-z}
\bigl(\widehat L_{0,j}(z)-\widehat L_{0,j}(1)\bigr)
\bigl(\widehat\Delta_j(z)-\widehat\Delta_j(1)\bigr)
\right|\\
&\quad=n^D\left|
\sum_{m=1}^{n-1}L_j^{(0)}(m)\sum_{t=n-m}^{n-1}\Delta_t(j)
-\left(\sum_{m=n}^{\infty}L_j^{(0)}(m)\right)
\left(\sum_{t=n}^{\infty}\Delta_t(j)\right)
\right|\\
&\quad\le
C\left(\frac1n\sum_{m=1}^n m^{-D}+n^{-D}\right)
\rightarrow0
\quad\text{as }n\rightarrow\infty,
\end{aligned}
\end{equation}
where $[z^n]$ denotes the coefficient of $z^n$ and we used $D>0$ in the last limit.
Subtract \eqref{eq:ruin.generating} at $z=1$ from its value at $z$ and multiply by $-z/(1-z)$. Comparing the coefficients of $z^n$, using \eqref{eq:tailsum} and \eqref{eq:ruin.Taylor}, and then applying \eqref{eq:L.taillim}, \eqref{eq:ruin.Delta.tail}, and \eqref{eq:HL.rembound}, we obtain
\begin{equation}\label{eq:ruin.tail}
n^D\sum_{m=n}^{\infty}P_m(0,0)
\rightarrow
\frac1D\left(\ell_0+\sum_{j\in S}\ell_j\widehat\Delta_j(1)\right)
\quad\text{as }n\rightarrow\infty.
\end{equation}
Set
$$
c_*(\varkappa):=\ell_0+\sum_{j\in S}\ell_j\widehat\Delta_j(1).
$$
By \eqref{eq:Lasym}, $\ell_j\ge0$ and $\ell_0>0$. Since $\Delta_m(j)\ge0$ and $\widehat\Delta_j(1)<\infty$, we have $c_*(\varkappa)\in(0,\infty)$.

By Lemma~\ref{lem:monotone}, $P_n(0,0)$ is non-increasing. Applying Theorem~\ref{thm:karamata} to \eqref{eq:ruin.tail}, we obtain
$$
P_n(0,0)\sim c_*(\varkappa)n^{-D-1}
\quad\text{as }n\rightarrow\infty.
$$
Since $\mathbb P_0(Y_{n-1}^{(n)}=0)=P_{n-1}(0,0)$, this proves the first assertion. Applying \eqref{eq:ruin.uniform} along the ray $\varkappa$ and using this asymptotic, we obtain $C^{-1}\le c_*(\varkappa)\le C$.
\end{proof}

\subsection{Generalised ruin probability}\label{sec:gen_ruin}
Let $\tau_\rho^{(j)}(v)$ be the $j$-th return time of $\X^{(v)}$ to $\rho$. We estimate the generalised ruin probability
$$
r_v^{(j)}:=\P\bigl(\tau_v(v)<\tau_\rho^{(j)}(v)\bigr).
$$

\begin{proposition}\label{lem:genruin}
There exists a constant $C<\infty$, depending only on $M$ and $(\alpha_k)_{1\le k\le M}$, such that, for every $v\in V\setminus\{\rho\}$ and every integer $j\ge1$,
$$
r_v^{(j)} \le C\left(1\wedge\left(\frac{j}{|v|+1}\right)^{D+1}\right).
$$
\end{proposition}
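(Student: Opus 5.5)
Set $n:=|v|$ and write $\mathcal P_v=(v_0,\dots,v_n)$. Since $r_v^{(j)}\le 1$, the claim is trivial when $j\ge (n+1)/2$, say. We may therefore assume $j$ is small compared with $n$. We also assume $M\ge2$ and $D>0$; when $M=1$ or $D=0$ everything reduces to $\widetilde Y$, and the same argument with $G_n$ replacing $Q_2^n$ applies.

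The plan is to extend the backward-step representation of Section~\ref{sec:markov}. On the event $\{\tau_v(v)<\tau_\rho^{(j)}(v)\}$, the walk $\X^{(v)}$ makes at most $j-1$ backward steps from $v_1$ to $\rho$ before hitting $v$, so $B_n(1)\le j-1$. Since $\X^{(v)}$ always steps from $\rho$ to $v_1$, the recursion \eqref{eq:B.recursion} holds verbatim. Hence
\[
r_v^{(j)}=\P(B_n(1)\le j-1)=\P_0\bigl(Y^{(n)}_{n-1}\le j-1\bigr).
\]
Write $P_m(i,[0,h])$ for the analogue of $P_m(i,0)$ with target set $\{0,\dots,h\}$. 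The recursion \eqref{eq:ruin.recursion} applies to these quantities as well. By the first part of Lemma~\ref{lem:ruincomp} they are non-increasing in $i$. Iterating \eqref{eq:Qcomp} exactly as in \eqref{eq:ruin.compare} gives
\[
r_v^{(j)}\le \sum_{h=0}^{j-1} Q_2^{\,n-1}(0,h).
\]

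It remains to bound $\sum_{h<j}Q_2^{m}(0,h)$ by $C(j/(m+1))^{D+1}$ uniformly in $j$. This is the main obstacle, because Lemma~\ref{lem:ruinhomo} only controls the single entry $(0,0)$, and \eqref{eq:L.sbound} only controls states in $S$. I would repeat the renewal argument of Lemma~\ref{lem:ruinhomo} and decompose at the last visit to $S$ before time $m$:
\[
\sum_{h<j}Q_2^m(0,h)=\sum_{h<j}L_h^{(0)}(m)+\sum_{\ell\le m}\sum_{k\in S}Q_2^{m-\ell}(0,k)\sum_{x\in S^c}Q_2(k,x)\,\widetilde\P_x(\widetilde Y_{\ell-1}\le j-1).
\]
The key homogeneous input is a Markov-inequality bound at a well-chosen $s$. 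By Lemma~\ref{lem:G.asym}, for every $x\ge0$,
\[
\widetilde\P_x(\widetilde Y_t\le j-1)\le s^{-(j-1)}G_t(x,s)\le s^{-(j-1)}\bigl(1+t(1-s)\bigr)^{-D-1}.
\]
Take $s=1-1/j$. Then $s^{-(j-1)}\le e$ and $1+t(1-s)\ge t/j$, so $\widetilde\P_x(\widetilde Y_t\le j-1)\le e\,(j/t)^{D+1}\wedge 1$ uniformly in $x$. This gives $C(j/(t+1))^{D+1}$, and it already handles the first term.

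For the convolution term I split at $\ell\le m/2$ versus $\ell>m/2$. When $\ell>m/2$, the homogeneous bound gives $C(j/m)^{D+1}$, and this is multiplied by $\sum_t\sum_{k\in S}Q_2^t(0,k)\le\delta^{-1}$ from \eqref{eq:Q2.total}. When $\ell\le m/2$, I use the induction bound $\sum_{k\in S}Q_2^{m-\ell}(0,k)\le A(m/2+1)^{-D-1}$ established in the proof of Lemma~\ref{lem:ruinhomo}. I then need $\sum_{\ell\le m/2}\widetilde\P(\widetilde Y_{\ell-1}\le j-1)\le Cj$. This follows from summing $e(j/t)^{D+1}\wedge1$ over $t$: the sum is $O(j)$ because $D>0$. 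The total contribution is then $Cj\,m^{-D-1}\le C(j/m)^{D+1}$, since $j\ge1$.

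Combining the two cases yields $r_v^{(j)}\le C(j/n)^{D+1}$. Adjusting the constant to replace $n$ by $n+1$ and taking the minimum with $1$ gives the proposition, with $C$ depending only on $M$ and $(\alpha_k)$.
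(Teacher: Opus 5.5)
Your argument works once one display is repaired, and it takes a genuinely different route from the paper's.

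\textbf{The repair.} As written, your last-visit decomposition is not an identity. For $m=1$, $j=1$ its second term vanishes (because $0\in S$), and the display then reads $Q_2(0,0)=\widetilde Q(0,0)$. In fact $Q_2(0,0)=(1-\beta_1)/2\ge \frac12\prod_{k}(1-\beta_k)=\widetilde Q(0,0)$, with strict inequality as soon as $\beta_k>0$ for some $k\ge2$. Since the chain starts in $S$, the correct last-visit decomposition is
\[
\sum_{h<j}Q_2^m(0,h)\le\sum_{h\in S}Q_2^m(0,h)+\sum_{\ell=1}^m\sum_{k\in S}Q_2^{m-\ell}(0,k)\sum_{x\in S^c}Q_2(k,x)\,\widetilde{\P}_x(\widetilde Y_{\ell-1}\le j-1).
\]
The first term carries all the mass when $j\le M-1$. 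It is bounded by $A(m+1)^{-D-1}\le A\,(j/(m+1))^{D+1}$ using the induction in the proof of Lemma~\ref{lem:ruinhomo}, which you already invoke. Alternatively, you can keep $\widetilde{\P}_0(\widetilde Y_m\le j-1)$ as the first term if you use $Q_2^m=\widetilde Q^m+\sum_{\ell=1}^mQ_2^{m-\ell}(Q_2-\widetilde Q)\widetilde Q^{\ell-1}$; the inner sum then runs over all $x\in\Z_+$. With either fix, the rest goes through: the choice $s=1-1/j$, the split at $\ell\approx m/2$ using \eqref{eq:Q2.total}, and $\sum_{t\ge0}e\,(j/(j+t))^{D+1}=O(j)$ for $D>0$.

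\textbf{Comparison with the paper.} The paper never passes to $Q_2$. It works directly with the inhomogeneous chain $Y^{(n)}$ and decomposes at the first return to $S$. The first-return kernel $K_\ell$ has total mass at most $1-\delta$ and tail $C_0(\ell+1)^{-D-1}$, from Lemmas~\ref{lem:small_state2}, \ref{lem:escape} and the homogeneous bound with $s=j/(j+1)$. It then restarts via the strong Markov property on the shorter path $\mathcal P_{v_{n-\ell}}$, and runs a fresh induction on $m$ for $Cj^{D+1}(m+L)^{-D-1}$, simultaneously over all finite paths. Your route instead uses the monotone coupling of Lemma~\ref{lem:ruincomp} to show that the generalised ruin probability along any path is dominated by that of the all-degree-two chain $Q_2$. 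Path-uniformity is then automatic, and no new induction is needed, since you recycle the output of Lemma~\ref{lem:ruinhomo}. The paper's argument is logically independent of Lemmas~\ref{lem:ruincomp} and~\ref{lem:ruinhomo}, at the cost of repeating a renewal induction with the extra weight $j^{D+1}$.
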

\begin{proof}
Let $|v|=n$ and $\mathcal P_v=(v_0,\ldots,v_n)$, with $v_0=\rho$ and $v_n=v$. Notice that
$\{\tau_v(v)<\tau_\rho^{(j)}(v)\} =\{B_n(1)<j\}.$
By \eqref{eq:comp}, we therefore obtain
\begin{equation}\label{genruin.bw}
r_v^{(j)} =\P_0\bigl(Y_{n-1}^{(n)}<j\bigr).
\end{equation}

We first estimate the corresponding probability for $\widetilde Y$. For $j\ge1$, set $s=j/(j+1)$. Since $s^{-j}\le\mathrm e$, by Lemma~\ref{lem:G.asym}, for every $x\in\Z_+$ and $m\ge0$, we have
\begin{equation}\label{genruin.hom}
\begin{aligned}
\widetilde{\P}_x(\widetilde Y_m<j) \le s^{-j}G_m(x,s)\le \mathrm e\left(1+\frac{m}{j+1}\right)^{-D-1}\le C\left(1\wedge\left(\frac{j}{m+1}\right)^{D+1}\right).
\end{aligned}
\end{equation}
If $M=1$ or $D=0$, the transition laws of $Y^{(n)}$ and $\widetilde Y$ agree at every state. In these cases, applying \eqref{genruin.hom} with $m=n-1$ and using $n+1\le2n$, we obtain the result from \eqref{genruin.bw}. We now assume that $M\ge2$ and $D>0$.

For each fixed $n$, recall that $T_S^+:=\inf\{1\le\ell\le n-1:Y_\ell^{(n)}\in S\}$ and set
$$
\quad
K_\ell(i,m):=\P_i(T_S^+=\ell,Y_\ell^{(n)}=m)
\quad\text{for } i,m\in S.
$$
For $\ell \ge2$, on $\{T_S^+=\ell\}$ we have $Y_1^{(n)}\in S^c$, and the transitions until the return to $S$ agree with those of $\widetilde Y$. Hence
$$
\sum_{k\in S}K_\ell(i,k) \le \sup_{x\in S^c} \widetilde{\P}_x(\widetilde Y_{\ell-1}\in S).
$$
Applying \eqref{genruin.hom} with $j=M-1$, we obtain a bound by $C(\ell+1)^{-D-1}$. Moreover, by Lemmas~\ref{lem:small_state2} and~\ref{lem:escape}, we have
$$
\P_i(T_S^+>n-1) \ge \delta_*\min_{x\in U} \widetilde{\P}_x(\widetilde T_S=\infty) =:\delta>0.
$$
Consequently, there exists $C_0\ge1$, independent of the finite path, such that
\begin{equation}\label{eq:K}
\begin{aligned}
\sum_{\ell=1}^{n-1}\sum_{k\in S}K_\ell(i,k)&\le1-\delta,\quad
\sum_{k\in S}K_\ell(i,k)\le C_0(\ell+1)^{-D-1}\quad\text{for } 1\le \ell \le n-1.
\end{aligned}
\end{equation}

By the strong Markov property and \eqref{def:Y}, conditionally on $\{T_S^+=\ell,Y_\ell^{(n)}=k\}$, the chain after time $\ell$ has the transition law of $Y^{(n-\ell)}$ started at $k$, associated with the path  $\mathcal{P}_{v_{n-\ell}}$. Thus, for $i\in S$ and $0\le m\le n-1$,
\begin{equation}\label{genruin.recur}
\begin{aligned}
\P_i(Y_m^{(n)}<j) &= \P_i(Y_m^{(n)}<j,T_S^+>m)+ \sum_{\ell=1}^m\sum_{k\in S} K_\ell(i,k)\P_k(Y_{m-\ell}^{(n-\ell)}<j)\\
&\le C_0j^{D+1}(m+1)^{-D-1}+ \sum_{\ell=1}^m\sum_{k\in S} K_\ell(i,k)\P_k(Y_{m-\ell}^{(n-\ell)}<j).
\end{aligned}
\end{equation}
For the last inequality, when $m\ge1$, the event $\{T_S^+>m\}$ requires $Y_1^{(n)}\in S^c$. We then apply \eqref{genruin.hom} to the subsequent $m-1$ transitions.
Choose $\varepsilon\in(0,1)$ such that
$(1-\delta)(1-\varepsilon)^{-D-1}\le1-{\delta}/{2}$
and an integer $L\ge2$ such that
$C_0\varepsilon^{-D-1}\sum_{\ell=0}^{\infty}(\ell+L)^{-D-1}\le\frac{\delta}{4}.$
Splitting the sum over $\ell$ according to whether $\ell\le\varepsilon(m+L)$ and using \eqref{eq:K}, we obtain
\begin{equation}\label{eq:genruin.weighted}
\begin{aligned}
\sum_{\ell=1}^m\sum_{k\in S}
K_\ell(i,k)(m-\ell+L)^{-D-1} &\le (m+L)^{-D-1} \Big((1-\delta)(1-\varepsilon)^{-D-1} +C_0\varepsilon^{-D-1} \sum_{\ell=0}^{\infty}(\ell+L)^{-D-1} \Big)\\
&\le
\left(1-\frac{\delta}{4}\right)(m+L)^{-D-1}.
\end{aligned}
\end{equation}
Choose $C\ge4C_0L^{D+1}/\delta$. We prove by induction on $m$, simultaneously for all finite paths, all $n\ge m+1$, $i\in S$ and $j\ge1$, that
$$
\P_i(Y_m^{(n)}<j)
\le Cj^{D+1}(m+L)^{-D-1}.
$$
This inequality trivally hold for $m=0$. Suppose that it holds for all smaller times. By \eqref{genruin.recur}, the induction hypothesis and \eqref{eq:genruin.weighted}, we obtain
$$
\begin{aligned}
\P_i(Y_m^{(n)}<j) &\le C_0j^{D+1}(m+1)^{-D-1}+ Cj^{D+1}\sum_{\ell=1}^m\sum_{k\in S} K_\ell(i,k)(m-\ell+L)^{-D-1}\\
&\le \left( C_0L^{D+1} +C\left(1-\frac{\delta}{4}\right) \right) j^{D+1}(m+L)^{-D-1}\le Cj^{D+1}(m+L)^{-D-1}.
\end{aligned}
$$
This completes the induction. Taking $i=0$ and $m=n-1$, by \eqref{genruin.bw} and $L\ge2$, we conclude that
$$
r_v^{(j)} \le Cj^{D+1}(n-1+L)^{-D-1} \le C\left(\frac{j}{n+1}\right)^{D+1}.
$$
Combining this estimate with $r_v^{(j)}\le1$, we obtain the stated bound.
\end{proof}

\section{Markov chains related to the number of forward steps}\label{sec:forward}

In this section we study $Z^{(n)}$, the inhomogeneous Markov chain describing the numbers of forward steps made by the extension $\X^{(v_n)}$ on $\mathcal P_{v_n}$ before its first return to $\rho$, where $v_n$ is at distance $n$ from $\rho$. We provide moment bounds for $Z^{(n)}$, uniform over all finite paths, which will be used in the proof of Proposition~\ref{prop:per} to establish quasi-independence of the ruin percolation.

\subsection{Markovian structure of the forwards steps.}
As in the previous section, we fix an infinite self-avoiding path $\varkappa=(v_k)_{k\ge0}$, with $v_0=\rho$.

Fix $n\ge1$. We consider the extension process $\X^{(v_n)}$ on the path $\mathcal P_{v_n}$. For $0\le k\le n-1$, let $F_n(k)$ be the number of forward steps from $v_k$ to $v_{k+1}$ before the first return of $\X^{(v_n)}$ to $\rho$. Note that $F_n(0)=1$.

Recall the coupling of $\boldsymbol\xi$ with the Bernoulli random variables $\boldsymbol\eta=(\eta_x(j):x\in\varkappa,\ j\ge1)$ from Section~\ref{sec:markov}. For integers $z\ge0$ and $1\le k\le n-1$, let
$$
\Psi_k(z):=\inf\left\{\ell\in\mathbb Z_+:\sum_{j=1}^{\ell+z}(1-\eta_{v_k}(j))=z\right\}.
$$
For $z\ge1$, $\Psi_k(z)$ is the number of forward steps from $v_k$ to $v_{k+1}$ before the $z$-th backward step from $v_k$ to $v_{k-1}$. Also, $\Psi_k(0)=0$.

Before the first return to $\rho$, the number of steps from $v_k$ to $v_{k-1}$ equals $F_n(k-1)$. If $v_k$ is visited, the last step from $v_k$ before that return is to $v_{k-1}$. If $v_k$ is not visited, then $F_n(k-1)=F_n(k)=0$. Therefore
$$
F_n(k)=\Psi_k(F_n(k-1))
\quad\text{for }1\le k\le n-1.
$$
Each $\Psi_k$ is determined by $(\eta_{v_k}(j))_{j\ge1}$, and these sequences are independent for distinct $k$ by the construction in Section~\ref{sec:markov}. Hence
$$
(F_n(0),F_n(1),\ldots,F_n(n-1))
$$
is an inhomogeneous Markov chain, whose transition law may depend on $k$ through $\deg(v_k)$.

For each $n\ge1$, let $(Z_m^{(n)})_{0\le m\le n-1}$ be the Markov chain on $\mathbb Z_+$ with transition law
$$
\P_i\bigl(Z_{m+1}^{(n)}=j\mid Z_m^{(n)}=\ell\bigr)=\P\bigl(\Psi_{m+1}(\ell)=j\bigr)\quad\text{for }0\le m\le n-2,\ \ell,j\in\mathbb Z_+.
$$
Here $\P_i$ denotes its law with initial state $i$. Under $\P_1$, we have
\begin{equation}\label{eq:forward.comp}
(F_n(0),F_n(1),\ldots,F_n(n-1))\overset{\mathrm d}{=}(Z_0^{(n)},Z_1^{(n)},\ldots,Z_{n-1}^{(n)}).
\end{equation}

\subsection{Moment bounds for the forward chain.}

Recall from the proof of Lemma \ref{lem:F_pgf} that $A_u^i$ (resp.\ $B_u^i$) be the indicator that, on the $i$th visit to $u$, the walk makes a forward step along $\mathcal P_v$ (resp.\ a backward step along $\mathcal P_v$), so that $A_u = \sum_{i=1}^M A_u^i$ and $B_u = \sum_{i=1}^M B_u^i$. Notice that it is possible that $A_u^i=B_u^i=0$, which corresponds to the event that the $i$th departure from $u$ is to a child not lying in $\mathcal P_v$. Notice that we have decomposition, $$\Psi_x(z) = A_x + H_{z-B_x},$$
where $H_r$, with $r\ge 1$, is the number of backward steps before the $r$-th forward step after the cookies have been consumed, which is negative binomial distributed, counting the number of failures before the $r$-th success in a sequence of independent $\operatorname{Bernoulli}(1/2)$ trials. We set $H_0=0$.

\begin{proposition}\label{prop.moment-drift}
There exists a constant $C_0<\infty$, depending only on $M$ and $(\alpha_j)_{1\le j\le M}$, such that
$$
\E_1\bigl[(Z_k^{(n)})^{2(D+1)}\bigr]\le C_0(k+1)^{D+1}
\quad\text{for }n\ge1,\ 0\le k\le n-1,
$$
uniformly over all rays $\varkappa$.
\end{proposition}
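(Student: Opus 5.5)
The idea is to treat $Z^{(n)}$ as a near-critical branching process with emigration, and to run a descending induction over moment orders, one unit at a time. By the decomposition $\Psi_k(z)=A_{v_k}+H_{z-B_{v_k}}$, the chain has the following conditional law given $Z_m^{(n)}=z\ge M$:
\[
Z_{m+1}^{(n)}=z+\bigl(H_{z-B}-(z-B)\bigr)-(B-A),\qquad (A,B)=(A_{v_{m+1}},B_{v_{m+1}}).
\]
Here $H_r$ is negative binomial with mean $r$ and variance $2r$, and it is independent of $(A,B)$.

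The key computation is that the emigration term has mean exactly $D$, whatever the degree. From the proof of Lemma~\ref{lem:ruincomp}, the $i$-th cookie at a vertex of degree $d$ gives a backward step with probability $\beta_i+(1-\beta_i)/d$ and a forward step with probability $(1-\beta_i)/d$. Hence $\E[B-A]=\sum_i\beta_i=D$ for every $d\ge2$. Moreover $0\le A,B\le M$, and the centred moments satisfy $\E|H_r-r|^j\le C_j r^{j/2}$ with $C_j$ not depending on the degree. So every constant below depends only on $M$ and $(\alpha_j)$, which gives the claimed uniformity over rays. States $z<M$ only produce $O(1)$ contributions, because there $Z_{m+1}\le M+H_M$, which has all moments bounded.

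For real $q\ge1$ I would set $m_q(k):=\E_1[(Z_k^{(n)})^q]$ and Taylor-expand $(z+X)^q$ with $X$ the increment above. Using $\E X=-D$ and $\E X^2=2z+O(1)$ (for $z\ge M$), I expect
\[
\E\bigl[(Z_{k+1}^{(n)})^q\mid Z_k^{(n)}=z\bigr]\le z^q+q(q-1-D)z^{q-1}+C_q\bigl(z^{q-2}+z^{q/2}+1\bigr).
\]
The remainder should be controlled by $|x|^3z^{q-3}+|x|^q$ together with the bounds on the centred moments of $H$.

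For $q\le D+1$ the leading drift is non-positive. Then $z^q$, corrected by lower-order terms, is essentially a supermartingale, and I would get $m_q(k)\le C$. For $q\le1$ this is simpler still: Jensen gives $m_q(k)\le(\E_1 Z_k)^q$, and $\E_1 Z_k\le 1+C$ since the mean drift is $-D\le0$ up to $O(1)$ corrections near $0$.

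For $q>D+1$, summing the one-step bound over $k$ gives
\[
m_q(k)\le 1+C\sum_{i<k}\bigl(m_{q-1}(i)+m_{q-2}(i)+m_{q/2}(i)+1\bigr).
\]
I would then prove by induction on $q=q_0+1,q_0+2,\dots$ that $m_q(k)\le C(k+1)^{q-D-1}$ whenever $q-D-1\ge0$, and $m_q(k)\le C$ otherwise. The starting point $q_0=2(D+1)-\lfloor 2(D+1)-D-1\rfloor$ lies in the bounded regime, and every lower-order term has exponent at most $q-D-2$, so summing in $k$ raises the exponent by one. For $q/2$ one needs $q/2-D-1\le q-D-2$, i.e.\ $q\ge2$. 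Any term with $q<2$ is already in the bounded regime whenever $D+1\ge q$, and it can be handled separately when $D$ is small, using $m_{q/2}\le m_{q-1}$ or Hölder's inequality. Taking $q=2(D+1)$ then gives $m_{2(D+1)}(k)\le C_0(k+1)^{D+1}$.

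The main obstacle is the Taylor step for non-integer $q$. One needs a clean inequality such as
\[
(z+x)^q\le z^q+qz^{q-1}x+\tfrac{q(q-1)}2z^{q-2}x^2+C_q\bigl(|x|^3z^{q-3}+|x|^q\bigr),
\]
valid for $z\ge1$ and $z+x\ge0$. One must also check that the negative binomial's heavy-ish lower tail does not spoil the $z^{q/2}$ remainder. I would prove this inequality first by splitting into the cases $|x|\le z/2$ and $|x|>z/2$. In the first case the third-order Taylor remainder is controlled by $|x|^3z^{q-3}$. In the second case $(z+x)^q\le C|x|^q$ directly, and since $|x|>z/2$ the expansion terms $z^q$, $qz^{q-1}|x|$ and $\tfrac{q(q-1)}2z^{q-2}x^2$ are each at most a constant times $|x|^q$ and can be absorbed.
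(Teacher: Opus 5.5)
You have a genuine gap: the induction has no valid base when $D\notin\mathbb Z$, and the exponent bookkeeping breaks at exactly the one fractional step. Write $\{D\}:=D-\lfloor D\rfloor$.

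\textbf{The base and the first step.} Your starting order is $q_0=2(D+1)-\lfloor D+1\rfloor=D+1+\{D\}$. Its leading drift coefficient is $q_0(q_0-1-D)=q_0\{D\}>0$, so $q_0$ is not in the bounded regime; your own rule assigns it the target exponent $\{D\}>0$. Suppose instead you start at some order $p\in(D,D+1)$ where $m_p\le C$ is plausible. The next step then gives $m_{p+1}(k)\le 1+C\sum_{i<k}(m_p(i)+\dots)\le C(k+1)$, not $C(k+1)^{p-D}$. The reason is that a bound $m_p(i)\le C$ has exponent $0$, not $p-D-1<0$. So your assertion that every lower-order term has exponent at most $q-D-2$ is false at this step. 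You would end with $(k+1)^{\lceil D+1\rceil}$ instead of $(k+1)^{D+1}$. For integer $D$ the unit steps up from $m_{D+1}\le C$ do work.

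\textbf{The bounded regime is also unjustified.} The $O(1)$ contributions from states $1\le z<M$ (your ``$+1$'') occur on $\{Z_i\ge1\}$ and accumulate over $i$. Hence both $\E_1 Z_k\le 1+C$ and $m_{D+1}(k)\le C$ require $\sum_i\P_1(Z_i^{(n)}\ge1)<\infty$ uniformly in $n$ and the ray. You never prove this.

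\textbf{The missing ingredient is control of the survival probability.} The paper notes that $\P_1(Z_k^{(n)}\ge1)=r_{k+1}$ by the restriction principle, and imports $r_{k+1}\le C(k+1)^{-D-1}$ from Proposition~\ref{thm:ruin_prob}. It then needs only the crude increment bound $C_1z^{2D+1}$ on $\{z\ge1\}$, with no exact drift cancellation. H\"older then yields $x_{k+1}\le x_k+Cx_k^{(2D+1)/(2D+2)}(k+1)^{-1/2}$, which integrates to $(k+1)^{D+1}$ for every real $D\ge0$.

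With the same survival bound, once $m_{D+1}\le C$, H\"older also gives $m_p(i)\le C(i+1)^{p-D-1}$ for $0<p\le D+1$, and your induction closes.

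Alternatively, you could keep your drift computation and avoid the sharp ruin estimate, for $D>0$:
\begin{enumerate}
\item Bound the expected extinction time uniformly by a Foster argument: $z\mapsto z$ has drift $-D$ for $z\ge M$, and from $1\le z<M$ the chain is absorbed in one step with probability at least $2^{-M}$.
\item Get $m_{D+1}\le C$ by summing the positive error terms, using the negative drift at a lower order.
\item Step up only along $D+1+\mathbb Z_{\ge0}$ to get $m_{D+1+j}(k)\le C(k+1)^j$.
\item Reach order $2(D+1)$ by H\"older interpolation between $j=\lfloor D+1\rfloor$ and $j+1$.
\end{enumerate}
As written, your proposal contains neither route.
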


\begin{proof} For $n=1$, we have $Z_0^{(1)}=1$. Fix $n\ge2$. 
We first prove that there exists a constant $C_1\in(0,\infty)$ such that, for $0\le k\le n-2$ and $z\in\mathbb Z_+$,
\begin{align}\label{eq:increment}
\E_1\bigl[(Z_{k+1}^{(n)})^{2(D+1)}-(Z_k^{(n)})^{2(D+1)}
\mid Z_k^{(n)}=z\bigr]
\le C_1z^{2D+1}.
\end{align}
For $0\le z\le M$, by monotonicity of $\Psi_{k+1}$, we have $\Psi_{k+1}(z)\le\Psi_{k+1}(M).$
Notice also that
$$
\Psi_{k+1}(M)
=A_{k+1}+H_{M-B_{k+1}}
\le_{\mathrm{st}}M+H_M.
$$
Hence, for $1\le z\le M$,
\begin{equation}\label{eq:Zbound1}
\mathbb E_1\!\left[(Z_{k+1}^{(n)})^{2(D+1)}
\mid Z_k^{(n)}=z\right]
\le\mathbb E[(M+H_M)^{2(D+1)}]
=:C_2<\infty.
\end{equation}

Since $\mathbb E[A_{k+1}-B_{k+1}]=-D$, for $z>M$, we have
$$
\begin{aligned}
\mathbb E[\Psi_{k+1}(z)]&=z-D,\\
\operatorname{Var}(\Psi_{k+1}(z))
&=2\bigl(z-\mathbb E[B_{k+1}]\bigr)
+\operatorname{Var}(A_{k+1}-B_{k+1})
\le C_3z.
\end{aligned}
$$
For $r\ge1$, we write $H_r$ as a sum of $r$ independent non-negative geometric random variables with mean $1$. Using the fact that $D\le M$ and H\"older's inequality, we obtain
\begin{equation}\label{eq:Gmom}
\mathbb E[|H_r-r|^{2D+2}]
\le
\mathbb E[|H_r-r|^{2M+2}]^{\frac{D+1}{M+1}}
\le C_4r^{D+1}.
\end{equation}
By \eqref{eq:Gmom}, we thus have
$$
\begin{aligned}
\mathbb E\!\left[
\left|\Psi_{k+1}(z)-(z-D)\right|^{2D+2}
\right]
&\le
2^{2D+1}\mathbb E\!\left[
\left|H_{z-B_{k+1}}-(z-B_{k+1})\right|^{2D+2}
\right]\\
&\quad+
2^{2D+1}\mathbb E\!\left[
|A_{k+1}-B_{k+1}+D|^{2D+2}
\right]\\
&\le C_5z^{D+1}.
\end{aligned}
$$
Using the Taylor expansion with the integral form of the remainder term, for $z>M$ and $t\ge0$, we have
\begin{equation}
\begin{aligned}
0
&\le t^{2D+2}-(z-D)^{2D+2}
-(2D+2)(z-D)^{2D+1}(t-z+D)\\
&=(2D+2)(2D+1)(t-z+D)^2
\int_0^1(1-u)\bigl(z-D+u(t-z+D)\bigr)^{2D}\rmd u\\
&\le C_6\left((z-D)^{2D}(t-z+D)^2
+|t-z+D|^{2D+2}\right).
\end{aligned}
\end{equation}
Taking $t=\Psi_{k+1}(z)$ and using $\mathbb E[\Psi_{k+1}(z)]=z-D$, we obtain
$$
\begin{aligned}
\mathbb E[\Psi_{k+1}(z)^{2D+2}]-z^{2D+2}
&\le (z-D)^{2D+2}-z^{2D+2}
+C_6\left(C_3z^{2D+1}+C_5z^{D+1}\right)\\
&\le C_6(C_3+C_5)z^{2D+1}.
\end{aligned}
$$
Also note that $\Psi_{k+1}(0)=0$. Choosing $C_1=\max\{C_2,C_6(C_3+C_5)\},$
we conclude that \eqref{eq:increment} holds true for every $z\in\mathbb Z_+$.

By \eqref{eq:increment} and $Z_0^{(n)}=1$, all the moments below are finite by induction. Set
$$
x_k=\E_1[(Z_k^{(n)})^{2(D+1)}].
$$
Taking expectations in \eqref{eq:increment}, we have
$$
x_{k+1}\le x_k+
C_1\E_1\!\left[
(Z_k^{(n)})^{2D+1}\mathds1_{\{Z_k^{(n)}\ge1\}}
\right].
$$
Using H\"older's inequality with exponents $\frac{2D+2}{2D+1}$ and $2D+2$, we obtain
\begin{align*}
x_{k+1}
&\le x_k+
C_1\E_1[(Z_k^{(n)})^{2D+2}]^{\frac{2D+1}{2D+2}}
\E_1[(\mathds1_{\{Z_k^{(n)}\ge1\}})^{2D+2}]^{\frac1{2D+2}}\\
&=x_k+
C_1x_k^{\frac{2D+1}{2D+2}}
\P_1(Z_k^{(n)}\ge1)^{\frac1{2D+2}}.
\end{align*}
Recall that $F_n(k)$ counts the steps from $v_k$ to $v_{k+1}$ made by $\X^{(v_n)}$ before its first return to $\rho$. Thus $\{F_n(k)\ge1\}$ is the event that $\X^{(v_n)}$ reaches $v_{k+1}$ before returning to $\rho$. By the restriction principle, $\X^{(v_n)}$ and $\X^{(v_{k+1})}$ agree until their first visit to $v_{k+1}$ or their first return to $\rho$. Using \eqref{eq:forward.comp}, we therefore obtain
$$
\P_1(Z_k^{(n)}\ge1)=\P(F_n(k)\ge1)=r_{k+1},
$$
Applying Proposition~\ref{thm:ruin_prob}, we thus have
$$
\P_1(Z_k^{(n)}\ge1) \le C_7(k+1)^{-D-1}.
$$
Consequently,
\begin{equation}\label{eqn:reccurence}
x_{k+1}
\le x_k+
C_8x_k^{\frac{2D+1}{2D+2}}(k+1)^{-1/2},
\end{equation}
where $C_8=C_1C_7^{1/(2D+2)}$. Choosing
$$
C_0\ge
\max\left\{1,\left(\frac{C_8}{D+1}\right)^{2(D+1)}\right\},
$$
we complete the proof by showing that
$$x_k\le C_0(k+1)^{D+1}.$$
We proceed by induction. Since $Z_0^{(n)}=1$, we have
$x_0=1\le C_0$. Assume that the assertion holds true up to $k$. By \eqref{eqn:reccurence}, we have
\begin{align*}
x_{k+1}
&\le C_0(k+1)^{D+1}
+C_8\bigl(C_0(k+1)^{D+1}\bigr)^{\frac{2D+1}{2D+2}}
(k+1)^{-1/2}
\\
&= C_0(k+1)^{D+1}
+C_8C_0^{\frac{2D+1}{2D+2}}(k+1)^D\le C_0(k+2)^{D+1},
\end{align*}
where in the last inequality, we use the fact that $(D+1)(k+1)^D
\le(k+2)^{D+1}-(k+1)^{D+1}$. The proof completes by the induction principle.
   
\end{proof}




\section{Quasi-independent percolation} \label{sec:perc}
In this section we prove Theorem~\ref{main.thm}. We first recall percolation criteria from \cite{Lyons1989,LP2016} and then define the ruin percolation for $\X$, a bond percolation on $\mathcal T$ in which the edge $\{v^{-1},v\}$ is open if the extension process $\X^{(v)}$ visits $v$ before its first return to $\rho$. Using Propositions~\ref{prop.res}, \ref{thm:ruin_prob}, \ref{lem:genruin} and~\ref{prop.moment-drift}, we show that the ruin percolation is quasi-independent. Applying the percolation criteria together with the ruin probability estimates, we show that the event that the open cluster containing $\rho$ is infinite has probability $0$ when ${\rm br}_r(\mathcal T)<D+1$ and positive probability when ${\rm br}_r(\mathcal T)>D+1$. Finally, combining these conclusions with the connection between the ruin percolation and the recurrence or transience of $\X$, and with a $0-1$ law that we establish in this section, we complete the proof.

\subsection{Preliminary notations and results}\label{sec:quasip}
Let $\Tcal=(V,E)$ be a infinite locally finite tree. For each edge $e\in E$, we assign a Bernoulli random variable $\xi_e$ with parameter $p_e\in [0,1]$. The Bernoulli field $(\xi_e)_{e\in E}$ is not necessarily independent nor identically distributed. We say that $e$ is \textbf{open} if $\xi_e=1$, and \textbf{closed} otherwise. Assume that $(\xi_e)_{e\in E}$ is governed by a probability measure $\mathbf{Q}$. We call $\mathbf Q$ a \textbf{bond percolation} on $\Tcal$. 
After removing all closed edges from $E$, we obtain connected components comprising of open edges, which we call \textbf{clusters}. If two vertices $x$ and $y$ belong to the same cluster, we write $x\leftrightarrow y$. If the cluster containing $x$ has infinitely many vertices, we write $x\leftrightarrow \infty$. For two vertices $x,y$, we denote by $x\wedge y$ their nearest common ancestor.

A bond percolation $\mathbf{Q}$ is \textbf{quasi-independent} if there exists a constant $M>0$ where 
$$\mathbf{Q}(\rho\leftrightarrow x, \rho\leftrightarrow y \ |\ \rho\leftrightarrow x\wedge y )\le M\cdot \mathbf{Q}(\rho\leftrightarrow x \ |\ \rho\leftrightarrow x\wedge y )\mathbf{Q}(\rho\leftrightarrow y \ |\ \rho\leftrightarrow x\wedge y )$$
for each $x, y\in V$.

For each edge $e=\{e^-,e^+\} \in E$ with $|e^+|=|e^-|+1$, let 
\begin{align}\label{adt.c} \text{ $c(e) = 1$ for $|e| = 1$}\quad \text{and}\quad
    c(e) = \frac{\mathbf{Q}(\rho\leftrightarrow e^+)}{\mathbf{Q}(e \text{ is closed} \mid \rho\leftrightarrow e^-)}.
\end{align}
for $|e|>1$. We call $(c(e))_{e\in E}$ the \textbf{adapted conductances} of the percolation $\mathbf{Q}$. We will use the following result:

\begin{proposition}[Theorem 5.19 in \cite{LP2016}] \label{lyons1989}
    Let $\mathbf{Q}$ be a quasi-independent percolation process taking place on an infinite locally finite tree $\Tcal = (V,E)$.
    \begin{itemize}
        \item [(i)] If \(\inf_{\pi \in \Pi} \sum_{e \in \pi} \mathbf{Q}(\rho \leftrightarrow e) = 0 \text{ then } \mathbf{Q}(\rho \leftrightarrow \infty) = 0\);
        \item [(ii)] If there exists a non-zero flow $\theta$ such that $\sum_{e\in E} \frac{\theta(e)^2}{c(e)}<\infty$ then  $\mathbf{Q}(\rho \leftrightarrow \infty) > 0.$
    \end{itemize}
\end{proposition}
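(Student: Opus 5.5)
Part (i) needs no quasi-independence; part (ii) is a second-moment (Paley--Zygmund) argument in which quasi-independence controls the second moment and the adapted conductances \eqref{adt.c} make that bound equal to an energy.

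\textbf{Part (i).} Assume $\inf_{\pi\in\Pi}\sum_{e\in\pi}\mathbf Q(\rho\leftrightarrow e)=0$.
\begin{itemize}
\item Suppose the open cluster of $\rho$ is infinite. Since $\Tcal$ is locally finite, König's lemma gives an infinite open self-avoiding path from $\rho$.
\item This path crosses every cutset $\pi$, so some edge $e\in\pi$ satisfies $\rho\leftrightarrow e$ (meaning $\rho\leftrightarrow e^+$).
\item Hence $\{\rho\leftrightarrow\infty\}\subseteq\bigcup_{e\in\pi}\{\rho\leftrightarrow e\}$, and a union bound gives $\mathbf Q(\rho\leftrightarrow\infty)\le\sum_{e\in\pi}\mathbf Q(\rho\leftrightarrow e)$.
\item This holds for every cutset $\pi$. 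Taking the infimum over $\pi\in\Pi$ gives $0$.
\end{itemize}

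\textbf{Part (ii), setup.} Normalise the flow $\theta$ to have unit strength and write $\theta(x)=\theta(\{x^{-1},x\})$. Fix a level $n$ and set
$$W_n:=\sum_{|x|=n}\theta(x)\frac{\mathds 1_{\{\rho\leftrightarrow x\}}}{\mathbf Q(\rho\leftrightarrow x)},$$
summing only over $x$ with $\mathbf Q(\rho\leftrightarrow x)>0$.
\begin{itemize}
\item If $\mathbf Q(\rho\leftrightarrow x)=0$ for some $x$ with $|x|\ge2$, then the edge $e$ entering $x$ has $c(e)=0$. Finite energy then forces $\theta(e)=0$, so no flow is lost by this restriction.
\item If $\mathbf Q(\rho\leftrightarrow x)=0$ with $|x|=1$, the single edge $\{\rho,x\}$ is closed almost surely. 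No open path can pass through $x$, so delete the subtree through $x$ and rescale $\theta$; this does not affect finiteness of the energy.
\item With this convention, $\mathbf E W_n=\sum_{|x|=n}\theta(x)=1$.
\end{itemize}

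\textbf{Part (ii), second moment.} For $|x|=|y|=n$ let $z=x\wedge y$. Quasi-independence gives
$$\mathbf Q(\rho\leftrightarrow x,\rho\leftrightarrow y)\le M\,\mathbf Q(\rho\leftrightarrow z)\,\mathbf Q(\rho\leftrightarrow x\mid\rho\leftrightarrow z)\,\mathbf Q(\rho\leftrightarrow y\mid\rho\leftrightarrow z)=M\,\frac{\mathbf Q(\rho\leftrightarrow x)\,\mathbf Q(\rho\leftrightarrow y)}{\mathbf Q(\rho\leftrightarrow z)}.$$
Therefore
$$\mathbf E W_n^2\le M\sum_{|x|=|y|=n}\frac{\theta(x)\theta(y)}{\mathbf Q(\rho\leftrightarrow x\wedge y)}.$$

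\textbf{Main step: telescoping.} For $|e|>1$,
$$\frac1{\mathbf Q(\rho\leftrightarrow e^+)}-\frac1{\mathbf Q(\rho\leftrightarrow e^-)}=\frac{1-\mathbf Q(e\text{ open}\mid\rho\leftrightarrow e^-)}{\mathbf Q(\rho\leftrightarrow e^+)}=\frac1{c(e)}.$$
For $|e|=1$ the increment is $1/\mathbf Q(e\text{ open})-1$, a finite constant depending only on the finitely many edges at $\rho$; it is bounded by $C/c(e)$ since $c(e)=1$ there. Telescoping along the path from $\rho$ to $z$ gives
$$\frac1{\mathbf Q(\rho\leftrightarrow z)}\le 1+C\sum_{e\le z}\frac1{c(e)},$$
where $e\le z$ means $e$ lies on the path from $\rho$ to $z$. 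Substitute this and exchange the order of summation. The pairs $(x,y)$ at level $n$ whose common ancestor lies below $e$ carry total mass $\theta(e)^2$, by conservation of the flow at each vertex. Hence
$$\mathbf E W_n^2\le M\Big(1+C\sum_{e}\frac{\theta(e)^2}{c(e)}\Big)=:K<\infty,$$
uniformly in $n$. This identity and the exchange of summation are the only delicate bookkeeping, and I expect the $|e|=1$ term to be the main nuisance.

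\textbf{Conclusion.} Paley--Zygmund (Cauchy--Schwarz) gives
$$\mathbf Q(W_n>0)\ge\frac{(\mathbf E W_n)^2}{\mathbf E W_n^2}\ge K^{-1}.$$
The event $\{W_n>0\}$ is contained in $\{\rho\text{ is connected to level }n\}$, and these events decrease in $n$. Their intersection is $\{\rho\leftrightarrow\infty\}$, because connection to every level forces an infinite cluster in a locally finite tree. Continuity from above then yields $\mathbf Q(\rho\leftrightarrow\infty)\ge K^{-1}>0$.
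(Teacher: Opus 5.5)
Your argument is correct. It is the standard proof of the cited result: the paper does not prove this proposition itself but imports it from Lyons--Peres, where (i) is the first-moment union bound over a cutset and (ii) is this weighted second-moment method. In that method, quasi-independence bounds $\mathbf{Q}(\rho\leftrightarrow x,\rho\leftrightarrow y)$ by $M\,\mathbf{Q}(\rho\leftrightarrow x)\mathbf{Q}(\rho\leftrightarrow y)/\mathbf{Q}(\rho\leftrightarrow x\wedge y)$, and the adapted conductances telescope $1/\mathbf{Q}(\rho\leftrightarrow x\wedge y)$ into the energy of $\theta$. Your separate treatment of the level-one edges is the one adaptation the paper's convention $c(e)=1$ for $|e|=1$ requires, and it does not affect finiteness of the energy.
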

\subsection{Ruin percolation}\label{sec:ruinper}
Let $\tau_v=\inf\{n\ge0: X_n=v\}$ and $\tau_v^+=\inf\{n>\tau_v: X_n=v\}$ be respectively the first hitting time of vertex $v$ and the first return time to vertex $v$. 
Define
$$
\mathcal{C} ( \rho)=\left\{\{v^{-1},v\} \in E \colon \tau_v  < \tau_{\rho}^+ \right\}. 
$$ 

Let $\tau_u(v)$ and $\tau_{u}^{+}(v)$ be respectively the first hitting times and the return time to vertex $u$ associated with $\X^{(v)}$.
Let 
$$\mathcal{C}_{\rm CP} ( \rho)=\left\{\{v^{-1},v\} \in E \colon \tau_v(v)  < \tau_{\rho}^+(v) \right\}. $$
We say an edge $e \in E$ is \textbf{open} if $e \in{\mathcal{C}}_{\mathrm{CP}} ( \rho)$, and \textbf{closed} otherwise. We define a correlated percolation by removing all closed edges, and we refer to this model as the \textbf{ruin percolation}.

The following result follows by the same proof as in \cite[Lemma 3.3]{LN2026} and \cite[Lemma 7.1]{CKS2020}.

\begin{lemma}[Lemma 3.3 in \cite{LN2026}, Lemma 7.1 in \cite{CKS2020}] \label{per2.lem} We have
\begin{align*}
    \P(\tau^+_{\rho} = \infty) = \P (|\mathcal{C}(\rho)| = \infty) = \P(|\mathcal{C}_{\rm CP}(\rho)| = \infty).
\end{align*} 
\end{lemma}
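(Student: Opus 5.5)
We need to prove $\P(\tau_\rho^+=\infty)=\P(|\mathcal C(\rho)|=\infty)=\P(|\mathcal C_{\rm CP}(\rho)|=\infty)$. We prove the two equalities separately, following the argument of \cite[Lemma 7.1]{CKS2020}.

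\textbf{Step 1: $\{\tau_\rho^+=\infty\}=\{|\mathcal C(\rho)|=\infty\}$ a.s.} On $\{\tau_\rho^+=\infty\}$ the walk never returns to $\rho$ after time $0$. Since $\Tcal$ is locally finite, every finite set $F\ni\rho$ has finite boundary. The walk cannot stay forever in such an $F$ without returning to $\rho$: within $F$ it sees only finitely many distinct vertices, each of which eventually loses its cookies. From then on the transitions are uniform, and a uniform-step walk on a finite connected set returns to $\rho$ a.s. (a Borel--Cantelli argument on repeated excursions; the finitely many excited steps do not affect this). Hence $\X$ visits infinitely many vertices before $\tau_\rho^+$, and each such $v$ satisfies $\tau_v<\tau_\rho^+$, so $|\mathcal C(\rho)|=\infty$. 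Conversely, if $\tau_\rho^+<\infty$, only the finitely many vertices visited before time $\tau_\rho^+$ can belong to $\mathcal C(\rho)$. So $\mathcal C(\rho)$ is finite.

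\textbf{Step 2: $\{|\mathcal C(\rho)|=\infty\}=\{|\mathcal C_{\rm CP}(\rho)|=\infty\}$ a.s.} This comparison uses Proposition~\ref{prop.res}. Fix $v$ and consider the restriction of $\X$ to $\mathcal P_v$ up to the killing time $K_v$.

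First suppose $\tau_v<\tau_\rho^+$. Before $\tau_v$, the restriction records every move of $\X$ along $\mathcal P_v$, and the restriction reaches $v$ before returning to $\rho$ within the first $K_v$ steps. The restriction principle then gives $\tau_v(v)<\tau_\rho^+(v)$. Hence $\mathcal C(\rho)\subseteq\mathcal C_{\rm CP}(\rho)$.

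Now suppose $\tau_\rho^+<\infty$. Take any $v$ not visited before $\tau_\rho^+$. There are two cases, depending on when the restriction of $\X$ to $\mathcal P_v$ returns to $\rho$.
\begin{itemize}
\item If the restriction returns to $\rho$ before hitting $v$, this happens at a step $k\le K_v$. The return to $\rho$ is itself a visit to $\mathcal P_v$, so the step is recorded. By Proposition~\ref{prop.res}, $\X^{(v)}$ also returns to $\rho$ before $v$.
\item Otherwise the restriction is killed before either event. Then $\X$ visits $\mathcal P_v$ only finitely often, which contradicts $\X$ returning to $\rho$. Indeed, $\rho\in\mathcal P_v$, so on $\{\tau_\rho^+<\infty\}$ the return is recorded before killing.
\end{itemize}
So $\mathcal C_{\rm CP}(\rho)$ is contained in the finite set of vertices visited before $\tau_\rho^+$. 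Combined with the first inclusion, $|\mathcal C(\rho)|=\infty$ if and only if $|\mathcal C_{\rm CP}(\rho)|=\infty$.

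\textbf{Main obstacle.} The delicate point is that $\X^{(v)}$ and the restriction agree only up to step $K_v$. We must check that the relevant events, namely hitting $v$ and returning to $\rho$, are always decided at steps $k\le K_v$ on the events considered. This holds because each of those events is itself a visit of $\X$ to $\mathcal P_v$. We also need the tree structure, which guarantees that $\X$ enters $\mathcal P_v$ from off-path vertices only at the vertex it last left.
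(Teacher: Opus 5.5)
Your Step 1 and the inclusion $\mathcal C(\rho)\subseteq\mathcal C_{\rm CP}(\rho)$ are fine. For Step 1, the cleanest justification is the propagation argument in the proof of Lemma~\ref{lem:zero.one}: a vertex visited infinitely often forces all its neighbours to be visited infinitely often, so the range of $\X$ is a.s.\ infinite.

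\textbf{The gap is the reverse containment in Step 2.} The restriction to $\mathcal P_v$ deletes consecutive repetitions. So an excursion of $\X$ from $\rho$ into a child $w\neq v_1$ and back is invisible to it. The visit to $\rho$ at time $\tau_\rho^+$ is recorded, but it is not a return of the restriction to $\rho$. This is the flip side of your own remark that $\X$ re-enters $\mathcal P_v$ only at the vertex it left. Your case split therefore omits the case in which the restriction reaches $v$ before returning to $\rho$, although $\X$ has already returned.

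Concretely, suppose $X_1=w\neq v_1$, $\X$ comes back to $\rho$, and then runs down $\mathcal P_v$ to $v$ without revisiting $\rho$. Then $v$ is not visited before $\tau_\rho^+$, yet $\X^{(v)}$ runs $\rho,v_1,\dots,v$, so $\{v^{-1},v\}\in\mathcal C_{\rm CP}(\rho)$. If $\X$ afterwards never returns to $\rho$, then $\mathcal C_{\rm CP}(\rho)$ is infinite while $\tau_\rho^+<\infty$. When $\deg(\rho)\ge2$ and $\P(\tau_\rho^+=\infty)>0$, this has positive probability. So the a.s.\ identity of events you claim is false.

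This cannot be patched, because for $\deg(\rho)\ge2$ the second equality itself fails. For a child $w$ of $\rho$, let $\mathcal T_w$ be as in the proof of Lemma~\ref{lem:zero.one}, i.e.\ $\rho$, $w$ and the descendants of $w$.
\begin{itemize}
\item The open edges of $\mathcal T_w$ depend only on $\boldsymbol\xi$ at vertices of $\mathcal T_w\setminus\{\rho\}$, since from $\rho$ the extension always steps to $w$. Hence they are independent over $w$.
\item They form the ruin cluster of the MERW on $\mathcal T_w$. There the root has degree one and every return to $\rho$ comes from $w\in\mathcal P_v$, so your argument does work.
\item Consequently, up to a null set, this branch cluster is infinite exactly when that walk never returns to $\rho$. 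Call the probability of that event $q_w$.
\end{itemize}
Hence
\[
\P\bigl(|\mathcal C_{\rm CP}(\rho)|=\infty\bigr)=1-\prod_{w\sim\rho}(1-q_w),\qquad
\P(\tau_\rho^+=\infty)=\frac{1}{\deg(\rho)}\sum_{w\sim\rho}q_w .
\]
For simple random walk ($\alpha_j\equiv0$) on the binary tree, $q_w=1/2$ for both children, and these two probabilities are $3/4$ and $1/2$.

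What is true is
\[
\P(\tau_\rho^+=\infty)=\P(|\mathcal C(\rho)|=\infty)\le\P(|\mathcal C_{\rm CP}(\rho)|=\infty)\le\deg(\rho)\,\P(\tau_\rho^+=\infty).
\]
This is all the proof of Theorem~\ref{main.thm} needs from this lemma, since only vanishing versus positivity is transferred. If $\deg(\rho)=1$, your argument proves the stated equalities verbatim.
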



In this section we aim to prove the following:

\begin{proposition}\label{prop:per}
        The ruin percolation is quasi-independent. 
\end{proposition}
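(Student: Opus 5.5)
Write $w=x\wedge y$, with $|w|=k$, $|x|=k+a$, $|y|=k+b$, and $\mathbf Q$ for the law of the ruin percolation. I would first reduce the quasi-independence inequality to estimates on ruin probabilities. For any vertex $u$, $\rho\leftrightarrow u$ means that every edge on $\mathcal P_u$ is open. By the restriction principle (Proposition~\ref{prop.res}) and the monotonicity of hitting along the path, this is the event that $\X^{(u)}$ reaches $u$ before returning to $\rho$. So $\mathbf Q(\rho\leftrightarrow u)=r_{|u|}$ along the ray through $u$, and Proposition~\ref{thm:ruin_prob} gives
\[
C^{-1}|u|^{-D-1}\le \mathbf Q(\rho\leftrightarrow u)\le C|u|^{-D-1}
\]
uniformly. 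Consequently $\mathbf Q(\rho\leftrightarrow x\mid\rho\leftrightarrow w)\asymp (k/(k+a))^{D+1}$, and similarly for $y$. It therefore suffices to prove
\[
\mathbf Q(\rho\leftrightarrow x,\rho\leftrightarrow y)\le C\,k^{-D-1}\Big(\frac{k}{k+a}\Big)^{D+1}\Big(\frac{k}{k+b}\Big)^{D+1}.
\]

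For the joint bound I would condition on the behaviour of $\X^{(x)}$ up to its first visit to $w$ and return to $\rho$; this is the content of Lemma~\ref{lem:branch_corr}, which I would prove first. On $\{\rho\leftrightarrow w\}$, let $N$ be the number of forward crossings of the edge entering $w$ (equivalently, visits to $w$) made by the common restriction to $\mathcal P_w$ before the first return to $\rho$. By Proposition~\ref{prop.res}, $\X^{(x)}$ and $\X^{(y)}$ agree with $\X^{(w)}$ on $\mathcal P_w$. For $\X^{(x)}$ to reach $x$ before returning to $\rho$, the excursions from $w$ into the branch toward $x$ must reach $x$ within at most $N$ departures from $w$ toward $x$. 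This is a generalised ruin event on the subpath from $w$ to $x$, with cookies at $w$ possibly partly consumed. The exponential variables at vertices strictly below $w$ on the two branches are disjoint and independent of those on $\mathcal P_w$. Hence, conditionally on the $\sigma$-field generated by $\boldsymbol\xi$ on $\mathcal P_w$ (which fixes $N$), the two branch events are conditionally independent up to the shared randomness at $w$ itself. I would handle that shared randomness by bounding each branch event by the worst case over the cookie state at $w$ and using the cookie-index coupling of Section~\ref{sec:ext}.

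Applying Proposition~\ref{lem:genruin} to the shifted paths, which it allows because its bound is uniform over finite paths and over $j$, each branch probability is at most $C\,(1\wedge (N/(a+1))^{D+1})$. This gives
\[
\mathbf Q(\rho\leftrightarrow x,\rho\leftrightarrow y)\le C\,\E\Big[\mathds 1_{\{\rho\leftrightarrow w\}}N^{2(D+1)}\Big](a+1)^{-D-1}(b+1)^{-D-1}.
\]
The moment is controlled by Proposition~\ref{prop.moment-drift}: $N$ has the law of $Z^{(n)}_{k}$-type forward counts, so $\E_1[N^{2(D+1)}]\le C_0(k+1)^{D+1}$. Since $N=0$ off $\{\rho\leftrightarrow w\}$, the indicator costs nothing. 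The right side is then $C(k+1)^{D+1}(a+1)^{-D-1}(b+1)^{-D-1}$. Comparing with the target, which is of order $k^{D+1}(k+a)^{-D-1}(k+b)^{-D-1}$, the two agree when $a,b\gtrsim k$. When $a$ (or $b$) is smaller than $k$, I would instead replace the corresponding factor by the trivial bound $1$, using the minimum with $1$ in Proposition~\ref{lem:genruin}. I would then use $\E[N^{D+1}\mathds 1]\le \E[N^{2(D+1)}]^{1/2}\mathbf Q(\rho\leftrightarrow w)^{1/2}\le C$ through Cauchy–Schwarz, and when both are small, simply $\mathbf Q(\rho\leftrightarrow w)\le Ck^{-D-1}$. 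Checking each case reproduces the required product up to constants.

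The main obstacle is the conditional independence step. The two branch events share the cookie stack at $w$: the first $M$ departures from $w$ are prescribed by common variables $\xi(w,\cdot,\ell)$, and the order in which the walk enters the two branches is interleaved. I would handle this by expressing each branch event through the sequence of departures from $w$ into that branch, taken in order, and bounding it monotonically by the event that the branch walk, started fresh at $w$, reaches its endpoint within $N$ entries. That bound is exactly $r^{(N)}$ for the extension on the subpath, which depends only on variables below $w$ once $N$ is dominated. I then need care that $N$ counts visits of the restricted walk to $\mathcal P_w$ before returning to $\rho$, and that excursions into the $y$-branch do not increase the entries into the $x$-branch beyond $N$. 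This holds because every entry to either branch passes through a visit to $w$ counted by $N$.
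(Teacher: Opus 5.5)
Your plan has the same structure as the paper's proof. You reduce to $\mathbf Q(\rho\leftrightarrow x,\rho\leftrightarrow y)\le C k^{-D-1}(k/(k+a))^{D+1}(k/(k+b))^{D+1}$, condition on the number $N$ of crossings of the edge into $w$, bound each branch with Proposition~\ref{lem:genruin}, and control $N$ with Proposition~\ref{prop.moment-drift}. Your closing casework with Cauchy--Schwarz is a valid substitute for the paper's bound $\E[(1+N_e/n)^{2D+2}\mid N_e\ge1]\le C$.

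\textbf{The gap is in the branch step.} The number of steps from $w$ into the $x$-branch before the first return to $\rho$ is \emph{not} at most $N$. Your $N$ counts visits of the restriction to $\mathcal P_w$. In that restriction, all returns to $w$ from excursions into the two branches are merged into a single visit. In the walk itself, or in the extension to $\mathcal P_x\cup\mathcal P_y$, the walk can re-enter a branch after each such return, before stepping up to $w^{-1}$. So the sentence ``every entry to either branch passes through a visit to $w$ counted by $N$'' is false. The number of attempts available to the $x$-branch is $L_1$, the number of steps from $w$ into that branch before the first return to $\rho$. Once the cookies at $w$ are used, conditionally on $N=j$ this behaves like a sum of $j$ geometric variables of mean $1$, so it can exceed $N$. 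Since $r^{(\ell)}$ is non-decreasing in $\ell$, replacing $L_1$ by $N$ goes the wrong way on $\{L_1>N\}$. Hence the bound $C\bigl(1\wedge (N/(a+1))^{D+1}\bigr)$ does not follow pathwise from Proposition~\ref{lem:genruin}.

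\textbf{How to close it.} The missing ingredient is the comparison in the paper's proof of Lemma~\ref{lem:branch_corr}.
\begin{enumerate}
\item The obstacle you single out is not the real one. The triple $(N,L_1,L_2)$ is determined by $\boldsymbol\xi$ on $\mathcal P_w$. The extensions on $[w,x]$ and $[w,y]$ started at $w$ use only variables strictly below $w$, because the step out of the endpoint $w$ is forced. So given $(N,L_1,L_2)$, the two branch events are exactly independent, with probabilities $r^{(L_1)}_{[w,x]}$ and $r^{(L_2)}_{[w,y]}$. No worst case over cookie states at $w$ is needed.
\item What you must add is a comparison of $L_1+L_2$ with $N$. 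The count $N$ does not depend on the variables at $w$, since the step out of $w$ in $\X^{(w)}$ is forced. Hence, given $N=j$, the sum $L_1+L_2$ is stochastically dominated by $M+\sum_{i=1}^j G_i$, where the $G_i$ are i.i.d.\ with $\P(G_i=\ell)=\frac13(\frac23)^\ell$.
\item The paper then uses
\[
\prod_{i=1}^2\bigl(1\wedge (L_i/m_i)^{D+1}\bigr)\le\prod_{i=1}^2\bigl(1\wedge (j/m_i)^{D+1}\bigr)\bigl(1+(L_1+L_2)/j\bigr)^{2D+2}.
\]
The last factor has conditional expectation bounded uniformly in $j$.
\end{enumerate}
In your unconditional bookkeeping, the same domination gives $\E\bigl[(L_1+L_2)^{2D+2}\1_{\{N\ge1\}}\bigr]\le C\,\E[N^{2D+2}]\le Ck^{D+1}$. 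After that, your three cases go through with $L_1,L_2$ in place of $N$.

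Finally, your target bound degenerates at $k=0$. The case $w=\rho$ must be handled separately; there the two branch events are independent.
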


Consider $e_1, e_2\in E$ with $e=e_1\wedge e_2$. 
Let $\widetilde \X$ be the the extension of $\X$ on $\widetilde{\mathcal{T}}:=\mathcal{P}_{e_1^+}\cup \mathcal{P}_{e_2^+}$. 
For $e=\{e^-, e^+\}\in E$ with $|e^+|=|e^-|+1$, let
$$N_e:=\sum_{k=0}^{\widetilde\tau_\rho^+-1}
\mathds1_{\{\widetilde X_k=e^+,\,\widetilde X_{k+1}=e^-\}}.
$$
Then, by the law of total probability,
\begin{align}
&\P\bigl(e_1,e_2\in \mathcal C_{\rm CP}(\rho)\mid e\in \mathcal C_{\rm CP}(\rho)\bigr)  =\sum_{j\ge1}\P(N_e=j\mid N_e\ge 1) \P\bigl(e_1,e_2\in \mathcal C_{\rm CP}(\rho)\mid N_e=j\bigr).
\label{eq:branch-mixture}
\end{align}

We have  
\begin{align}\label{eq:momentZ}
\E[ N_{e}^q \mid N_{e}\ge 1]=\frac{\sum_{j=1}^{\infty} j^q \P( N_e =j) }{\P( N_{e}\ge 1)},
\end{align}
where we note that \begin{itemize}
    \item $\P( N_{e}\ge 1)$ is the ruin probability of the extension process on $\mathcal{P}_e$;
    \item $\sum_{j=1}^{\infty} j^q \P( N_e =j)$ is related to the $q$-th moment of the Markov chain $Z_n$.
\end{itemize}
Hence, to prove that the ruin percolation is quasi-independent, it is sufficient to prove that
\begin{lemma}\label{lem:branch_corr}
Let $e_1,e_2\in E$ with $e=e_1\wedge e_2$, and assume that $e_1$ and $e_2$ lie strictly below $e$ in two distinct descendant subtrees. Then there exists a constant $C<\infty$, depending only on $M$ and $(\alpha_k)_{1\le k\le M}$, such that for every integer $j\ge1$ with $\P(N_e=j)>0$, we have
$$
\P(e_1,e_2\in\mathcal C_{\rm CP}(\rho)\mid N_e=j) \le C \left(1\wedge\left(\frac{j}{|e_1|-|e|+1}\right)^{D+1}\right) \left(1\wedge\left(\frac{j}{|e_2|-|e|+1}\right)^{D+1}\right).
$$
\end{lemma}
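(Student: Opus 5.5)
The plan is to condition on the behaviour of the extension $\widetilde\X$ at the branching vertex $e^+$, and then treat the two branches below $e^+$ as conditionally independent generalised ruin problems, each handled by Proposition~\ref{lem:genruin}.

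\textbf{Setup at $e^+$.} In $\widetilde{\mathcal T}=\mathcal P_{e_1^+}\cup\mathcal P_{e_2^+}$ the vertex $e^+$ has three neighbours: $e^-$, the child $c_1$ on the branch towards $e_1^+$, and the child $c_2$ on the branch towards $e_2^+$. As in Section~\ref{sec:markov}, the clocks $\xi(e^+,\cdot,\cdot)$ define a random sequence of choices at $e^+$ among $\{e^-,c_1,c_2\}$. This sequence has $M$ cookie-driven entries (some omitted, as in the construction of $R^{(v)}$) followed by uniform choices. Let $K_i$ be the number of steps $e^+\to c_i$ made before the $j$-th step $e^+\to e^-$. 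The whole trajectory of $\widetilde\X$ before $\widetilde\tau^+_\rho$ is a concatenation of:
\begin{itemize}
\item excursions above $e^+$, driven by clocks at vertices of $\mathcal P_{e^-}$;
\item excursions into branch $i$, driven only by clocks at vertices of branch $i$ strictly below $e^+$.
\end{itemize}
These three families of clocks are disjoint, hence independent. The event $\{N_e=j\}$ says that the walk returns to $\rho$ after exactly $j$ upward crossings of $e$. I will argue that this event, together with the event that $e_i^+$ is reached before $\widetilde\tau^+_\rho$, is contained in the event that the branch-$i$ walk reaches $e_i^+$ within its first $K_i$ excursions from $e^+$. The branch-$i$ excursions consumed before the $j$-th step up from $e^+$ number at most $K_i$, and each either returns to $e^+$ or hits $e_i^+$.

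\textbf{Reduction to generalised ruin.} Consider the restriction of the branch-$i$ excursions to the path from $e^+$ to $e_i^+$, viewed with $e^+$ as a root. By the restriction principle (Proposition~\ref{prop.res}) and the recursion \eqref{eq:B.recursion}, which only uses the functions $\Phi$ at vertices strictly below the root, this is an extension process on a finite path of length $n_i:=|e_i|-|e|$. The event in question is that this process reaches $e_i^+$ before its $K_i$-th return to the root. Proposition~\ref{lem:genruin} is uniform over all finite paths and all degrees, so with $g_i(k):=C\bigl(1\wedge (k/(n_i+1))^{D+1}\bigr)$ the conditional probability given $K_i=k$ is at most $g_i(k)$. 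Since branch 1 and branch 2 use disjoint clocks, and both are independent of the clocks at $e^+$ and above, I plan to show
$$
\P(e_1,e_2\in\mathcal C_{\rm CP}(\rho),\,N_e=j)\le \E\bigl[g_1(K_1)g_2(K_2)\,\mathds 1_{A_j}\bigr],
$$
where $A_j$ is an event depending only on the clocks at $e^+$ and above. This event encodes that the upper part sends the walk back down at least $j-1$ times and then reaches $\rho$. I will then try to factor it as $\P(A_j')\,\E[g_1(K_1)g_2(K_2)]$, where $K_1,K_2$ depend only on the $e^+$ clocks and $A_j'$ only on the clocks above $e^+$. Finally I will compare $\P(A_j')$ with $\P(N_e=j)$, which should be comparable because the lower branches only delay, not prevent, upward crossings, up to the hitting events themselves.

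\textbf{Moment bound.} It remains to bound $\E[g_1(K_1)g_2(K_2)]$ by $C\prod_i\bigl(1\wedge (j/(n_i+1))^{D+1}\bigr)$. Using $g_i(k)\le C\bigl(1\wedge(j/(n_i+1))^{D+1}\bigr)(1+k/j)^{D+1}$, it suffices to show $\E[(1+K_1/j)^{D+1}(1+K_2/j)^{D+1}]\le C$. This follows from the structure of $K_i$: each $K_i$ is at most $M$ (cookie contribution) plus a sum of $j$ i.i.d.\ geometric variables with mean at most $1$ (the uniform regime among three neighbours). Cauchy--Schwarz and the moment bound \eqref{eq:Gmom} then give the claim, with constants depending only on $M$ and $(\alpha_k)$.

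\textbf{Main obstacle.} The step I expect to be delicate is the decoupling of the conditioning on $\{N_e=j\}$ from the branch events. The count $N_e$ is itself defined through the stopped trajectory, and the hitting of $e_i^+$ terminates nothing but shares the clock sequence at $e^+$ with $N_e$. My first attempt is to condition on the full choice sequence at $e^+$ and on all clocks above $e^+$. Given these, $N_e$ is determined (lower excursions always return before $\widetilde\tau^+_\rho$ on finite paths, or hit $e_i^+$, which does not affect $N_e$), and $K_1,K_2$ are determined, leaving only the independent branch clocks random. If that works, the bound holds conditionally and averaging over $\{N_e=j\}$ replaces the unconditional moment estimate by the same estimate under $\P(\cdot\mid N_e=j)$. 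This requires the moment bound for $K_i$ given $N_e=j$, i.e.\ given that the $e^+$-sequence has its $j$-th upward step at the appropriate position. That is again a sum of at most $j$ geometric blocks plus $O(M)$, and I would verify it carefully.
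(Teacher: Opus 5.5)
Your proposal follows essentially the same route as the paper. You condition on $N_e=j$ and on the number of excursions from $e^+$ into each branch (your $K_i$, the paper's $L_i$), use that the two branch extensions are independent of each other and of these counts, apply Proposition~\ref{lem:genruin} to each branch, and absorb the factor $(1+K_i/j)^{D+1}$ by dominating $K_i$ by $M$ plus a sum of $j$ geometric variables.

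The decoupling you flag as delicate is immediate, and your remark that $N_e$ "shares the clock sequence at $e^+$" is not correct. $N_e$ is the number of visits to $e^+$ by the extension on $\mathcal P_{e^+}$, which is forced to step from $e^+$ to $e^-$, so $N_e$ is a function of the clocks on $\mathcal P_{e^-}$ alone. Hence $\{N_e=j\}$ is exactly your event $A_j'$, no comparison step is needed, and $(K_1,K_2)$ keeps its unconditional law given $N_e=j$.
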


\begin{proof}
For $i\in\{1,2\}$, let $[e^+,e_i^+]$ denote the path from $e^+$ to $e_i^+$, including both endpoints, and set $m_i:=|e_i|-|e|+1$.

Let $\X^{[e^+,e_i^+]}$ be the extension process on $[e^+,e_i^+]$, constructed similarly as for extensions on $\mathcal{P}_v$ in Definition~\ref{def:restricted} using the same random variables $\boldsymbol\xi$, such that it starts from $e^+$. The same restriction principle as in Proposition~\ref{prop.res} applies to $[e^+,e_i^+]$. Hence the restriction of $\widetilde\X$ to $[e^+,e_i^+]$ agrees with $\X^{[e^+,e_i^+]}$ for as long as the restriction is defined.

For $k\ge1$, let $r_{[e^+,e_i^+]}^{(k)}$ denote the probability that $\X^{[e^+,e_i^+]}$ reaches $e_i^+$ before its $k$-th return to $e^+$ after time $0$, and set $r_{[e^+,e_i^+]}^{(0)}:=0.$ Let $L_i$ be the number of steps of $\widetilde{\mathbf{X}}$ from the common vertex $e^+$ into the branch containing $e_i$ before its first return to $\rho$. 

By the restriction principle applied to $\mathcal P_{e^+}$, $N_e$ agrees with the number of steps from $e^+$ to $e^-$ of $\X^{(e^+)}$ before its first return to $\rho$. By the construction, the extensions $\X^{[e^+,e_1^+]}$ and $\X^{[e^+,e_2^+]}$ are independent of each other and of $(N_e,L_1,L_2)$.

If $L_i=0$, we have $e_i\notin\mathcal C_{\rm CP}(\rho)$. On $\{N_e=j,L_i\ge1\}$, the restriction of $(\widetilde X_k)_{0\le k<\widetilde\tau_\rho^+}$ to $[e^+,e_i^+]$ ends at its $L_i$-th return to $e^+$. By Proposition~\ref{prop.res}, the event $\{e_i\in\mathcal C_{\rm CP}(\rho)\}$ therefore occurs precisely when $\X^{[e^+,e_i^+]}$ reaches $e_i^+$ before that return.  We therefore have
\begin{equation}\label{eq:ruin-bound}
\P(e_1,e_2\in\mathcal C_{\rm CP}(\rho)\mid N_e=j)
=
\E\left[
r_{[e^+,e_1^+]}^{(L_1)} r_{[e^+,e_2^+]}^{(L_2)}
\mid N_e=j
\right].
\end{equation}
The same result of Proposition~\ref{lem:genruin} applies to $\X^{[e^+,e_i^+]}$. We therefore have
\begin{equation}\label{eq:genruin}
r_{[e^+,e_i^+]}^{(L_i)} \le C\left(1\wedge\left(\frac{L_i}{m_i}\right)^{D+1}\right) \quad\text{for } i\in\{1,2\},
\end{equation}
We also have
\begin{equation}\label{ineq1}
\prod_{i=1}^2 \left(1\wedge\left(\frac{L_i}{m_i}\right)^{D+1}\right)
\le \prod_{i=1}^2 \left(1\wedge\left(\frac{j}{m_i}\right)^{D+1}\right) \left(1+\frac{L_1+L_2}{j}\right)^{2D+2}.
\end{equation}
Conditional on $\{N_e=j\}$, $L_1+L_2$ is stochastically bounded by $M$ plus the sum of $j$ independent geometric random variables, each with probability $\frac13(\frac23)^\ell$ of taking the value $\ell\ge0$. Using $M/j\le M$ and Jensen's inequality, we thus have
\begin{equation}\label{ineq2}
\begin{aligned}
\E\left[
\left(1+\frac{L_1+L_2}{j}\right)^{2D+2}
\,\middle|\,N_e=j
\right] &\le
\sum_{\ell_1,\ldots,\ell_j\ge0}
\left(\prod_{k=1}^j\frac13\left(\frac23\right)^{\ell_k}\right)
\left(1+M+\frac{\ell_1+\cdots+\ell_j}{j}\right)^{2D+2}\\
&\quad\le
\sum_{\ell=0}^{\infty}
\frac13\left(\frac23\right)^\ell
(1+M+\ell)^{2D+2}
\le C,
\end{aligned}
\end{equation}
where $C$ is a constant depends on $M$ and $D$. 
Combining \eqref{eq:ruin-bound} with \eqref{eq:genruin}, \eqref{ineq1} and \eqref{ineq2}, we obtain the result of the lemma.

\end{proof}

\begin{proof}[Proof of Proposition \ref{prop:per}]

By Proposition~\ref{prop.res}, every ancestor of an edge in $\mathcal C_{\rm CP}(\rho)$ also belongs to $\mathcal C_{\rm CP}(\rho)$. If one of $e_1,e_2$ is an ancestor of the other, the defining inequality for quasi-independence is an equality. If $\mathcal P_{e_1^+}\cap\mathcal P_{e_2^+}=\{\rho\}$, then, by the construction in Section~\ref{sec:ext}, for $i\in\{1,2\}$,
$$
\{e_i\in\mathcal C_{\rm CP}(\rho)\}
\in
\sigma\bigl(\xi(u,w,k):u\in\mathcal P_{e_i^+}\setminus\{\rho\},\ w\sim u,\ k\ge1\bigr).
$$
By independence of $\boldsymbol\xi$, the two events $\{e_1\in\mathcal C_{\rm CP}(\rho)\}$ and $\{e_2\in\mathcal C_{\rm CP}(\rho)\}$ are independent.

It remains to consider the case in which $e=e_1\wedge e_2\in E$ and $e_1,e_2$ lie strictly below $e$ in two distinct descendant subtrees. Set
$$
n:=|e|,\quad m_i:=|e_i|-|e|+1
\quad\text{for }i\in\{1,2\}.
$$
By Proposition~\ref{thm:ruin_prob}, there exists $C\in[1,\infty)$, depending only on $M$ and $(\alpha_k)_{1\le k\le M}$, such that
\begin{equation}\label{eq:per.ruin}
C^{-1}|f|^{-D-1}\le \P(f\in\mathcal C_{\rm CP}(\rho))\le C|f|^{-D-1}\quad\text{for every }f\in E.
\end{equation}
By Proposition~\ref{prop.res}, $N_e$ agrees with the number of backward crossings of $e$ made by $\X^{(e^+)}$, the extension on $\mathcal P_{e^+}$\textcolor{purple}{,} before its first return to $\rho$. Since the forward and backward crossing counts of $e$ are equal, by the definition of the Markov chain $Z^{(n)}$ we have $N_e=Z_{n-1}^{(n)}$ for this extension. By Proposition~\ref{prop.moment-drift},
$$
\E[N_e^{2(D+1)}]\le Cn^{D+1}.
$$
Since $\{N_e\ge1\}=\{e\in\mathcal C_{\rm CP}(\rho)\}$ almost surely, using \eqref{eq:momentZ}, \eqref{eq:per.ruin} and Proposition~\ref{prop.moment-drift}, we obtain
\begin{equation}\label{eq:per.moment}
\begin{aligned}
\E\left[\left(1+\frac{N_e}{n}\right)^{2D+2}\mid N_e\ge1\right]
&\le 2^{2D+1}\left( 1+\frac{\E[N_e^{2D+2}]}{n^{2D+2}\P(N_e\ge1)}
\right)\le C.
\end{aligned}
\end{equation}
For every $j\ge1$ and $i\in\{1,2\}$, we have
$$
1\wedge\frac{j}{m_i}\le\frac{n+j}{n+m_i-1}=\frac{n}{|e_i|}\left(1+\frac{j}{n}\right).
$$
Using this inequality, \eqref{eq:branch-mixture}, Lemma~\ref{lem:branch_corr} and \eqref{eq:per.moment}, we obtain
\begin{equation}\label{eq:per.joint}
\begin{aligned}
\P\bigl(e_1,e_2\in\mathcal C_{\rm CP}(\rho)\mid e\in\mathcal C_{\rm CP}(\rho)\bigr)&\le C\E\left[\prod_{i=1}^2\left(1\wedge\left(\frac{N_e}{m_i}\right)^{D+1}\right)\mid N_e\ge1\right]\\
&\le C\prod_{i=1}^2\left(\frac{n}{|e_i|}\right)^{D+1}\E\left[ \left(1+\frac{N_e}{n}\right)^{2D+2} \mid N_e\ge1\right]\\
&\le C\prod_{i=1}^2\left(\frac{n}{|e_i|}\right)^{D+1}.
\end{aligned}
\end{equation}
On the other hand, by Proposition~\ref{prop.res}, we have
$\{e_i\in\mathcal C_{\rm CP}(\rho)\} \subseteq \{e\in\mathcal C_{\rm CP}(\rho)\}.
$
Using this inclusion and \eqref{eq:per.ruin}, we obtain
$$
\P\bigl(e_i\in\mathcal C_{\rm CP}(\rho)\mid e\in\mathcal C_{\rm CP}(\rho)\bigr)=\frac{\P(e_i\in\mathcal C_{\rm CP}(\rho))}{\P(e\in\mathcal C_{\rm CP}(\rho))}
\ge C^{-2}\left(\frac{n}{|e_i|}\right)^{D+1}.
$$
Combining this inequality with \eqref{eq:per.joint}, we obtain
$$
\begin{aligned}
\P\bigl(e_1,e_2\in\mathcal C_{\rm CP}(\rho)
\mid e\in\mathcal C_{\rm CP}(\rho)\bigr)\le C\prod_{i=1}^2 \P\bigl(e_i\in\mathcal C_{\rm CP}(\rho) \mid e\in\mathcal C_{\rm CP}(\rho)\bigr), \end{aligned}
$$
where $C$ is independent of $e_1,e_2$. Thus the ruin percolation is quasi-independent.    
\end{proof}
\subsection{Proof of Theorem \ref{main.thm}}\label{sec:proof-thm}

By the same argument as Proposition 11.1 in \cite{CHK2019}, we obtain the following result. We include a proof here for the sake of completeness.

\begin{lemma}[A $0-1$ law for recurrence and transience]\label{lem:zero.one}
Let $\X$ be a MERW. The event that every vertex is visited infinitely often happens with probability $0$ or $1$. 
\end{lemma}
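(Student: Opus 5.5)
We prove the $0$--$1$ law through a dichotomy: on every trajectory the walk either visits each vertex infinitely often or visits each vertex only finitely often, and the probability of the recurrent alternative is forced to be $0$ or $1$. The proof follows the strategy of Proposition~11.1 in \cite{CHK2019}. Write $\mathcal R$ for the event that every vertex is visited infinitely often.

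\textbf{Step 1.} We first show that if some vertex $v$ is visited infinitely often, then so is each neighbour $u\sim v$. After the $M$ cookies at $v$ have been consumed, every departure from $v$ goes to $u$ with probability $1/\deg(v)$. These departures are driven by fresh exponential clocks in Rubin's construction, so they are independent of the past. Hence by the conditional Borel--Cantelli lemma (L\'evy's extension), infinitely many visits to $v$ produce infinitely many steps to $u$ almost surely. Since $\mathcal T$ is connected, it follows that almost surely either $\mathcal R$ occurs or every vertex is visited finitely often. The latter case means transience, since the walk is nearest-neighbour on a locally finite tree.

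\textbf{Step 2.} We next show $\P(\mathcal R)\in\{0,1\}$. Suppose $\P(\mathcal R)>0$. Let $\tau_\rho^{+}$ denote the first return time to $\rho$, so Lemma~\ref{per2.lem} gives $\P(\tau^+_\rho=\infty)=\P(|\mathcal C_{\rm CP}(\rho)|=\infty)$. The target is to prove $\P(\tau_\rho^+=\infty)=0$. Indeed, $\mathcal R^c$ equals transience by Step~1, and transience with positive probability should force a positive probability that the walk escapes from $\rho$ without returning.

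To compare the two events, I would use the extensions. Fix a subtree $\mathcal T_u$ hanging below a child $u$ of $\rho$. The walk inside $\mathcal T_u$, viewed through its successive excursions from $\rho$, is driven by the clocks $\xi(w,\cdot,\cdot)$ with $w\in\mathcal T_u$. Transience means that some excursion from $\rho$ never returns. The $k$-th excursion into $\mathcal T_u$ starts with at most $M$ cookies consumed at each vertex on the path, in the sense of the Markov chains $R^{(v)}$.

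The key monotonicity is that consumed cookies only remove backward bias, because $\alpha_j\ge0$. Hence an excursion started in a partially depleted environment escapes with probability at most that of the first excursion in the fresh environment. I would make this precise with the $\Phi_k$-coupling from Section~\ref{sec:markov}: the number of backward steps needed is non-increasing when earlier cookies are removed, as in Lemma~\ref{lem:monotone}. This gives $\P(\text{the $k$-th excursion escapes})\le\P(\tau^+_\rho=\infty)$ for every $k$.

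If $\P(\tau_\rho^+=\infty)=0$, a union bound over $k$ shows that every excursion returns, so $\rho$ is visited infinitely often and $\P(\mathcal R)=1$ by Step~1. If instead $\P(\tau_\rho^+=\infty)=p>0$, I need the reverse implication, namely $\P(\mathcal R)=0$.

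\textbf{Step 3 (main obstacle).} The hardest part is this reverse implication: a uniform lower bound $\P(\text{the $k$-th excursion escapes}\mid\mathcal F_{\text{start of }k\text{-th excursion}})\ge c>0$. With such a bound, the Borel--Cantelli--L\'evy argument makes escape eventually certain, so $\P(\mathcal R)=0$.

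The difficulty is that the depleted environment favours escape, but only on the vertices already visited. The plan is to exploit the fact that only finitely many vertices have been touched at the start of any excursion. Since $|\mathcal C_{\rm CP}(\rho)|=\infty$ has positive probability, I would pick a deep vertex $w$ that is still unvisited and whose subtree percolates to infinity with positive probability, and consider a walk that travels to $w$ and escapes from there. By locality and the independence of the clocks in disjoint subtrees, this event has conditional probability bounded below by a positive constant. That constant might depend on the visited region, so instead of a uniform $c$ I would use L\'evy's $0$--$1$ law: $\P(\mathcal R\mid\mathcal F_n)\to\mathbf 1_{\mathcal R}$, while on $\mathcal R$ the conditional escape probability stays above a fixed $c$ infinitely often. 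These two facts are incompatible unless $\P(\mathcal R)\in\{0,1\}$.
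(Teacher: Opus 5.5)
Your Step~1 is correct and matches the first half of the paper's proof. The gap is in Steps~2--3, which are meant to give $\P(\mathcal R)\in\{0,1\}$ and do not.

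\textbf{Step 2.} The comparison runs the wrong way. Every cookie has $\alpha_j\ge0$, i.e.\ a nonnegative bias toward the parent, so consuming cookies should make escape \emph{more} likely, as you yourself note in Step~3. The bound $\P(\text{the $k$-th excursion escapes})\le\P(\tau_\rho^+=\infty)$ is therefore unfounded, and the implication $\P(\tau^+_\rho=\infty)=0\Rightarrow\P(\mathcal R)=1$ is unproved.

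Moreover, no monotone coupling in the environment is available to make either direction rigorous. After $j$ visits the remaining biases at a vertex are $(\alpha_{j+1},\dots,\alpha_M,0,\dots)$. These are not dominated term by term by $(\alpha_1,\dots,\alpha_M,0,\dots)$; take $\alpha_1=0<\alpha_2$. Also, Lemma~\ref{lem:monotone} compares paths of different lengths in the \emph{same} environment, not different environments.

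\textbf{Step 3.} The whole difficulty is to find a fixed constant $c>0$ with $\P(\mathcal R^c\mid\mathcal F_n)\ge c$ infinitely often on $\mathcal R$. You concede that your lower bound depends on the visited region. Nothing prevents the probability of reaching a fresh vertex $w$ and escaping through $\mathcal T_w$ from tending to $0$ as that region grows. Without a uniform $c$, L\'evy's $0$--$1$ law yields no contradiction.

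\textbf{The missing idea.} Express transience as a \emph{tail event} of the independent clocks $\boldsymbol\xi$; then no comparison of environments is needed. This is how the paper argues.
\begin{enumerate}
\item For $v\ne\rho$, let $\X^{\mathcal T_v}$ be the extension of $\X$ to the subtree $\mathcal T_v$ formed by $v^{-1}$, $v$ and the descendants of $v$. It is a function only of the clocks $\xi(u,\cdot,\cdot)$ with $u\in\mathcal T_v\setminus\{v^{-1}\}$.
\item By the restriction principle, when $\mathcal T_v$ is visited infinitely often, the number of visits of $\X^{\mathcal T_v}$ to $v^{-1}$ equals $1$ plus the number of steps $v\to v^{-1}$ of $\X$.
\item Combined with your Step~1, this shows that a.s.\ transience coincides with the event $B$ that infinitely many $v$ have $\X^{\mathcal T_v}$ visiting $v^{-1}$ only finitely often. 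Under recurrence no $v$ qualifies. Under transience, for every $n$ the walk eventually stays in some $\mathcal T_v\setminus\{v^{-1}\}$ with $|v|=n$, and that $v$ qualifies.
\item Discarding the vertices of depth at most $n$ does not change $B$. Hence $B\in\sigma(\xi(u,w,j):|u|>n,\ w\sim u,\ j\ge1)$ for every $n$, and Kolmogorov's $0$--$1$ law gives $\P(B)\in\{0,1\}$.
\end{enumerate}
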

\begin{proof}
Regardless of the current states of the cookie stacks, the probability of a step between two fixed adjacent vertices is bounded below by a positive constant. Indeed, since $\lambda_u^{(j)}=1$ for $j\ge M+1$, for $u\sim w$ we have
$$
\inf_{j\ge1}p_{\boldsymbol\lambda}(u,w,j)
=\min_{1\le j\le M+1}p_{\boldsymbol\lambda}(u,w,j)>0.
$$
Recall that $\tau_u^{(j)}$ denotes the $j$-th visit time of the process $\X$ to $u$. Section~\ref{sec:rubin}. For integers $q\ge0$ and $r\ge1$, by \eqref{P.quenched} we have
$$
\begin{aligned}
&\P\left(\tau_u^{(q+r)}<\infty,\ X_{\tau_u^{(j)}+1}\ne w\text{ for every }q<j\le q+r\right)\le\left(1-\min_{1\le j\le M+1}p_{\boldsymbol\lambda}(u,w,j)\right)^r.
\end{aligned}
$$
Taking the limit as $r\to\infty$, we obtain
$$\begin{aligned}
&\P\left(\lim_{n\rightarrow\infty}\mathsf L_n(u)=\infty,\ \sum_{k=0}^{\infty}\mathds1_{\{X_k=u,\,X_{k+1}=w\}}<\infty\right)\\
& =\P\Big(\bigcup_{q=0}^{\infty} \left\{ \lim_{n\rightarrow\infty}\mathsf L_n(u)=\infty,\ X_{\tau_u^{(j)}+1}\ne w \text{ for every }j>q \right\}\Big)=0
\end{aligned}$$
Thus, almost surely, if $u$ is visited by $\X$ infinitely often, then the step from $u$ to $w$ occurs infinitely often. 
Hence, almost surely, $\X$ visits one vertex infinitely often if and only if $\X$ visits every vertex infinitely often. Consequently, 
\begin{align}\label{eq:recurrence}\P\Big(\{\text{$\X$ visits every vertex infinitely often}\}\cup \{\text{$\X$ visits every vertex finitely often}\}\Big)=1.
\end{align}

For $v\ne\rho$, let $\mathcal T_v$ be the subtree consisting of $v^{-1}$, $v$, and all descendants of $v$. Let $\X^{\mathcal T_v}$ be the extension process to this subtree, constructed similarly as for a path in Section~\ref{sec:ext} with the same family of random variables $\boldsymbol{\xi}$. The same restriction principle as in Proposition~\ref{prop.res} applies to the process $\X^{\mathcal T_v}$.  In particular, when $\mathcal T_v$ is visited by $\X$ infinitely often, we have
\begin{equation}\label{eq:subtree.visits}
\sum_{k\ge0}\mathds1_{\{X_k^{\mathcal T_v}=v^{-1}\}} = 1+\sum_{k\ge0}\mathds1_{\{X_k=v,\,X_{k+1}=v^{-1}\}}.
\end{equation}
Define the event $$B:= \left\{ \operatorname{card}\Big\{ v\in V\setminus\{\rho\}: \sum_{k\ge0}\mathds1_{\{X_k^{\mathcal T_v}=v^{-1}\}}<\infty \Big\} =\infty
\right\}.$$
If every vertex is visited by $\X$ infinitely often, the right-hand side of \eqref{eq:subtree.visits} is infinite for every $v\ne\rho$, and therefore $B$ cannot occur in this case. Consider now the case when every vertex is visited by $\X$ finitely often. Fix $n\ge 0$. Since the vertex set $\{u\in V:|u|<n\}$ is visited only finitely often by $\X$,  there exists a vertex $v$ with $|v|=n$ such that
$X_k\in\mathcal T_v\setminus\{v^{-1}\}$ for all sufficiently large $k$. By \eqref{eq:subtree.visits}, we thus have
$$
\operatorname{card}\big\{k\ge0:X_k^{\mathcal T_v}=v^{-1}\big\}\le 1+\sum_{k\ge0}\mathds1_{\{X_k=v^{-1}\}}<\infty.
$$ 
Hence $B$ occurs in this case. 
Consequently, almost surely,
\begin{equation}\label{eq:B.transience}
B=\{\X\text{ visits every vertex finitely often}\}.
\end{equation}

We finally prove that $B$ belongs to the tail sigma-algebra of $\boldsymbol\xi$. For every $n\ge0$, removing vertices with $|v|\le n$ from its definition leaves $B$ unchanged. Hence
$$
\begin{aligned}
B &=\left\{\operatorname{card}\Big\{v\in V:|v|>n,\ \sum_{k\ge0}\mathds1_{\{X_k^{\mathcal T_v}=v^{-1}\}}<\infty\Big\}=\infty\right\}\\
&\in \sigma\bigl(\xi(u,w,j):|u|>n,\ w\sim u,\ j\ge1\bigr).
\end{aligned}
$$
Therefore $B$ belongs to the tail sigma-algebra of the independent random variables $\boldsymbol\xi$. By Kolmogorov's $0-1$ law, we have $\P(B)\in\{0,1\}$. Using \eqref{eq:recurrence} and \eqref{eq:B.transience}, we conclude that
$$
\P(\X\text{ visits every vertex infinitely often})
=1-\P(B)\in\{0,1\}.
$$

\end{proof}

We are now ready for the proof of the main result.

\begin{proof}[Proof of Theorem \ref{main.thm}]
Let $\mathbf Q$ denote the law of the ruin percolation. By its definition and Proposition~\ref{prop.res}, we have
$$
\mathbf Q(\rho\leftrightarrow e^+)
=\P(e\in\mathcal C_{\rm CP}(\rho))
\quad\text{for every }e\in E.
$$
By Proposition \ref{thm:ruin_prob}, there exists $C\in[1,\infty)$ such that
\begin{align}
C^{-1}|e|^{-D-1}\label{ruininq}
\le \mathbf Q(\rho\leftrightarrow e^+)
\le C|e|^{-D-1}
\quad\text{for every }e\in E.
\end{align}
By Proposition~\ref{prop:per}, the ruin percolation is quasi-independent, so the criteria in Proposition~\ref{lyons1989} apply.

\textbf{Case 1: when ${\rm br}_r(\mathcal T)<D+1$}. We choose $\gamma$ such that
${\rm br}_r(\mathcal T)<\gamma<D+1.$
By \eqref{brr2}, we have
$$
\inf_{\pi\in\Pi}\sum_{e\in\pi}|e|^{-\gamma}=0.
$$
Using \eqref{eq:per.ruin} and $\gamma<D+1$, we obtain
$$
\inf_{\pi\in\Pi}\sum_{e\in\pi}\mathbf Q(\rho\leftrightarrow e^+)\le C\inf_{\pi\in\Pi}\sum_{e\in\pi}|e|^{-D-1}\le C\inf_{\pi\in\Pi}\sum_{e\in\pi}|e|^{-\gamma}=0.
$$
By Proposition~\ref{lyons1989}(i), we conclude that $\mathbf Q(\rho\leftrightarrow\infty)=0$. By Lemma~\ref{per2.lem} and Lemma \ref{lem:zero.one}, the process $\X$ is almost surely recurrent.

\textbf{Case 2: when ${\rm br}_r(\mathcal T)>D+1$}. We choose $\gamma$ such that
$D+1<\gamma<{\rm br}_r(\mathcal T).$
By \eqref{brr2}, we have
$$
\inf_{\pi\in\Pi}\sum_{e\in\pi}|e|^{-\gamma}>0.
$$
By the max-flow min-cut theorem, there exists a non-zero flow $\theta$ from $\rho$ to infinity satisfying
$$
0\le\theta(e)\le|e|^{-\gamma}
\quad\text{for every }e\in E.
$$

Recall the adapted conductances defined in \eqref{adt.c}. For $|e|>1$, we have
\begin{equation}\label{eq:main.conductance}
\begin{aligned}
\frac1{c(e)}
&=
\frac{\mathbf Q(e\text{ is closed}\mid\rho\leftrightarrow e^-)}
{\mathbf Q(\rho\leftrightarrow e^+)}=
\frac1{\mathbf Q(\rho\leftrightarrow e^+)}
-\frac1{\mathbf Q(\rho\leftrightarrow e^-)}.
\end{aligned}
\end{equation}
Fix $v\ne\rho$, and let $e_1,\ldots,e_N$ be the successive edges of $\mathcal P_v$, where $N=|v|$ and $|e_n|=n$. By \eqref{eq:main.conductance} and using summation by parts, we obtain
\begin{equation}\label{eq:main.potential}
\begin{aligned}
\sum_{e\in\mathcal P_v}\frac{\theta(e)}{c(e)}
&\le\sum_{n=1}^N\frac{n^{-\gamma}}{c(e_n)}\le \frac{N^{-\gamma}}{\mathbf Q(\rho\leftrightarrow v)} +\sum_{n=1}^{N-1}\frac{n^{-\gamma}-(n+1)^{-\gamma}}{\mathbf Q(\rho\leftrightarrow e_n^+)}\\
&\le CN^{D+1-\gamma}+C\sum_{n=1}^{N-1}n^{D-\gamma}\le C.
\end{aligned}
\end{equation}
For the last two inequalities in \eqref{eq:main.potential}, we used \eqref{eq:per.ruin},
$n^{-\gamma}-(n+1)^{-\gamma}\le\gamma n^{-\gamma-1},$ and $\gamma>D+1$. In particular, the constant in \eqref{eq:main.potential} is independent of $v$.

Let $\mathfrak m_\theta$ be the measure induced by the flow $\theta$ on $\partial\mathcal T$, characterised by
$$
\mathfrak m_\theta\bigl(\{\xi\in\partial\mathcal T:e\in\xi\}\bigr)=\theta(e)
\quad\text{for every }e\in E.
$$
For $\xi\in\partial\mathcal T$, set
$$
V_\theta(\xi):=\sum_{e\in\xi}\frac{\theta(e)}{c(e)}.
$$
By \eqref{eq:main.potential}, we have $V_\theta(\xi)\le C$ for every $\xi\in\partial\mathcal T$. Hence,
$$
\begin{aligned}
\sum_{e\in E}\frac{\theta(e)^2}{c(e)} &=\int_{\partial\mathcal T}V_\theta(\xi)\,\mathrm d\mathfrak m_\theta(\xi)\le C\mathfrak m_\theta(\partial\mathcal T) =C\sum_{|e|=1}\theta(e)<\infty.
\end{aligned}
$$
By Proposition~\ref{lyons1989}(ii), we thus have $\mathbf Q(\rho\leftrightarrow\infty)>0$. Combining this with Lemma~\ref{per2.lem} and Lemma \ref{lem:zero.one}, we conclude that the process $\X$ is almost surely transient.
\end{proof}

\section{Appendix}
To convert the asymptotics of some of generating functions we used to asymptotics of their coefficients, we will use Karamata's Tauberin theorem, see corollary 1.7.3 of \cite{BGT1987}. 
\begin{theorem}[Karamata Tauberian Theorem \cite{BGT1987}]\label{thm:karamata}
Let $(a_n)_{n\ge0}$ be a sequence of non-negative real numbers which is ultimately non-increasing. If, for some $c>0$ and $\rho>0$,
$$
\sum_{m=n}^{\infty}a_m\sim c n^{-\rho} \quad\text{as }n\rightarrow\infty,
$$
then
$$
a_n\sim c\rho n^{-\rho-1}
\quad\text{as }n\rightarrow\infty.
$$
\end{theorem}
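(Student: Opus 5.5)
We would prove this by the standard monotone density argument. The idea is to compare the tail sums at two nearby scales $n$ and $\lambda n$ with $\lambda$ close to $1$, and to let monotonicity of $(a_m)$ squeeze the block sum between multiples of a single term. No generating functions or the full Karamata machinery are needed, because the hypothesis is already stated for the tail sums $T(n):=\sum_{m\ge n}a_m$.

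\emph{Lower bound.} Fix $n_0$ such that $(a_m)_{m\ge n_0}$ is non-increasing, and fix $\lambda>1$. For $n\ge n_0$, set $N:=\lfloor\lambda n\rfloor$. Monotonicity gives
$$T(n)-T(N)=\sum_{m=n}^{N-1}a_m\le (N-n)\,a_n.$$
By the hypothesis, $T(n)-T(N)=c n^{-\rho}\bigl(1-\lambda^{-\rho}+o(1)\bigr)$, using $N/n\to\lambda$. Also $N-n=(\lambda-1)n+O(1)$. Dividing, we get
$$\liminf_{n\to\infty}n^{\rho+1}a_n\ge \frac{c(1-\lambda^{-\rho})}{\lambda-1}.$$

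\emph{Upper bound.} Now set $N':=\lceil n/\lambda\rceil$, which is at least $n_0$ for large $n$. Then
$$T(N')-T(n+1)=\sum_{m=N'}^{n}a_m\ge (n-N'+1)\,a_n,$$
and the left side equals $c n^{-\rho}\bigl(\lambda^{\rho}-1+o(1)\bigr)$. This gives
$$\limsup_{n\to\infty}n^{\rho+1}a_n\le \frac{c(\lambda^{\rho}-1)}{1-\lambda^{-1}}.$$

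\emph{Conclusion.} Let $\lambda\downarrow1$. Both bounds converge to $c\rho$, since $(1-\lambda^{-\rho})/(\lambda-1)\to\rho$ and $(\lambda^\rho-1)/(1-\lambda^{-1})\to\rho$. Hence $a_n\sim c\rho\,n^{-\rho-1}$.

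The only delicate point is bookkeeping with the integer parts and the ``ultimately'' qualifier. We would make sure the $o(1)$ errors from $T(n)\sim cn^{-\rho}$ are taken at a fixed $\lambda$ before the limit $\lambda\downarrow1$, so they vanish as $n\to\infty$. We would also use $(\lfloor \lambda n\rfloor/n)^{-\rho}\to\lambda^{-\rho}$. Neither step is a real obstacle.
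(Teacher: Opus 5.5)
Your proof is correct and complete. The sandwich
$T(n)-T(\lfloor\lambda n\rfloor)\le(\lfloor\lambda n\rfloor-n)\,a_n$ and $T(\lceil n/\lambda\rceil)-T(n+1)\ge(n-\lceil n/\lambda\rceil+1)\,a_n$, with $n\to\infty$ taken at fixed $\lambda$ before $\lambda\downarrow1$, gives $c\rho$ from both sides.

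The paper gives no proof of this statement; it quotes it from Bingham--Goldie--Teugels as a form of Karamata's Tauberian theorem. What you have written is the monotone density argument, specialised to sequences and pure power tails, and that is exactly the step the paper needs. In the proof of Proposition~\ref{thm:ruin_prob}, the tail asymptotics \eqref{eq:ruin.tail} are already obtained by direct coefficient comparison, not by a Tauberian argument on generating functions. What remains is to pass from $\sum_{m\ge n}P_m(0,0)$ to $P_n(0,0)$ itself, using the monotonicity from Lemma~\ref{lem:monotone}.

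Your route gives a short, self-contained proof in exactly the tail-sum, negative-exponent form that is used. The citation gives brevity and generality (slowly varying factors, the Abelian--Tauberian link with power series), neither of which is needed at this point.
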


\begin{lemma}\label{lem:lypint}
Assume that $D>0$. For every $a\ge1$ and $q\in(0,D)$, there exists $R<\infty$ such that, for every integer $y>R$,
$$\int_0^1(-\log u)^{q-1}u^{a-1}(2-u)^{-(y+1)}\prod_{j=1}^M(1-\beta_j(1-u))\rmd u\le\Gamma(q)(y+a)^{-q}.$$
\end{lemma}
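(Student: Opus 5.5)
The plan is to turn the integral into a Laplace transform, pick out the first-order correction in $1/y$, and show that this correction is strictly negative because $q<D$. First substitute $u=\mathrm e^{-t}$, which rewrites the left-hand side as
$$
I(y):=\int_0^\infty t^{q-1}\mathrm e^{-at}\,(2-\mathrm e^{-t})^{-(y+1)}\prod_{j=1}^M\bigl(1-\beta_j(1-\mathrm e^{-t})\bigr)\rmd t .
$$
The right-hand side has the matching representation $\Gamma(q)(y+a)^{-q}=\int_0^\infty t^{q-1}\mathrm e^{-(y+a)t}\rmd t$. So the task is to compare $I(y)$ with this Laplace integral to second order, meaning up to errors of relative size $O(y^{-2})$.

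Next expand the integrand for small $t$. We have $\log(2-\mathrm e^{-t})=\log\bigl(1+t-\tfrac{t^2}{2}+O(t^3)\bigr)=t-t^2+O(t^3)$. Also $\sum_j\log(1-\beta_j(1-\mathrm e^{-t}))=-Dt+O(t^2)$. Hence on $[0,t_0]$ the integrand equals
$$
t^{q-1}\exp\bigl(-(y+a)t-(1+D)t+(y+1)t^2+O(t^2)+O(yt^3)\bigr).
$$
On the scale $t\asymp 1/y$ both $(1+D)t$ and $(y+1)t^2$ are of order $1/y$, while the remaining errors are $O(y^{-2})$.

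To make this rigorous I will not expand exactly. Instead I will use explicit one-sided inequalities valid for $t\in[0,t_0]$:
$$
(2-\mathrm e^{-t})^{-(y+1)}\le \exp\bigl(-(y+1)(t-t^2-Kt^3)\bigr),\qquad \prod_{j=1}^M\bigl(1-\beta_j(1-\mathrm e^{-t})\bigr)\le \exp(-Dt+Kt^2),
$$
for some constant $K$. Then I will use $\mathrm e^{x}\le 1+x+x^2\mathrm e^{|x|}$ on the small quantities $(y+1)(t^2+Kt^3)-(1+D)t+Kt^2$. To keep these controlled I restrict to $t\le y^{-2/3}$, where $yt^2\to0$; for $y^{-2/3}<t\le t_0$ the factor $\mathrm e^{-(y+a)t/2}$ already gives a superpolynomially small contribution. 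Integrating term by term against $t^{q-1}\mathrm e^{-(y+a)t}$ and using $\int t^{q-1+k}\mathrm e^{-(y+a)t}\rmd t=\Gamma(q+k)(y+a)^{-q-k}$ gives
$$
I(y)\le \Gamma(q)(y+a)^{-q}\Bigl[1-\frac{(1+D)q}{y+a}+\frac{(y+1)q(q+1)}{(y+a)^2}+O(y^{-2})\Bigr]+(\text{tail}).
$$
The bracket is $1+\dfrac{q(q-D)}{y}+O(y^{-2})$.

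The tail $t>t_0$ is controlled by $(2-\mathrm e^{-t})^{-(y+1)}\le(2-\mathrm e^{-t_0})^{-(y+1)}$, which decays exponentially in $y$, since the other factors are integrable and bounded (we have $a\ge1$ and the product is at most $1$). Since $q<D$, the coefficient $q(q-D)$ is strictly negative, so the bracket is at most $1-c/y<1$ for $y$ large. The exponentially small tail is then absorbed, and we can choose $R$ accordingly.

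The main obstacle is the bookkeeping: the first-order term must be captured exactly, while every remainder, including the cross term $(1+D)t\cdot(y+1)t^2$, is shown to be $O(y^{-2})$ uniformly. I expect the cutoff at $t\le y^{-2/3}$ together with the quadratic bound on $\mathrm e^x$ to handle this cleanly.
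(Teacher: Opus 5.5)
Your argument is correct, and it takes a genuinely different route from the paper.

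\textbf{The paper's route.} The paper never expands in the Laplace variable. It recognises $(2-u)^{-(y+1)}\prod_{j}(1-\beta_j(1-u))$ as the generating function $\E[u^{K_y+J}]$. Here $K_y$ is negative binomial (a sum of $y+1$ mean-one geometric variables) and $J$ is a sum of independent $\operatorname{Bernoulli}(\beta_j)$ variables. Hence the left-hand side equals $\Gamma(q)\,\E[(K_y+J+a)^{-q}]$. The paper then Taylor-expands $x\mapsto x^{-q}$ about $y+a$ to third order. It uses $\E[K_y+J-y]=D+1$, $\E[(K_y+J-y)^2]=2y+O(1)$, a third-absolute-moment bound $O(y^{3/2})$, and a tail estimate for $\{K_y<y/2\}$. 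This gives $(y+a)^{-q}\bigl(1+q(q-D)/y+O(y^{-3/2})\bigr)$.

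\textbf{Your route.} You work in the dual variable. After $u=e^{-t}$ you compare directly with $\int_0^\infty t^{q-1}e^{-(y+a)t}\,\mathrm{d}t$. The same two first-order contributions appear: $-(1+D)t$ plays the role of the drift $D+1$, and $(y+1)t^2$ plays the role of the variance $2y$. So you recover the identical coefficient $q(q-D)$.

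\textbf{What each buys.} The paper's version makes the Lamperti-type drift/variance structure of $\widetilde Y$ explicit. It reduces everything to standard moment computations for sums of i.i.d. geometric variables. Yours is purely analytic and avoids both the probabilistic representation and the tail bound for $K_y$. It even yields a sharper $O(y^{-2})$ remainder. The cost is the one-sided elementary inequalities and the three-region decomposition $[0,y^{-2/3}]$, $(y^{-2/3},t_0]$, $(t_0,\infty)$.

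\textbf{Bookkeeping to make explicit.}
First, $t_0$ must be fixed small enough, independently of $y$, that $(y+1)(t^2+Kt^3)+Kt^2\le (y+a)t/2$ on $[0,t_0]$ for all large $y$. This is what produces your factor $e^{-(y+a)t/2}$ in the middle region.
Second, when you integrate the bound from $[0,y^{-2/3}]$ term by term over $[0,\infty)$, the negative term $-(1+D)t$ loses its piece over $(y^{-2/3},\infty)$. That piece must be added back, but it is superpolynomially small.
Incidentally, $f(t)=\log(2-e^{-t})-t+t^2$ satisfies $f(0)=0$ and $f'(t)=2t-\frac{2(1-e^{-t})}{2-e^{-t}}\ge 0$ for all $t\ge0$. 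So the cubic correction in your first inequality is unnecessary; only the product bound needs a $Kt^2$ term, with $K=D/2$.
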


\begin{proof}
Fix $a\ge1$ and $q\in(0,D)$. Let $K_y$ be a negative binomial random variable with
$$
\P(K_y=k)=\binom{y+k}{k}2^{-(y+1+k)}
\quad \text{for }k\ge0.
$$
Let $J$ be the sum of independent $\operatorname{Bernoulli}(\beta_j)$ random variables, $1\le j\le M$, independently of $K_y$. By the binomial expansion, we note that
$$
\E[u^{K_y+J}] =(2-u)^{-(y+1)} \prod_{j=1}^M(1-\beta_j(1-u)).
$$
Hence
\begin{align*}
&\int_0^1(-\log u)^{q-1}u^{a-1}(2-u)^{-(y+1)}
\prod_{j=1}^M(1-\beta_j(1-u))\rmd u\\
&\quad= \E\left[\int_0^1(-\log u)^{q-1}u^{K_y+J+a-1}\rmd u\right]=\Gamma(q)\E[(K_y+J+a)^{-q}].
\end{align*}
It therefore suffices to prove that
$$
\E[(K_y+J+a)^{-q}]\le(y+a)^{-q}
$$
for all sufficiently large $y$.

Using the Taylor expansion and taking expectations, we have
\begin{align*}
\E[(K_y+J+a)^{-q}]
&=(y+a)^{-q}
-q(y+a)^{-q-1}\E[K_y+J-y]+\frac{q(q+1)}2(y+a)^{-q-2}\E[(K_y+J-y)^2]\\
&-\frac{q(q+1)(q+2)}2\E\Big[ (K_y+J-y)^3
\int_0^1(1-u)^2
\bigl(y+a+u(K_y+J-y)\bigr)^{-q-3}\rmd u\Big].
\end{align*}
Since $\E[K_y]=y+1$, $\operatorname{Var}(K_y)=2(y+1)$ and $\E[J]=D$, we have
$$
\E[K_y+J-y]=D+1,
\quad
\E[(K_y+J-y)^2]=2y+O(1).
$$
Moreover, $K_y$ is a sum of $y+1$ independent geometric random variables of mean $1$. We have 
$$
\E[(K_y-y-1)^4]=O(y^2).
$$
Using $0\le J\le M$ and H\"older's inequality, we obtain
$$
\E[|K_y+J-y|^3]=O(y^{3/2}).
$$
To bound the remainder term, we first note that on $\{K_y\ge y/2\}$, every point between $y+a$ and $K_y+J+a$ is at least $y/2+a$, and thus
$$\int_0^1(1-u)^2
\bigl(y+a+u(K_y+J-y)\bigr)^{-q-3}\rmd u\le Cy^{-q-3}$$
On $\{K_y<y/2\}$, this remainder is bounded by a constant, since $K_y+J+a\ge a\ge1$ and $|K_y+J-y|\le y+M$. Using the fact that
$$
\P(K_y<y/2)
\le 2^{y/2}\E[2^{-K_y}]
=\frac23\left(\frac{2\sqrt2}{3}\right)^y,
$$
we  thus obtain $$\E\Big[ (K_y+J-y)^3
\int_0^1(1-u)^2
\bigl(y+a+u(K_y+J-y)\bigr)^{-q-3}\rmd u\Big]=O(y^{-q-3/2}).$$
Therefore, 
\begin{align*}
\E[(K_y+J+a)^{-q}]
&=(y+a)^{-q}
-q(D+1)(y+a)^{-q-1}+\frac{q(q+1)}2(y+a)^{-q-2}\bigl(2y+O(1)\bigr)
+O(y^{-q-3/2})\\
&=(y+a)^{-q}
\left(1+\frac{q(q-D)}{y}+O(y^{-3/2})\right).
\end{align*}
Since $q(q-D)<0$, we conclude that
$\E[(K_y+J+a)^{-q}]\le(y+a)^{-q}$
for all sufficiently large $y$. This completes the proof of the lemma.

\end{proof}

\begin{lemma}\label{lem:Gamma}
For $a,b\in\C$ with $|\operatorname{Re}(a)|\le\alpha<\infty$, $|\operatorname{Re}(b)|\le\alpha$, $|\operatorname{Im}(a)|\le1$, $|\operatorname{Im}(b)|\le1$ and $\operatorname{Re}(a-b)\le0$, we have
$$
\frac{\Gamma(n+a)}{\Gamma(n+b)}
=n^{a-b}\left(1+O(1/n)\right)
\quad\text{as }n\rightarrow\infty,
$$
where the error term is uniform for $a,b$.
\end{lemma}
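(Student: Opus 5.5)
We prove Lemma~\ref{lem:Gamma} from Stirling's formula for $\log\Gamma$, which holds uniformly on the sector $|\arg z|\le\pi-\epsilon$:
$$\log\Gamma(z)=\Bigl(z-\tfrac12\Bigr)\log z-z+\tfrac12\log(2\pi)+O(|z|^{-1}),\qquad |z|\to\infty .$$
Take $n\ge 2\alpha+2$. Then $n+a$ and $n+b$ have real part at least $n-\alpha\ge n/2$ and imaginary part bounded by $1$. So both lie in the sector $|\arg z|\le\pi/4$, with modulus comparable to $n$, uniformly over the admissible $a,b$. Hence the $O(|z|^{-1})$ remainders are $O(1/n)$ uniformly, and
$$\log\frac{\Gamma(n+a)}{\Gamma(n+b)}=\Bigl(n+a-\tfrac12\Bigr)\log(n+a)-\Bigl(n+b-\tfrac12\Bigr)\log(n+b)-(a-b)+O(1/n).$$

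The next step is to expand $\log(n+c)=\log n+c/n+O(|c|^2/n^2)$ for $c\in\{a,b\}$. This uses the principal branch; since $|c|\le\alpha+1$, the constants are uniform. Then
$$\Bigl(n+c-\tfrac12\Bigr)\log(n+c)=\Bigl(n+c-\tfrac12\Bigr)\log n+c+O(1/n).$$
Subtracting the two instances, the terms $c$ cancel against $-(a-b)$, leaving $(a-b)\log n+O(1/n)$. Exponentiating gives $\Gamma(n+a)/\Gamma(n+b)=n^{a-b}e^{O(1/n)}=n^{a-b}(1+O(1/n))$, with $n^{a-b}:=e^{(a-b)\log n}$. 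The condition $\operatorname{Re}(a-b)\le0$ only serves to keep $|n^{a-b}|\le1$, which is harmless here.

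The only real obstacle is the uniformity of Stirling's remainder in the complex parameter. I would cite the standard uniform form of Stirling's series on closed sectors (e.g.\ via Binet's integral representation of the remainder, which is bounded by $C_\epsilon/|z|$ for $|\arg z|\le\pi-\epsilon$). Our parameters stay in a compact set, so the shifted arguments remain in such a sector with $|z|\asymp n$, and this gives the uniformity. Alternatively, for real $a,b$ one could use the product $\prod_{i}(i+a)/(i+b)$ and Euler--Maclaurin, but the Stirling route handles the complex case used in Lemma~\ref{lem:G.asym} directly.
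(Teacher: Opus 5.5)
Your proposal is correct and takes the same route as the paper. You apply Stirling's formula for $\log\Gamma$, expand $\log(n+c)=\log n+c/n+O(n^{-2})$ so that the linear terms cancel, and exponentiate $(a-b)\log n+O(1/n)$; your explicit check that $n+a$ and $n+b$ lie in a fixed sector with modulus comparable to $n$ also justifies the uniformity of the Stirling remainder, which the paper uses without comment.
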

\begin{proof}
We use the following version of Stirling's formula:
$$
\log\Gamma(z)
=\left(z-\frac12\right)\log z-z+\frac12\log(2\pi)+O(1/z).
$$
Here $\log z$ and $\log\Gamma(z)$ are chosen analytic on $\operatorname{Re}(z)>0$ and real for $z>0$.
All error terms below are uniform in $a,b$. We have
\begin{align*}
\log\Gamma(n+a)
&=\left(n+a-\frac12\right)\log(n+a)-(n+a)
+\frac12\log(2\pi)+O(1/n)\\
&=\left(n+a-\frac12\right)
\left(\log n+\frac{a}{n}+O(n^{-2})\right)
-(n+a)+\frac12\log(2\pi)+O(1/n)\\
&=\left(n+a-\frac12\right)\log n-n
+\frac12\log(2\pi)+O(1/n),
\end{align*}
and similarly
$$
\log\Gamma(n+b)
=\left(n+b-\frac12\right)\log n-n
+\frac12\log(2\pi)+O(1/n).
$$
Subtracting, we obtain
$$
\log\Gamma(n+a)-\log\Gamma(n+b)
=(a-b)\log n+O(1/n).
$$
Exponentiating both sides, we conclude that
$$
\frac{\Gamma(n+a)}{\Gamma(n+b)}
=n^{a-b}e^{O(1/n)}
=n^{a-b}\left(1+O(1/n)\right).
$$
\end{proof}

\bibliography{refs}

@article{Kozma2003,
  title={Excited random walk in three dimensions has positive speed},
  author={Kozma, Gady},
  journal={arXiv preprint math/0310305},
  year={2003}
}

@article{Kozma2005,
  title={Excited random walk in two dimensions has linear speed},
  author={Kozma, Gady},
  journal={arXiv preprint math/0512535},
  year={2005}
}

@article {Pham2015,
    AUTHOR = {Pham, Cong-Dan},
     TITLE = {Monotonicity and regularity of the speed for excited random
              walks in higher dimensions},
   JOURNAL = {Electron. J. Probab.},
  FJOURNAL = {Electronic Journal of Probability},
    VOLUME = {20},
      YEAR = {2015},
     PAGES = {no. 72, 25},
      ISSN = {1083-6489},
   MRCLASS = {60K37},
  MRNUMBER = {3361260},
MRREVIEWER = {Dimitri\ Petritis},
       DOI = {10.1214/EJP.v20-3522},
       URL = {https://doi.org/10.1214/EJP.v20-3522},
}

@article{BNSP2026,
    title = {Limit shapes for centrally excited random walks},
    author = {Bou-Rabee, Ahmed and Nguyen, Tuan-Minh and Peres, Yuval and Singh, Arvind},
    journal={arXiv preprint arXiv:2610.10162v2},
    year={2026}
}

@article{N2026b,
  title={Shape theorem for excited random walks on the integer lattice},
  author={Nguyen, Tuan-Minh},
  journal={arXiv preprint arXiv:2610.07724},
  year={2026}
}

@article{N2026,
  title={``{T}rue" self-avoiding walks on general trees},
  author={Nguyen, Tuan-Minh},
  journal={arXiv preprint arXiv:2604.24389},
  year={2026}
}

@article{Campos2021, 
   AUTHOR = {Campos, Daniel and Crist\'in, Javier and M\'endez, Vicen\c c},
     TITLE = {Optimal escape-and-feeding dynamics of random walkers:
              rethinking the convenience of ballistic strategies},
   JOURNAL = {Phys. Rev. E},
  FJOURNAL = {Physical Review E},
    VOLUME = {103},
      YEAR = {2021},
    NUMBER = {5},
     PAGES = {Paper No. 052109, 8},
      ISSN = {2470-0045,2470-0053},
   MRCLASS = {60G50},
  MRNUMBER = {4277648},
       DOI = {10.1103/physreve.103.052109},
       URL = {https://doi.org/10.1103/physreve.103.052109},
}

@article{BenichouRedner2014, 
author = {B{\'e}nichou, Olivier and Redner, Sidney}, 
title = {Depletion-Controlled Starvation of a Diffusing Forager}, 
journal = {Physical Review Letters}, 
volume = {113}, 
pages = {238101}, 
year = {2014}, 
doi = {10.1103/PhysRevLett.113.238101}, 
url = {https://doi.org/10.1103/PhysRevLett.113.238101} }

@article{Bhat2017, 
    AUTHOR = {Bhat, U. and Redner, S. and B\'enichou, O.},
     TITLE = {Starvation dynamics of a greedy forager},
   JOURNAL = {J. Stat. Mech. Theory Exp.},
  FJOURNAL = {Journal of Statistical Mechanics: Theory and Experiment},
      YEAR = {2017},
    NUMBER = {7},
    volume = {2017},
     PAGES = {073213},
      ISSN = {1742-5468},
   MRCLASS = {82C41},
  MRNUMBER = {3683815},
       DOI = {10.1088/1742-5468/aa7dfc},
       URL = {https://doi.org/10.1088/1742-5468/aa7dfc},
}

@article{Bracis2015, 
author = {Bracis, Chloe and Gurarie, Eliezer and Van Moorter, Bram and Goodwin, R. Andrew}, 
title = {Memory Effects on Movement Behavior in Animal Foraging}, 
journal = {PLOS ONE}, 
volume = {10}, 
number = {8}, 
pages = {e0136057}, 
year = {2015}, 
doi = {10.1371/journal.pone.0136057}, 
url = {https://doi.org/10.1371/journal.pone.0136057} }

@article{VanMoorter2009, 
author = {Van Moorter, Bram and Visscher, Darcy and Benhamou, Simon and B{\"o}rger, Luca and Boyce, Mark S. and Gaillard, Jean-Michel}, 
title = {Memory keeps you at home: a mechanistic model for home range emergence}, 
journal = {Oikos}, 
volume = {118}, 
pages = {641--652}, 
year = {2009}, 
doi = {10.1111/j.1600-0706.2008.17003.x}, 
url = {https://doi.org/10.1111/j.1600-0706.2008.17003.x} }

@article{Fronhofer2013, 
author = {Fronhofer, Emanuel A. and Hovestadt, Thomas and Poethke, Hans-Joachim},
title = {From random walks to informed movement},
journal = {Oikos},
volume = {122},
number = {6},
pages = {857-866},
doi = {10.1111/j.1600-0706.2012.21021.x},
url = {https://doi.org/10.1111/j.1600-0706.2012.21021.x},
year = {2013}
}

@article{GetzSaltz2008, author = {Getz, Wayne M. and Saltz, David}, title = {A framework for generating and analyzing movement paths on ecological landscapes}, journal = {Proceedings of the National Academy of Sciences}, volume = {105}, number = {49}, pages = {19066--19071}, year = {2008}, doi = {10.1073/pnas.0801732105}, url = {https://doi.org/10.1073/pnas.0801732105} }

@article{BartumeusLevin2008, author = {Bartumeus, Frederic and Levin, Simon A.}, title = {Fractal reorientation clocks: Linking animal behavior to statistical patterns of search}, journal = {Proceedings of the National Academy of Sciences}, volume = {105}, number = {49}, pages = {19072--19077}, year = {2008}, doi = {10.1073/pnas.0801926105}, url = {https://doi.org/10.1073/pnas.0801926105} }

@article {BW2003,
    AUTHOR = {Benjamini, Itai and Wilson, David B.},
     TITLE = {Excited random walk},
   JOURNAL = {Electron. Comm. Probab.},
  FJOURNAL = {Electronic Communications in Probability},
    VOLUME = {8},
      YEAR = {2003},
     PAGES = {86--92},
      ISSN = {1083-589X},
   MRCLASS = {60G50 (60K37)},
  MRNUMBER = {1987097},
MRREVIEWER = {Michael Voit},
       DOI = {10.1214/ECP.v8-1072},
       URL = {https://doi.org/10.1214/ECP.v8-1072},
}

@article {Z2005,
    AUTHOR = {Zerner, Martin P. W.},
     TITLE = {Multi-excited random walks on integers},
   JOURNAL = {Probab. Theory Related Fields},
  FJOURNAL = {Probability Theory and Related Fields},
    VOLUME = {133},
      YEAR = {2005},
    NUMBER = {1},
     PAGES = {98--122},
      ISSN = {0178-8051},
   MRCLASS = {60K35 (60J10 60K37)},
  MRNUMBER = {2197139},
MRREVIEWER = {Serguei Popov},
       DOI = {10.1007/s00440-004-0417-0},
       URL = {https://doi.org/10.1007/s00440-004-0417-0},
}

@article {CKS2020,
    AUTHOR = {Collevecchio, Andrea and Kious, Daniel and Sidoravicius,
              Vladas},
     TITLE = {The branching-ruin number and the critical parameter of
              once-reinforced random walk on trees},
   JOURNAL = {Comm. Pure Appl. Math.},
  FJOURNAL = {Communications on Pure and Applied Mathematics},
    VOLUME = {73},
      YEAR = {2020},
    NUMBER = {1},
     PAGES = {210--236},
      ISSN = {0010-3640},
   MRCLASS = {60K35 (05C81)},
  MRNUMBER = {4033893},
       DOI = {10.1002/cpa.21860},
       URL = {https://doi.org/10.1002/cpa.21860},
}

@article {DK2012,
    AUTHOR = {Dolgopyat, Dmitry and Kosygina, Elena},
     TITLE = {Scaling limits of recurrent excited random walks on integers},
   JOURNAL = {Electron. Commun. Probab.},
  FJOURNAL = {Electronic Communications in Probability},
    VOLUME = {17},
      YEAR = {2012},
     PAGES = {no. 35, 14},
   MRCLASS = {60K37 (60F17 60G50)},
  MRNUMBER = {2965748},
MRREVIEWER = {Bruno Schapira},
       DOI = {10.1214/ECP.v17-2213},
       URL = {https://doi.org/10.1214/ECP.v17-2213},
}

@article {KM2011,
    AUTHOR = {Kosygina, Elena and Mountford, Thomas},
     TITLE = {Limit laws of transient excited random walks on integers},
   JOURNAL = {Ann. Inst. Henri Poincar\'{e} Probab. Stat.},
  FJOURNAL = {Annales de l'Institut Henri Poincar\'{e} Probabilit\'{e}s et
              Statistiques},
    VOLUME = {47},
      YEAR = {2011},
    NUMBER = {2},
     PAGES = {575--600},
      ISSN = {0246-0203},
   MRCLASS = {60K37 (60F05 60J60 60J80)},
  MRNUMBER = {2814424},
MRREVIEWER = {Jean B\'{e}rard},
       DOI = {10.1214/10-AIHP376},
       URL = {https://doi.org/10.1214/10-AIHP376},
}

@article {P2012,
    AUTHOR = {Peterson, Jonathon},
     TITLE = {Large deviations and slowdown asymptotics for one-dimensional
              excited random walks},
   JOURNAL = {Electron. J. Probab.},
  FJOURNAL = {Electronic Journal of Probability},
    VOLUME = {17},
      YEAR = {2012},
     PAGES = {no. 48, 24},
   MRCLASS = {60F10 (60K37)},
  MRNUMBER = {2946155},
MRREVIEWER = {Ofer Zeitouni},
       DOI = {10.1214/EJP.v17-1726},
       URL = {https://doi.org/10.1214/EJP.v17-1726},
}

@article {DP2017,
AUTHOR = {Davis, B. and Peterson, J.},
TITLE = {Excited random walks with non-nearest neighbor steps},
JOURNAL = {J. Theoret. Probab.},
FJOURNAL = {Journal of Theoretical Probability},
VOLUME = {30},
YEAR = {2017},
NUMBER = {4},
PAGES = {1255--1284},
ISSN = {0894-9840},
MRCLASS = {60K35 (60G50 60K37)},
MRNUMBER = {3736173},
MRREVIEWER = {Bruno Schapira},
DOI = {10.1007/s10959-016-0697-1},
URL = {https://doi.org/10.1007/s10959-016-0697-1},
}

@article {KP2017,
    AUTHOR = {Kosygina, E. and Peterson, J.},
     TITLE = {Excited random walks with {M}arkovian cookie stacks},
   JOURNAL = {Ann. Inst. Henri Poincar\'{e} Probab. Stat.},
  FJOURNAL = {Annales de l'Institut Henri Poincar\'{e} Probabilit\'{e}s et
              Statistiques},
    VOLUME = {53},
      YEAR = {2017},
    NUMBER = {3},
     PAGES = {1458--1497},
      ISSN = {0246-0203},
   MRCLASS = {60K37 (60F05 60J10 60K35)},
  MRNUMBER = {3689974},
MRREVIEWER = {Bruno Schapira},
       DOI = {10.1214/16-AIHP761},
       URL = {https://doi.org/10.1214/16-AIHP761},
}

@article {KMP2022,
    AUTHOR = {Kosygina, E. and Mountford, T. and Peterson, J.},
     TITLE = {Convergence of random walks with {M}arkovian cookie stacks to
              {B}rownian motion perturbed at extrema},
   JOURNAL = {Probab. Theory Related Fields},
  FJOURNAL = {Probability Theory and Related Fields},
    VOLUME = {182},
      YEAR = {2022},
    NUMBER = {1-2},
     PAGES = {189--275},
      ISSN = {0178-8051},
   MRCLASS = {60K35 (60F17 60J55)},
  MRNUMBER = {4367948},
       DOI = {10.1007/s00440-021-01055-3},
       URL = {https://doi.org/10.1007/s00440-021-01055-3},
}

@article {Hofstad2010,
    AUTHOR = {van der Hofstad, R. and Holmes, M.},
     TITLE = {Monotonicity for excited random walk in high dimensions},
   JOURNAL = {Probab. Theory Related Fields},
  FJOURNAL = {Probability Theory and Related Fields},
    VOLUME = {147},
      YEAR = {2010},
    NUMBER = {1-2},
     PAGES = {333--348},
      ISSN = {0178-8051},
   MRCLASS = {60K35 (60G50 60K37)},
  MRNUMBER = {2594356},
MRREVIEWER = {Daniel Remenik},
       DOI = {10.1007/s00440-009-0215-9},
       URL = {https://doi.org/10.1007/s00440-009-0215-9},
}

@article {CHK2019,
    AUTHOR = {Collevecchio, Andrea and Huynh, Cong Bang and Kious, Daniel},
     TITLE = {The branching-ruin number as critical parameter of random
              processes on trees},
   JOURNAL = {Electron. J. Probab.},
  FJOURNAL = {Electronic Journal of Probability},
    VOLUME = {24},
      YEAR = {2019},
     PAGES = {Paper No. 121, 29},
      ISSN = {1083-6489},
   MRCLASS = {60K35 (60K37 82D30)},
  MRNUMBER = {4029424},
MRREVIEWER = {Daniel\ Boivin},
       DOI = {10.1214/19-ejp383},
       URL = {https://doi.org/10.1214/19-ejp383},
}

@article {Lyons1989,
    AUTHOR = {Lyons, Russell},
     TITLE = {The {I}sing model and percolation on trees and tree-like
              graphs},
   JOURNAL = {Comm. Math. Phys.},
  FJOURNAL = {Communications in Mathematical Physics},
    VOLUME = {125},
      YEAR = {1989},
    NUMBER = {2},
     PAGES = {337--353},
      ISSN = {0010-3616,1432-0916},
   MRCLASS = {82A43 (60K35 82A05 82A68)},
  MRNUMBER = {1016874},
MRREVIEWER = {R.\ T.\ Smythe},
       URL = {http://projecteuclid.org/euclid.cmp/1104179469},
}

@article {ABK2008,
    AUTHOR = {Amir, Gideon and Benjamini, Itai and Kozma, Gady},
     TITLE = {Excited random walk against a wall},
   JOURNAL = {Probab. Theory Related Fields},
  FJOURNAL = {Probability Theory and Related Fields},
    VOLUME = {140},
      YEAR = {2008},
    NUMBER = {1-2},
     PAGES = {83--102},
      ISSN = {0178-8051,1432-2064},
   MRCLASS = {60G50},
  MRNUMBER = {2357671},
MRREVIEWER = {Serguei\ Popov},
       DOI = {10.1007/s00440-007-0058-1},
       URL = {https://doi.org/10.1007/s00440-007-0058-1},
}

@article {AHR2023,
    AUTHOR = {Angel, Omer and Holmes, Mark and Ramirez, Alejandro},
     TITLE = {Balanced excited random walk in two dimensions},
   JOURNAL = {Ann. Probab.},
  FJOURNAL = {The Annals of Probability},
    VOLUME = {51},
      YEAR = {2023},
    NUMBER = {4},
     PAGES = {1421--1448},
      ISSN = {0091-1798,2168-894X},
   MRCLASS = {60K35 (60G42)},
  MRNUMBER = {4597323},
MRREVIEWER = {Andrew\ R.\ Wade},
       DOI = {10.1214/23-aop1622},
       URL = {https://doi.org/10.1214/23-aop1622},
}

@article {Z2006,
    AUTHOR = {Zerner, Martin P. W.},
     TITLE = {Recurrence and transience of excited random walks on {$\mathbb{Z}^d$} and strips},
   JOURNAL = {Electron. Comm. Probab.},
  FJOURNAL = {Electronic Communications in Probability},
    VOLUME = {11},
      YEAR = {2006},
     PAGES = {118--128},
      ISSN = {1083-589X},
   MRCLASS = {60K35 (60G50 60K37)},
  MRNUMBER = {2231739},
MRREVIEWER = {Marina\ Vachkovskaia},
       DOI = {10.1214/ECP.v11-1200},
       URL = {https://doi.org/10.1214/ECP.v11-1200},
}

@article {BS2009,
    AUTHOR = {Basdevant, Anne-Laure and Singh, Arvind},
     TITLE = {Recurrence and transience of a multi-excited random walk on a
              regular tree},
   JOURNAL = {Electron. J. Probab.},
  FJOURNAL = {Electronic Journal of Probability},
    VOLUME = {14},
      YEAR = {2009},
     PAGES = {no. 55, 1628--1669},
      ISSN = {1083-6489},
   MRCLASS = {60K35 (60F05 60F15 60F20 60G50 60J80)},
  MRNUMBER = {2525106},
MRREVIEWER = {Da-Quan\ Jiang},
       DOI = {10.1214/EJP.v14-672},
       URL = {https://doi.org/10.1214/EJP.v14-672},
}

@book {LP2016,
    AUTHOR = {Lyons, Russell and Peres, Yuval},
     TITLE = {Probability on trees and networks},
    SERIES = {Cambridge Series in Statistical and Probabilistic Mathematics},
    VOLUME = {42},
 PUBLISHER = {Cambridge University Press, New York},
      YEAR = {2016},
     PAGES = {xv+699},
      ISBN = {978-1-107-16015-6},
   MRCLASS = {60C05 (05C05 05C81 28A80 60J10 60J80 60K35 82B41)},
  MRNUMBER = {3616205},
MRREVIEWER = {Laurent Miclo},
       DOI = {10.1017/9781316672815},
       URL = {https://doi.org/10.1017/9781316672815},
}

@article {V2003,
    AUTHOR = {Volkov, Stanislav},
     TITLE = {Excited random walk on trees},
   JOURNAL = {Electron. J. Probab.},
  FJOURNAL = {Electronic Journal of Probability},
    VOLUME = {8},
      YEAR = {2003},
     PAGES = {no. 23, 15},
      ISSN = {1083-6489},
   MRCLASS = {60J10 (60G50)},
  MRNUMBER = {2041824},
MRREVIEWER = {Da-Quan\ Jiang},
       DOI = {10.1214/EJP.v8-180},
       URL = {https://doi.org/10.1214/EJP.v8-180},
}

@article {BR2007,
    AUTHOR = {B\'erard, Jean and Ram\'irez, Alejandro},
     TITLE = {Central limit theorem for the excited random walk in dimension
              {$D\geq 2$}},
   JOURNAL = {Electron. Comm. Probab.},
  FJOURNAL = {Electronic Communications in Probability},
    VOLUME = {12},
      YEAR = {2007},
     PAGES = {303--314},
      ISSN = {1083-589X},
   MRCLASS = {60K35 (60F05 60G50)},
  MRNUMBER = {2342709},
MRREVIEWER = {Martin\ V.\ Hildebrand},
       DOI = {10.1214/ECP.v12-1317},
       URL = {https://doi.org/10.1214/ECP.v12-1317},
}

@article {Menshikov2012,
    AUTHOR = {Menshikov, M. and Popov, S. and Ram\'{\i}rez, A.
              F. and Vachkovskaia, M.},
     TITLE = {On a general many-dimensional excited random walk},
   JOURNAL = {Ann. Probab.},
  FJOURNAL = {The Annals of Probability},
    VOLUME = {40},
      YEAR = {2012},
    NUMBER = {5},
     PAGES = {2106--2130},
      ISSN = {0091-1798},
   MRCLASS = {60G50 (60K37)},
  MRNUMBER = {3025712},
MRREVIEWER = {David A. Croydon},
       DOI = {10.1214/11-AOP678},
       URL = {https://doi.org/10.1214/11-AOP678},
}

@article {N2022,
    AUTHOR = {Nguyen, Tuan-Minh},
     TITLE = {Speed of excited random walks with long backward steps},
   JOURNAL = {J. Stat. Phys.},
  FJOURNAL = {Journal of Statistical Physics},
    VOLUME = {188},
      YEAR = {2022},
    NUMBER = {1},
     PAGES = {Paper No. 3, 29},
      ISSN = {0022-4715,1572-9613},
   MRCLASS = {60K35 (60J80 60J85 82C41)},
  MRNUMBER = {4419336},
MRREVIEWER = {V.\ Shcherbakov},
       DOI = {10.1007/s10955-022-02926-2},
       URL = {https://doi.org/10.1007/s10955-022-02926-2},
}

@article{CHN2021,
 AUTHOR = {Collevecchio, A. and Hamza, K. and Nguyen, T.-M.},
     TITLE = {Long range one-cookie random walk with positive speed},
   JOURNAL = {Stochastic Process. Appl.},
  FJOURNAL = {Stochastic Processes and their Applications},
    VOLUME = {142},
      YEAR = {2021},
     PAGES = {462--478},
      ISSN = {0304-4149},
   MRCLASS = {60K35 (60F15 60K37)},
  MRNUMBER = {4321331},
       DOI = {10.1016/j.spa.2021.08.003},
       URL = {https://doi.org/10.1016/j.spa.2021.08.003},
}

@article {KZ2013,
    AUTHOR = {Kosygina, Elena and Zerner, Martin P. W.},
     TITLE = {Excited random walks: results, methods, open problems},
   JOURNAL = {Bull. Inst. Math. Acad. Sin. (N.S.)},
  FJOURNAL = {Bulletin of the Institute of Mathematics. Academia Sinica. New
              Series},
    VOLUME = {8},
      YEAR = {2013},
    NUMBER = {1},
     PAGES = {105--157},
      ISSN = {2304-7909},
   MRCLASS = {60K35 (60J80 60K37)},
  MRNUMBER = {3097419},
MRREVIEWER = {Andrew R. Wade},
}

@article {ABO,
    AUTHOR = {Amir, Gideon and Berger, Noam and Orenshtein, Tal},
     TITLE = {Zero-one law for directional transience of one dimensional
              excited random walks},
   JOURNAL = {Ann. Inst. Henri Poincar\'{e} Probab. Stat.},
  FJOURNAL = {Annales de l'Institut Henri Poincar\'{e} Probabilit\'{e}s et
              Statistiques},
    VOLUME = {52},
      YEAR = {2016},
    NUMBER = {1},
     PAGES = {47--57},
      ISSN = {0246-0203},
   MRCLASS = {82B41 (60K35 60K37)},
  MRNUMBER = {3449293},
MRREVIEWER = {Elena A. Zhizhina},
       DOI = {10.1214/14-AIHP615},
       URL = {https://doi.org/10.1214/14-AIHP615},
}

@article {BasS1,
    AUTHOR = {Basdevant, Anne-Laure and Singh, Arvind},
     TITLE = {On the speed of a cookie random walk},
   JOURNAL = {Probab. Theory Related Fields},
  FJOURNAL = {Probability Theory and Related Fields},
    VOLUME = {141},
      YEAR = {2008},
    NUMBER = {3-4},
     PAGES = {625--645},
      ISSN = {0178-8051},
   MRCLASS = {60K35 (60F15 60J80)},
  MRNUMBER = {2391167},
MRREVIEWER = {David A. Croydon},
       DOI = {10.1007/s00440-007-0096-8},
       URL = {https://doi.org/10.1007/s00440-007-0096-8},
}

@article {BasS2,
    AUTHOR = {Basdevant, Anne-Laure and Singh, Arvind},
     TITLE = {Rate of growth of a transient cookie random walk},
   JOURNAL = {Electron. J. Probab.},
  FJOURNAL = {Electronic Journal of Probability},
    VOLUME = {13},
      YEAR = {2008},
     PAGES = {no. 26, 811--851},
   MRCLASS = {60K35 (60F05 60J80)},
  MRNUMBER = {2399297},
MRREVIEWER = {Jean B\'{e}rard},
       DOI = {10.1214/EJP.v13-498},
       URL = {https://doi.org/10.1214/EJP.v13-498},
}

@article{LN2026,
  title={Once-excited random walks on general trees},
  author={Le, Duy-Bao and Nguyen, Tuan-Minh},
  journal={arXiv preprint arXiv:2602.16934},
  year={2026}
}

@book{BGT1987,
  author    = {N. H. Bingham and C. M. Goldie and J. L. Teugels},
  title     = {Regular Variation},
  publisher = {Cambridge University Press},
  year      = {1987}
}
\bibliographystyle{alpha}
\end{document}